\documentclass[11pt,reqno]{amsart}
\usepackage{url}
\usepackage{bbm}
\usepackage{dsfont}
\usepackage[T1]{fontenc}
\usepackage{enumitem}
\usepackage{hyperref}
\hypersetup{
colorlinks   = true,
citecolor    = blue,
linkcolor    = blue
}

\usepackage[numbers,sort&compress]{natbib}

\usepackage{amsmath,amsfonts,amsthm,amssymb,color,tikz, comment, xcolor}
\usepackage{bm}
\usepackage{mathrsfs}   
\usepackage[normalem]{ulem}
\usepackage{xfrac}
\usepackage{pdfsync}
\usepackage[font={scriptsize}]{caption}
\usepackage{environ}

\usepackage[left=1in, right=1in, top=1.1in,bottom=1.1in]{geometry}
\setlength{\parskip}{3.5pt}



\newcommand{\der}{\delta}

\usepackage{accents}


\newcommand{\E}{\mathbb E}
\newcommand{\R}{\mathbb R}

\newcommand{\PP}{\mathbb P}



\newcommand{\cf}{\mathcal F}
\newcommand{\cg}{\mathcal G}

\newcommand{\cl}{\mathcal L}
\newcommand{\cm}{\mathcal M}

\newcommand{\cp}{\mathcal P}

\newcommand{\cs}{\mathcal S}

\newcommand{\al}{\alpha}

\newcommand{\ep}{\varepsilon}

\newcommand{\la}{\lambda}

\newcommand{\si}{\sigma}

\newcommand{\lp}{\left(}
\newcommand{\rp}{\right)}
\newcommand{\lc}{\left[}
\newcommand{\rc}{\right]}

\newcommand{\lln}{\left|}
\newcommand{\rrn}{\right|}
\newcommand{\lla}{\left\langle}
\newcommand{\rra}{\right\rangle}

\newtheorem{theorem}{Theorem}[section]

\newtheorem{corollary}[theorem]{Corollary}

\newtheorem{definition}[theorem]{Definition}

\newtheorem{hypothesis}[theorem]{Hypothesis}
\newtheorem{lemma}[theorem]{Lemma}
\newtheorem{notation}[theorem]{Notation}

\newtheorem{proposition}[theorem]{Proposition}

\theoremstyle{remark}
\newtheorem{remark}[theorem]{Remark}

\theoremstyle{remark}

\numberwithin{equation}{section}

\newcommand{\bean}{\begin{eqnarray*}}
\newcommand{\eean}{\end{eqnarray*}}
\newcommand{\ben}{\begin{enumerate}}
\newcommand{\een}{\end{enumerate}}
\newcommand{\beq}{\begin{equation}}
\newcommand{\eeq}{\end{equation}}


\title[Regime Switching Mean Field Game and Convergence]{$N$-Player Stochastic Differential Games with Regime Switching and Mean Field Convergence}



\author[MINGRUI WANG]{MINGRUI WANG}
\address{(M. Wang)Harold and Inge Marcus Department of Industrial and Manufacturing
Engineering, The Pennsylvania State University, University Park, PA 16802 United States\\
}
\email{mvw5822@psu.edu}

\author[PRAKASH CHAKRABORTY]{PRAKASH CHAKRABORTY}
\address{(P. Chakraborty) Harold and Inge Marcus Department of Industrial and Manufacturing
Engineering, The Pennsylvania State University, University Park, PA 16802 United States\\
}
\email{prakashc@psu.edu}

\date{\today}
\thanks{P. Chakraborty is partially supported by the National Science Foundation under grant DMS-2153915}

\begin{document}
\maketitle
\begin{sloppypar}
\begin{abstract}
	In this study, we investigate $N$-player stochastic differential games with regime switching, where the player dynamics are modulated by a finite-state Markov chain. We analyze the associated Nash system, which consists of a system of coupled nonlinear partial differential equations, and establish the existence and uniqueness of solutions to this system, thereby proving the existence of a unique Nash equilibrium. Additionally, we examine the mean field game problem under the same regime-switching framework. We derive a connection between the Nash equilibrium of the MFG and a forward-backward stochastic differential equation with jumps, and demonstrate the unique solvability of this equation. Finally, we explore the propagation of chaos and show that the optimal control obtained from the MFG serves as an approximate Nash equilibrium for the $N$-player problem.
\end{abstract}

\section{Introduction}
Stochastic differential games are powerful tools for analyzing the uncertainty and dynamic strategic interactions among rational agents in a wide range of fields including engineering, economics, finance and biology. In particular, the concept of Nash equilibria has played a central role in understanding how self-interested agents make decisions in a competitive environment. 
This paper investigates multi-player finite horizon stochastic differential games, specifically those with regime-switching dynamics. Within this intricate framework, we establish the existence and uniqueness of Nash equilibria under mild conditions. Our existence and uniqueness results carry forward to symmetric multi-player games where complex player interactions are replaced by mean field approximation, that is when every player dynamics are influenced by, and in turn influence, an aggregate distribution of the states of all players. Finally we explore the regime switching mean field game (MFG) problem where the population of players is allowed to go to infinity. Here a representative player follows a McKean-Vlasov SDE influenced by its conditional law, given the regime switches. We perform a probabilistic study of the associated forward-backward stochastic differential equation (FBSDE), with jumps thanks to regime switching, after motivating the corresponding stochastic maximum principle. Finally we derive convergence and propagation of chaos results which show strategies derived from the limiting problem provide approximate Nash equilibrium strategies to the finite multi-player game.

Multi-player stochastic differential games has been studied in prior works including \cite{friedman-1972,  bensoussan2000stochastic, borkar1992stochastic} both in the finite and infinite horizon setting. These kinds of games are related to classical stochastic control problems in the sense that the latter can be regarded as one player stochastic differential games. Classical stochastic control problems can be studied through an associated Hamilton-Jacobi-Bellman (HJB) PDE, whereas multi-player stochastic differential games exhibit a system of coupled HJB equations. The solutions to such equations are considered in the classical sense in works including \cite{friedman-1972, bensoussan2000stochastic}, and which will be our focus in this paper as well. 

Stochastic control and differential games featuring regime switching dynamics \cite{ lqg-gomez-duncan, mfc-regime} have garnered attention in applications ranging from finance and economics \cite{savku-weber-22, elliott2011stochastic} to engineering \cite{lim1998stochastic, li2017modeling} and climate modeling \cite{elias2014}. Such models capture complex dynamics in which underlying parameters, e.g. drift, volatility, or payoff functions, change according to an exogenous Markov chain, often called a \emph{regime}. This introduces an additional layer of complexity, as agents must adapt their strategies not only to the actions of other agents but also to the evolving state of the system. This dynamical aspect is particularly relevant in various real-world applications, including financial markets, energy management, and epidemiology \cite{reg-switching-ex1, reg-switching-ex3, reg-switching-ex4, bierbrauer2004modeling}. 

MFGs have emerged as a powerful framework to analyze stochastic differential games in which a large number of players interact in a symmetric manner. Introduced and popularized in \cite{Lasry-Lions, Huang-Malhame-Caines}, MFGs have since grown into a vibrant field at the intersection of stochastic analysis, partial differential equations, and game theory \cite{bensoussan2013mean, ref2, ref3} with applications in engineering, science, finance and economics \cite{carmona-applications, djehiche2016mean}. The main insight behind MFGs is that, in the limit as the number of players goes to infinity, one can characterize a Nash equilibrium by analyzing a \emph{representative player} whose state dynamics and cost function depend on the distribution of all the other players’ states. This perspective transforms a high-dimensional problem into one that couples a single-agent stochastic optimal control problem with a consistency condition on the law of the representative player.

Given the success of mean field game theory in handling large-scale interactions, it is natural to incorporate regime-switching features into MFGs and ask whether the same unifying viewpoint continues to yield tractable analysis. Indeed, several works have devoted their attention to this setting. In \cite{bensoussan2020mfg-regime-jump} a regime switching jump diffusion is considered for the state evolution, however the regime switches appear only through jump terms. In the Linear-Quadratic framework, regime switching mean field games have been explored in \cite{jian2024convergence, lv-xiong-2024linear}. While one typically treats the idiosyncratic noise of each player via standard diffusion processes, the common noise in MFGs is often modeled by a Brownian motion shared among all agents \cite{carmona2016mean, ref3}. In regime switching mean field games, we replace that Brownian motion by a finite-state Markov chain that switches randomly among states. However, introducing regime-switching common noise brings about additional subtleties. The presence of these discrete jumps changes both the structure of the underlying partial differential equations and the forward-backward stochastic differential equations  associated with the control problems. While there exist classical results on MFGs with diffusion-type common noise \cite{ref3, carmona2016mean}, we provide a comprehensive treatment of regime-switching common noise in the $N$-player game setting, along with a proof of convergence to the MFG limit. We describe further these two important threads of research in our paper:

\noindent
{\bf $N$-player regime-switching stochastic differential games:} We study existence and uniqueness of Nash equilibria in finite interacting network of players who are all subject to a regime-switching common noise. The regimes of the Markov chain appear as additional discrete states in the coefficients of the SDE. We formulate the game by considering a set of coupling PDEs for the values, one for each player and for each of the discrete states of the common Markov chain, augmented by jump terms reflecting transitions between these regimes. Furthermore, we interpret solutions to the relevant HJB equations in the classical sense. Within the context of multi-player stochastic differential games with regime switching dynamics, establishing the existence and uniqueness of Nash equilibrium is a formidable challenge. Such results are available only in specialized scenarios \cite{xiong-impulse-automatica, song-lqg, reg-switching-ex1} or when the agent action spaces are bounded. However, understanding Nash equilibria in general multi-player games with regime switching dynamics and unbounded action spaces has profound implications across various domains. Our first main contribution is a rigorous analysis of these PDEs that establish the existence and uniqueness of the $N$-player Nash equilibrium under suitable regularity conditions. In addition, we extend these results to a mean-field interaction setting in which each player’s dynamics depend on the empirical distribution of all other players’ states. Our work thus provides the following key advances:
\emph{(i) Coupled PDE Analysis:} By formulating the $N$-player regime-switching game through coupled HJB-type PDEs, linked via jump terms reflecting regime transitions, we establish existence and uniqueness for these PDE systems. Switches in the external regime leads to corresponding switches in the cost functionals and dynamics for all players, and our framework accommodates these discrete jumps.
\emph{(ii) Uniqueness of the $N$-player Nash Equilibrium:} We show that the feedback strategies derived from our PDE analysis give rise to a unique Nash equilibrium. While the discrete nature of the regime-switching noise generally complicates uniqueness arguments, we leverage structural properties to prove uniqueness under appropriate conditions. 

\noindent
{\bf Regime-switching mean field game:} Having established existence and uniqueness at the finite level, we then examine the behavior of the system in the limit as $N \to \infty$. \emph{(i) Probabilistic Analysis and the Mean Field Limit:}  In contrast to purely diffusion-driven games, the common noise in our setting is partially described by a finite-state Markov chain. Rather than continuing with a PDE-based approach, we adopt the FBSDE framework of \cite{carmona-delarue-siam}. In doing so, we derive a McKean–Vlasov FBSDE that incorporates regime-switching by treating the finite-state Markov chain as additional discrete states in both the forward SDE (tracking the continuous state and regime) and the backward SDE (capturing the costate and associated jumps). Under appropriate structural conditions, this regime-switching McKean–Vlasov FBSDE system has a unique solution in the strong sense. We achieve this step through available results in \cite{rolon2024markovian}. 
\emph{(ii) Propagation of Chaos and Approximate Nash Equilibria:} Leveraging the regularity properties of our FBSDE system, we establish a propagation of chaos result, demonstrating that the solution of the infinite-population limit approximates the $N$-player system. Specifically, when each player adopts the mean field feedback strategy, one obtains an $\ep_N$-Nash equilibrium in the finite game, with $\ep_N \to 0$ as $N \to \infty$. We further quantify the convergence rate. 

Technically, our paper draws from both PDE and FBSDE treatments of MFGs. On the PDE side, our work bears similarity to that of \cite{ref3} and others who studied MFG systems with different types of coupling. On the FBSDE side, we lean on techniques similar to those in \cite{carmona-delarue-siam, ref3, rolon2024markovian} but adapted to handle the discontinuities introduced by regime switching. This adaptation requires careful treatment of jump processes and new estimates for solutions of the forward and backward equations. Additionally, we verify maximum principle conditions that tie the distribution of the representative player’s state–regime pair to the coefficients of the FBSDE, and then show that these solutions correspond to equilibrium strategies in the infinite-population limit.

\emph{Structure and Organization of the Paper.}
Our paper is organized as follows. In Section~\ref{sec:prob-setup}, we present the setup of the $N$-player regime-switching stochastic differential game with general interaction not-necessary mean-field. We introduce the state dynamics, the Markov chain driving the regime-switching, and the cost functionals for each player.  Then we state the PDE system that characterizes the value functions of the players, and prove the existence and uniqueness of solutions to this coupled PDE system. Our considerations are complicated since we do not assume the action spaces to be bounded to begin with, and our techniques are similar to \cite{ref3} with added considerations for the presence of the regime switching Markov chain. After considering truncation functions to make coefficients of interest bounded and obtaining regularity properties of the solution, we provide the main theorem on the well-posedness of the PDE system in Theorem~\ref{Thm:3.1}. We also discuss the arguments that link these PDEs to the original $N$-player game, establishing that the candidate strategies indeed constitute a unique Markovian Nash equilibrium.
In Section~\ref{sec:N-mfg},  we use the results obtained in Section~\ref{sec:prob-setup} for general interaction, to state results for $N$-player regime switching games with mean field interaction. This sets the stage for the regime switching mean field game problem in Section~\ref{sec:rs-mfg} . Here we shift gears to the probabilistic approach. We explain the stochastic maximum principle, thereby obtaining the FBSDE system with both  McKean–Vlasov structure and regime switching. We state existence and uniqueness result for this regime switching McKean–Vlasov FBSDE using \cite{rolon2024markovian} and derive added regularity results needed in the next section.
Finally, in Section~\ref{sec:prop}, we establish the propagation of chaos property and show how the finite-player game converges to the mean field game as $N \to \infty$. We quantify the $\ep_N$-Nash property of the strategies derived from the limiting mean field solution when used in the finite $N$-player game. Thus, our results indicate that, under the stated assumptions, each player’s best-response strategy in the large-population limit is approximately optimal for any finite but large $N$, and we estimate the rate of convergence for this approximation.

We provide below few notations used frequently in this paper.
\begin{notation}\label{sec:notation}
\begin{enumerate}[label={(N\arabic*)}]
	\item $|x|$ denotes $l^2$ norm when $x$ is a vector and Frobenius norm when $x$ is a matrix or tensor. 
	\item $B_r(x)$ denote the open ball center at $x$ with radius $r$.
	\item $\mathcal{P}(\mathbb{R}^d)$ denote the space of probability measure on $\mathbb{R}^d$. $\mathcal{P}_p(\mathbb{R}^d)$ stands for the subspace of $\mathcal{P}(\mathbb{R}^d)$ of probability measures of order $p$, i.e., having a finite moment of order $p$, namely $M_p(\mu)=(\int_{\mathbb{R}^d}|x|^pd\mu(x))^{1/p}$, equip with $W_p$ the $p$-Wasserstein distance. 
	\item In this paper, we will use \textbf{boldface symbol} to denote vectors in $\mathbb{R}^{Nd}$.
	\item \label{not:S-L} Denote 
	$\mathcal{L}^2\left(D\right)=\left\{\xi: \Omega \rightarrow D, \mathcal{F}\right.$-measurable, $\left.\mathbb{E}|\xi|^2<\infty\right\}$
	\begin{align*}
		& \mathcal{S}^2_T(D) \\
		& \quad=\left\{\varphi:[0, T] \times \Omega \rightarrow D, \mathcal{F} \text {-adapted càdlàg process, } \mathbb{E}\left[\sup _{0 \leq t \leq T}\left|\varphi_t\right|^2\right]<\infty\right\}
	\end{align*}
	and
	\begin{align*}
		& \mathcal{L}^0_T(D)=\left\{\psi:[0, T] \times \Omega \rightarrow D, \mathcal{F} \text {-progressively measurable process }\right\} \\
		& \mathcal{L}^2_T(D)=\left\{\psi \in \mathcal{L}^0_T(D):\|\psi\|_2^2=\mathbb{E}\left[\int_0^T\left|\psi_t\right|^2 d t\right]<\infty\right\}.
	\end{align*}

\end{enumerate}
\end{notation}

\section{Regime Switching $N$-player Game with General Interaction}\label{sec:prob-setup}
This section is devoted to study the $N$-player regime-switching stochastic differential game with general
interaction not-necessary mean-field. We begin by describing the set-up of the game.
\subsection{General interaction $N$-player game.} Let \(T>0 \) be a fixed time horizon. Consider a probability space \( (\Omega, \mathcal{F},(\mathcal{F}_{s})_{s \in[t, T]}, \mathbb{P}) \) satisfying the {usual assumptions}. Define $N$ independent copies of  $d$-dimensional Brownian motions \( (W_{s}^k)_{s \in[0, T]}, k = 1, \ldots, N \). Let \( {(I_s)}_{s \in [0,T]} \) be a continuous time Markov chain with finite state space \( \mathcal{S}=\{1, \ldots, s_{0}\} \) and generator \( Q=(q_{i j})_{1 \leq i, j \leq s_{0}} \). For each $t>0$, denote $\mathcal{F}_t^I=\sigma\{I_s: 0 \leq s \leq t\}$, $\mathcal{F}_t^W=\sigma\{W_s^k:$ $0 \leq s \leq t, k=1,\cdots,N\}$, and put $\mathcal{F}_t=\mathcal{F}_t^{W, I}=\sigma\{W_s^k, I_s: 0 \leq s \leq t,k=1,\cdots,N\}$. Consider an interacting $N$-player game where the $k$-th player has state $X_t^k \in \R^d$ for times $t \in [0,T]$ satisfying the sde  
\beq\label{eq:player-sde}
dX_{s}^{k}=b^k(s, \bm{X}_{s}, \beta_s^k , I_{s}) d s+\sigma^k(s, \bm{X}_{s}, I_{s}) dW_{s}^{k},
\eeq
for $0 \leq s \leq T$ and $k=1, \ldots, N$.  Here we have used $ \bm{X}_{s}=(X_{s}^{1},..., X_{s}^{N}) \in (\R^{d})^N$. 
Furthermore, $\beta^k = (\beta_s^k)_{0 \leq s \leq T}$ is a $A$-valued progressively measurable stochastic process, where $A$ is the action space and $\beta_s^k$ represents the strategy chosen by player $k$ at time $s$. In addition, $\beta^k$ satisfies the admissibility condition: $\E [\int_0^T |\beta_s^k|^2 ds] < \infty$. Let $\mathbb{A}$ represent the space of all such strategies.
Below we state assumptions that ensure the existence and uniqueness of \eqref{eq:player-sde} for $\beta^k \in \mathbb{A}$. For each $(t, \mathbf{x}, i_0) \in [0,T] \times \R^{dN} \times \cs$ define the following cost functional for the $k$-th player:
\begin{align}\label{eq:cost}
		J_{k}(t, \mathbf{x}, i_0, \beta^{k}, \bm{\beta}^{-k} )= \mathbb{E}\left[{\int_t^{T} f^k (s, \bm{X}_{s}, \beta_{s}^{k}, I_{s-}) d s} +g^k ( \bm{X}_{T}, I_{T})| \bm{X}_t = \mathbf{x}, I_t = i_0\right],
\end{align}
for $k=1, \ldots, N$ 
where $X^k$ solves \eqref{eq:player-sde} modulated by the strategy $\beta^k$.
\begin{remark}\label{rem:A}
	Note that the action space $A$ is often assumed to be a closed and bounded subset of $\R^m$. However, in this work, we allow $A=\R^m$.
\end{remark}
\begin{definition}\label{def:NE-0}
	A strategy profile $ \hat{\bm{\beta}} = (\hat{\bm{\beta}}^1, \ldots, \hat{\bm{\beta}}^N) \in \mathbb{A}^N$ is said to be a Nash equilibrium 
	if for each $(t, \mathbf{x}, i_0) \in [0,T] \times \R^{dN} \times \cs$, the following equilibrium condition holds for all $k=1, \ldots, N$:
	\[
	J_{k}(t, \mathbf{x}, i_0, \hat{\bm{\beta}} ) \leq J_{k}(t, \mathbf{x}, i_0,  (\beta, \hat{\bm{\beta}}^{-k})), \quad \forall \bm{\beta} \in \mathbb{A}.
	\]
\end{definition}

\subsection{Assumptions}
Our results rely on the following assumptions on the coefficients.
\begin{hypothesis}\label{hyp:Nash} 
	The admissible control set $A$ is the entire $\R^{m}$. For any fixed $i_{0} \in \mathcal{S}$ {and $k \in \{1,...,N\}$}, there exist positive constants $L,L^\prime$ and $\lambda$ such that, such that for $b=b^k, \sigma= \sigma^k, f=f^k$ and $g=g^k$
	
	\begin{enumerate}[label={(H\arabic*)}]
		\item \label{hyp:H1} The drift \( b \) is an affine function of \( \alpha \): 
			\[
			b(t, \mathbf{x}, \alpha,i_0)=b_{1}(t, \mathbf{x},i_0)+b_{2}(t,i_0) \alpha,
			\]
			where \( b_{2}:[0, T] \times \cs \mapsto \mathbb{R}^{d \times m} \) and \( b_{1}: [0, T] \times \mathbb{R}^{dN} \times \cs \mapsto \mathbb{R}^{d} \) are measurable, bounded and continuously differentiable in $t$ and $(t, \mathbf{x})$ respectively. 
		
		\item {\label{hyp:H2} For any $ t \in [0,T]$, $f(t,\mathbf{x},\alpha,i_{0}) $ is once continuously differentiable in $ \mathbf{x}$ and $\alpha $, and the derivatives $\nabla_{\mathbf{x}}f , \nabla_{\alpha}f $ are $L$-Lipschitz continuous in $ \mathbf{x} $ and $\al$. Moreover, $ f $ is strongly convex in $\mathbf{x}$ and $ \alpha $:
		\begin{align}\label{eq:fcov}
			f(t,\mathbf{x}, \alpha^{\prime},i_{0})-f(t,\mathbf{x}, \alpha,i_{0})-\langle (\mathbf{x}^{\prime}-\mathbf{x},\alpha^{\prime}-\alpha) , \nabla_{(\mathbf{x},\alpha)} f(t, \mathbf{x}, \alpha,i_{0})\rangle \geq \lambda|\alpha^{\prime}-\alpha|^{2}.
		\end{align}}
		
		\item \label{hyp:H3}For all $ (t,\mathbf{x},\alpha) \in [0,T] \times \mathbb{R}^{dN}  \times A$, 
		\[|\nabla_{\alpha} f(t, \mathbf{x}, \alpha,i_{0})| \leq L^\prime(1+|\alpha|), \text{ where } L^\prime < \lambda. \] 
		 \begin{equation}\label{eq:ff}
			|f(t, \mathbf{x}, \alpha,i_{0})| \leq L(1+|\alpha|^{2}).
		\end{equation}
		\item \label{hyp:H4} For any $ i,j \in \mathcal{S}, i\neq j $, the transition rates
		$q_{ij} \leq L $.
		
		\item {\label{hyp:H5} $ \sigma: [0, T] \times \mathbb{R}^{dN} \times \cs \mapsto \mathbb{R}^{d \times d} $ is Lipschitz continuous in $ \mathbf{x} $:
		\begin{equation*}
		|\sigma(t,\mathbf{x},i_{0})-\sigma(t,\mathbf{y},i_{0})| \leq L(|\mathbf{x}-\mathbf{y}|).
		\end{equation*}
		}
		
		\item \label{hyp:H6}
		{$\sigma(t,\mathbf{x},i_{0})$ is convex in $x^k$. Moreover,} for $(t,\mathbf{x}) \in [0,T] \times \mathbb{R}^{dN}$, $\sigma(t,\mathbf{x},i_{0})$ is invertible has continuous second derivative in $t$ and $\mathbf{x}$. Denote $a_{ij}(t,\mathbf{x},i_{0})=\sigma(\sigma)^{\prime}_{ij}(t,\mathbf{x},i_{0})$ {and $\xi \in \R^d$. Then} the following are satisfied: 
		\begin{align*}
			\nu_{1}|\xi|^{2} &\leq \sum_{i,j}a_{ij}(t,\mathbf{x},i_{0})\xi_{i}\xi_{j} \leq \nu_{2} |\xi|^{2}, \quad \nu_i > 0 \\
			&|\frac{\partial a_{ij}(t,\mathbf{x},i_{0})}{\partial x_{l}}| \leq \nu_{2} . \quad l=1,\cdots, Nd
		\end{align*}
		\item \label{hyp:H7} The {terminal cost} $ g $ is continuously differentiable in $\mathbf{x}$. Moreover,
		\begin{align}
			|g(\mathbf{x},i_{0})| \leq L,   \label{eq:gg}\\
			|\nabla_\mathbf{x}g(\mathbf{x},i_0)|\leq L, \nonumber
		\end{align} 
	\end{enumerate}
\end{hypothesis}
{
As a consequence of our assumptions, we have the following result.
\begin{corollary}\label{cor:sigma-bd}
	From Hypothesis~\ref{hyp:H6} we have that for any $(t,\mathbf{x},i_0) \in [0,T] \times \mathbb{R}^{dN} \times \mathcal{S}$, there exist constant $C_1,C_2>0$ such that 
	\begin{align*}
		C_1\leq|\sigma(t,\mathbf{x},i_0)|^2\leq C_2.
	\end{align*}
\end{corollary}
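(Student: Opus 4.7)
The plan is to read off both bounds directly from the uniform ellipticity assumption in Hypothesis~\ref{hyp:H6}, using the fact that the norm $|\sigma|$ is the Frobenius norm (by the convention set in Notation~\ref{sec:notation}(N1)) and that $a(t,\mathbf{x},i_0) = \sigma \sigma^{\prime}(t,\mathbf{x},i_0)$.

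First I would note the identity $|\sigma(t,\mathbf{x},i_0)|^2 = \operatorname{tr}(\sigma\sigma^{\prime})(t,\mathbf{x},i_0) = \sum_{i=1}^d a_{ii}(t,\mathbf{x},i_0)$. The next step is to plug the standard basis vectors $\xi = e_i \in \mathbb{R}^d$ into the double-sided ellipticity bound
\[
\nu_1 |\xi|^2 \le \sum_{i,j} a_{ij}(t,\mathbf{x},i_0)\xi_i\xi_j \le \nu_2 |\xi|^2,
\]
which gives $\nu_1 \le a_{ii}(t,\mathbf{x},i_0) \le \nu_2$ for every $i=1,\dots,d$ and every $(t,\mathbf{x},i_0)$. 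Summing over $i$ yields
\[
d\nu_1 \;\le\; |\sigma(t,\mathbf{x},i_0)|^2 \;\le\; d\nu_2,
\]
so the claim holds with $C_1 := d\nu_1 > 0$ and $C_2 := d\nu_2$.

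There is no real obstacle here; the corollary is essentially a one-line consequence of testing the quadratic form of $a = \sigma\sigma^{\prime}$ against the coordinate axes. The only thing worth verifying, if one wanted to be careful, is that the notation $\sigma^{\prime}$ in Hypothesis~\ref{hyp:H6} denotes the transpose (so that $a$ is symmetric positive definite and the Frobenius norm of $\sigma$ is indeed $\sqrt{\operatorname{tr}(a)}$), which is the natural reading in this context and consistent with the rest of the hypothesis.
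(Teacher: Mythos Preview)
Your argument is correct and slightly different from the paper's. The paper rewrites the ellipticity bound as $\nu_1 \leq \|\sigma'\|_2^2 \leq \nu_2$ for the spectral norm $\|\cdot\|_2$, and then passes to the Frobenius norm via the standard inequality $\|\sigma'\|_2 \leq |\sigma'| \leq \sqrt{d}\,\|\sigma'\|_2$, obtaining $\nu_1 \leq |\sigma|^2 \leq d\nu_2$. You instead use the trace identity $|\sigma|^2 = \operatorname{tr}(\sigma\sigma') = \sum_i a_{ii}$ and test the quadratic form on the coordinate axes, which is more direct and in fact gives the sharper lower constant $C_1 = d\nu_1$ (versus the paper's $\nu_1$). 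Either way the corollary is immediate; your route just avoids the detour through the operator norm.
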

\begin{proof}
	From \ref{hyp:H6} we know that for any $\xi \in \mathbb{R}^d$ that
	\begin{equation*}
			\nu_{1}|\xi|^{2} \leq \xi^{\prime}\sigma\sigma^{\prime}\xi=|\sigma^{\prime}\xi|^2 \leq \nu_{2} |\xi|^{2}, \quad \nu_i > 0,
	\end{equation*} 
	which is equivalent to 
	\begin{equation*}
		\nu_1 \leq \|\sigma^{\prime}\|_2^2 \leq \nu_2.
	\end{equation*}
	Since $\|\sigma^\prime\|_2 \leq |\sigma^\prime|\leq \sqrt{d}\|\sigma^\prime\|_2$ we have that 
	\begin{equation*}
		\nu_1 \leq |\sigma^{\prime}|^2 \leq d\nu_2.
	\end{equation*}
	\end{proof}
	We introduce a few more notations related to the regime switching Markov chain $I_t$.
\begin{notation}
	\begin{enumerate}[label={(N\arabic*)}, start=6]
			\item \label{not:Mt} Define $M_{i_{0} j_{0}}(t)$ to be
		\beq\label{eq:M}
		M_{i_{0} j_{0}}(t)=[M_{i_{0} j_{0}}](t)-\left\langle M_{i_{0} j_{0}}\right\rangle(t) 
		\eeq
		and {for} $i_0 \neq j_0 \in \cs$
		\begin{align}\label{eq:[M],<M>}
			[M_{i_{0} j_{0}}](t)=\sum_{0 \leq s \leq t} \mathds{1}_{\{I_{s-}=i_{0}\}} \mathds{1}_{\{I_{s}=j_{0}\}}, \quad
			\left\langle M_{i_{0} j_{0}}\right\rangle(t)=\int_{0}^{t} q_{i_{0} j_{0}} \mathds{1}_{\{I_{s-}=i_{0}\}} ds,
		\end{align}
		and 
		$$
		[ M_{i_0 i_0} ] (t) = \lla M_{i_0 i_0} \rra (t) = 0 \quad \text{for each } i_0 \in \cs.
		$$
		\item \label{not:M}	Denote
		\begin{align*}
			& \mathcal{M}^2_T(D) \\
			& \quad=\left\{\lambda=\left(\lambda_{i_0 j_0}: i_0, j_0 \in \mathcal{M}\right) \text { such that } \lambda_{i_0 j_0} \in \mathcal{L}^0_T(D)\right. \text { predictable } \\
			& \left.\quad \lambda_{i_0 i_0} \equiv 0, \text { and } \sum_{i_0, j_0 \in \mathcal{M}} \mathbb{E} \int_0^T\left|\lambda_{i_0 j_0}(t)\right|^2 d\left[M_{i_0 j_0}\right](t)<\infty\right\}.
		\end{align*}
		For a collection of $\mathcal{F}$-predictable functions $\lambda_t=\left(\lambda_{i_0 j_0}(t)\right)_{i_0, j_0 \in \mathcal{M}}, t \geq 0$, we denote
		\begin{align*}
			\lambda_t \cdot d M_t & =\sum_{i_0, j_0 \in \mathcal{M}} \lambda_{i_0 j_0}(t) d M_{i_0 j_0}(t),\\
			\lambda_t^{\circ 2} \cdot d [M]_t & =\sum_{i_0, j_0 \in \mathcal{M}} |\lambda_{i_0 j_0}(t)|^2 d [M_{i_0 j_0}](t).
		\end{align*}
	\end{enumerate}
\end{notation}
}
\begin{remark}
Note $M_{i_0 j_0}(t)$ is a purely discontinuous and square integrable martingale with respect to $\cf_t$. 
The processes $[M_{i_0, j_0}](t)$ and $\left\langle M_{i_{0} j_{0}}\right\rangle(t)$ are respectively its optional and predictable
quadratic variations.
\end{remark}
\subsection{The $N$-player Nash system.}
We now provide heuristics for obtaining the PDEs describing the player dynamics. Fix $k$ and assume every player $i \neq k$ adopts the strategy $\hat{\beta}^i$. Then the best response problem for the $k$-th player is the optimization $\inf_{{\beta \in \mathbb{A}}} J_k(t, \mathbf{x}, i_0, (\beta, \hat{\bm \beta}^{-k})) =: U_k(t, \mathbf{x}, i_0)$ which is a stochastic control problem whose value solves the following HJB PDE
\begin{align}\label{eq:pde-bestr}
	0 =& \partial_t U_k(t, \mathbf{x}, i_0) + H^k(t, \mathbf{x}, \nabla_kU_k(t, \mathbf{x}, i_0),i_0 ) 
	+ \sum_{{i \neq k}} \nabla_{i} U_{k}(t, \mathbf{x}, i_{0}) \cdot  b^k(t, \mathbf{x}, \hat{\beta}^i, i_0) \nonumber\\
	&+ \frac{1}{2} \sum_{i=1}^N \textnormal{tr} (\nabla_i^2 U_k(t, \mathbf{x}, i_0) (\si^k)' \si^k(t, \mathbf{x}, i_0)) +\sum_{j_0 \in \cs} q_{i_0 j_0} (U_k(t, \mathbf{x}, j_0) - U_k(t, \mathbf{x}, i_0)), \\
	&U_k(T, \mathbf{x}, i_0) = g^k(\mathbf{x}, i_0),  \nonumber
\end{align}

where $H^k$ are the Hamiltonian given by
\begin{align}\label{eq:H}
	H^k(t,\mathbf{x}, p, i_0)= \inf_{\al \in \R^m} H^k(t,\mathbf{x}, p, \al, i_0)=\inf_{\al \in \R^m} [ f^k(t, \mathbf{x}, \al, i_0) + p \cdot b^k(t, \mathbf{x}, \al, i_0)  ]. 
\end{align}
The right hand side of equation~\eqref{eq:H} has a unique minimizer thanks to the following lemma, while the proof of this lemma is similar to \cite[Lemma 2.1]{carmona-delarue-siam} and \cite[Lemma 3.3]{ref2}.
\begin{lemma}\label{Lem:UniOpt}
	Under \ref{hyp:H1}-\ref{hyp:H3} of Hypothesis~\ref{hyp:Nash}, {for any $k=1,\cdots, N$} there exists a unique minimizer $ \hat{\alpha}(t, \mathbf{x}, p, i_{0}) $ of {
	\begin{equation} \label{eq:Hdef}
		\alpha \mapsto H^k(t,\mathbf{x}, p, \al, i_0)=f^k(t, \mathbf{x}, \alpha, i_{0}) + p \cdot b^k(t, \mathbf{x}, \alpha, i_{0}) . 
		\end{equation}}
	Furthermore, $ \hat{\alpha} $ is measurable, locally bounded, and Lipschitz continuous with respect to $ \mathbf{x} $ and $ p $ uniformly in $ t $, with 
	\beq\label{eq:al-ub}
	|\hat{\alpha}(t, \mathbf{x}, p, i_{0})| \leq C(1+|p|) ,
	\eeq
	for any $(t,x,m,i_0) \in [0,T] \times \mathbb{R}^{dN} \times \cs. $
\end{lemma}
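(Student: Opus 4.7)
The plan is to exploit the affine-in-$\alpha$ structure of $b^k$ from \ref{hyp:H1} together with the strong convexity and growth bound on $\nabla_\alpha f^k$ in \ref{hyp:H2}--\ref{hyp:H3} to reduce \eqref{eq:Hdef} to a strongly convex $C^1$ minimization on $\mathbb{R}^m$. Since $b^k(t,\mathbf{x},\alpha,i_0)=b_1(t,\mathbf{x},i_0)+b_2(t,i_0)\alpha$, the inner product $p\cdot b^k$ is affine in $\alpha$, so $\alpha\mapsto H^k(t,\mathbf{x},p,\alpha,i_0)$ inherits strong convexity in $\alpha$ with modulus $\lambda$ from $f^k$ (specialize \eqref{eq:fcov} to $\mathbf{x}'=\mathbf{x}$). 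Continuity together with strong convexity gives coercivity, hence a unique minimizer $\hat\alpha(t,\mathbf{x},p,i_0)$ characterized by the first-order condition
\[
	\nabla_\alpha f^k(t,\mathbf{x},\hat\alpha,i_0) + b_2(t,i_0)^\top p = 0.
\]

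Next I would derive the linear growth bound \eqref{eq:al-ub}. Adding \eqref{eq:fcov} to its symmetric counterpart yields strong monotonicity of $\alpha \mapsto \nabla_\alpha f^k(t,\mathbf{x},\alpha,i_0)$ with modulus $2\lambda$. Pairing this with $\hat\alpha$ (taking $\alpha'=\hat\alpha$, $\alpha=0$), substituting the first-order condition, and invoking $|\nabla_\alpha f^k(t,\mathbf{x},0,i_0)|\le L'$ from \ref{hyp:H3} together with the boundedness of $b_2$ from \ref{hyp:H1} reduces the resulting inequality to $2\lambda|\hat\alpha|^2 \le C|\hat\alpha|(1+|p|)$, giving $|\hat\alpha|\le C(1+|p|)$ and, in particular, local boundedness.

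For Lipschitz continuity in $(\mathbf{x},p)$ uniform in $t$, I would subtract the first-order conditions at two points $(\mathbf{x},p)$ and $(\mathbf{x}',p')$, pair with $\hat\alpha-\hat\alpha'$, and split the left-hand side as
\[
	\bigl[\nabla_\alpha f^k(t,\mathbf{x},\hat\alpha,i_0)-\nabla_\alpha f^k(t,\mathbf{x},\hat\alpha',i_0)\bigr] + \bigl[\nabla_\alpha f^k(t,\mathbf{x},\hat\alpha',i_0)-\nabla_\alpha f^k(t,\mathbf{x}',\hat\alpha',i_0)\bigr].
\]
Strong monotonicity bounds the first bracket below by $2\lambda|\hat\alpha-\hat\alpha'|^2$, while the $L$-Lipschitz continuity of $\nabla_\alpha f^k$ in $\mathbf{x}$ from \ref{hyp:H2} and the boundedness of $b_2$ control the remaining terms by $(L|\mathbf{x}-\mathbf{x}'|+C|p-p'|)|\hat\alpha-\hat\alpha'|$; dividing by $|\hat\alpha-\hat\alpha'|$ yields the claimed uniform Lipschitz estimate. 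Measurability of $\hat\alpha$ follows immediately from this joint continuity, with no further selection argument required.

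I do not anticipate a substantive obstacle; the argument is parallel to \cite[Lemma 2.1]{carmona-delarue-siam} and \cite[Lemma 3.3]{ref2}. The only mild care is in uniformity across the finite state space $\cs$: since $b_2(t,i_0)$ is bounded and the convexity and Lipschitz constants in \ref{hyp:H2}--\ref{hyp:H3} are postulated uniformly in $i_0$, every estimate above can be made independent of $i_0\in\cs$, so the discrete regime enters only as a parameter that the same bounds cover uniformly.
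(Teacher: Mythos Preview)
Your proposal is correct and follows essentially the same approach as the paper: strong convexity of $H^k$ in $\alpha$ gives a unique minimizer, the first-order condition combined with strong monotonicity of $\nabla_\alpha f^k$ and the bounds in \ref{hyp:H1}--\ref{hyp:H3} yield both the linear growth \eqref{eq:al-ub} and the Lipschitz estimate in $(\mathbf{x},p)$. The only cosmetic difference is that the paper invokes a measurable selection theorem for measurability and phrases the Lipschitz step through variational inequalities, whereas you use the first-order equality (valid since $A=\mathbb{R}^m$) and deduce measurability directly from continuity; both are fine.
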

\begin{proof}
	{For simplicity we omit the index $k$ in this proof.} Using strong convexity in $\al$ of $f$ from \ref{hyp:H2} and linearity in $\alpha$ of $b$ from \ref{hyp:H1}, we have strong convexity of the Hamiltonian $H$. Thus there exists a unique minimizer $ \hat{\alpha}(t, \mathbf{x}, p, i_{0}) $ of $\alpha \mapsto H(t,\mathbf{x}, p, \al, i_0)$. The measurablility of $ \hat{\alpha}(t, \mathbf{x}, p, i_{0}) $ is a consequence of classical measurable selection theorem, see \cite[Theorem 14.37]{rockafellar2009variational}. Furthermore, by strong convexity of $H$ in $\al$ we have
	\begin{align*}
			H(t,\mathbf{x}, p, 0, i_0) & \geq H(t,\mathbf{x}, p, \hat{\alpha}(t, \mathbf{x}, p, i_{0}), i_0 ) \\
			& \geq H(t,\mathbf{x}, p, 0, i_0)+\left\langle\hat{\alpha}(t, \mathbf{x}, p, i_{0}), \nabla_{\alpha} H(t,\mathbf{x}, p, 0, i_0)\right\rangle+\lambda|\hat{\alpha}(t, \mathbf{x}, p, i_{0})|^{2}.
	\end{align*}
	This implies 
	\begin{equation*}
		\lambda \left|\hat{\al}(t,\mathbf{x},p,i_0)\right|^2 \leq -\langle\hat{\al}(t,\mathbf{x},p,i_0), \nabla_\al H(t,\mathbf{x},p,0,i_0) \rangle.
	\end{equation*}
	From Cauchy-Schwarz inequality we now get
	\begin{equation*}
		\left|\hat{\al}(t,\mathbf{x},p,i_0)\right| \leq \frac{1}{\lambda}\left| \nabla_\al H(t,\mathbf{x},p,0,i_0) \right|.
	\end{equation*}
	From the definition of $H$ in \eqref{eq:Hdef} and Hypothesis~\ref{hyp:H1}
	\begin{align*}
		|\hat{\alpha}(t, \mathbf{x}, p, i_{0})| \leq \lambda^{-1}\left(\left|\nabla_{\alpha} f(t, \mathbf{x}, 0,i_0)\right|+\left|b_{2}(t,i_0)\right||p|\right) 
		\leq C(1+|p|),
	\end{align*}
	for some constant $C$, where the last inequality follow from Hypothesis~\ref{hyp:H3}. The Lipschitz continuity follows from a suitable adaptation of the implicit function theorem to the variational inequalities that $\hat{\alpha}(t, \mathbf{x}, p, i_{0})$ appears as the unique solution of. Indeed, for {$\mathbf{x}, \mathbf{x}^{\prime} \in\mathbb{R}^{Nd} , p, p^{\prime} \in \mathbb{R}^d$} and $(t, i_0) \in[0, T] \times \mathcal{S}$, from the first order optimality condition we have following the two inequalities:
	\begin{align*}
		& \left(\hat{\alpha}^\prime-\hat{\alpha}\right) \cdot \nabla_\al H(t, \mathbf{x},  p, \hat{\alpha},i_0) \geq 0, \\
		& \left(\hat{\alpha}-\hat{\alpha}^\prime\right) \cdot \nabla_\al H\left(t, \mathbf{x}^{\prime},  p^{\prime}, \hat{\alpha}^\prime,i_0\right) \geq 0 .
	\end{align*}
	where $\hat{\al}=\hat{\alpha}(t, \mathbf{x},  p,i_0)$ and $\hat{\al}^{\prime}=\hat{\alpha}\left(t, \mathbf{x}^{\prime},  p^{\prime},i_0\right)$. Summing these inequalities, we get:
	\begin{align*}
		 \left(\hat{\alpha}^\prime-\hat{\alpha}\right) \cdot\left(\nabla_\al H\left(t, \mathbf{x}^{\prime},  p^{\prime}, \hat{\alpha}^\prime,i_0\right)-\nabla_\al H(t, \mathbf{x},  p, \hat{\alpha},i_0)\right) \leq 0.
	\end{align*}
	By adding and subtracting the term $\left(\hat{\alpha}^\prime-\hat{\alpha}\right) \cdot\nabla_\al H\left(t, \mathbf{x},  p, \hat{\alpha}^\prime,i_0\right)$ the above inequality becomes:
	\begin{align}\label{eq:al'-alpH'-pH}
		& \left(\hat{\alpha}^\prime-\hat{\alpha}\right) 
		  \cdot\left(\nabla_\al H\left(t, \mathbf{x},  p, \hat{\alpha}^\prime,i_0\right)-\nabla_\al H(t, \mathbf{x},  p, \hat{\alpha},i_0)\right) \nonumber\\
		& \leq\left(\hat{\alpha}^\prime-\hat{\alpha}\right) 
		  \cdot\left(\nabla_\al H\left(t, \mathbf{x},  p, \hat{\alpha}^\prime,i_0\right)-\nabla_\al H\left(t, \mathbf{x}^{\prime},  p^{\prime}, \hat{\alpha}^\prime,i_0\right)\right).
	\end{align}	
	Exchanging the roles of $\alpha$ and $\alpha^{\prime}$ in \eqref{eq:fcov} and summing the resulting inequalities, we have for any $\alpha, \alpha^{\prime} \in A$,
\begin{equation}\label{eq:al'-alpf'-pf}
	\left(\alpha^{\prime}-\alpha\right) \cdot\left(\nabla_\al f\left(t, \mathbf{x},  \alpha^{\prime},i_0\right)-\nabla_\al f(t, \mathbf{x},  \alpha,i_0)\right) \geq 2 \lambda\left|\alpha^{\prime}-\alpha\right|^2 .
\end{equation}
	Using the \eqref{eq:al'-alpf'-pf} together with the fact that $\nabla_{\al} f + p\cdot b_2 = \nabla_{\al}H$ from the definition of $H$, we deduce that:
	\begin{align*}
		 2 \lambda\left|\hat{\alpha}^\prime-\hat{\alpha}\right|^2  \leq\left(\hat{\alpha}^\prime-\hat{\alpha}\right) \cdot\left(\nabla_\al H\left(t, \mathbf{x},  p, \hat{\alpha}^\prime,i_0\right)-\nabla_\al H\left(t, \mathbf{x},  p, \hat{\alpha},i_0\right)\right) .
	\end{align*}
	Plugging \eqref{eq:al'-alpH'-pH} into the above inequality we have
		\begin{align*}
			 2 \lambda\left|\hat{\alpha}^\prime-\hat{\alpha}\right|^2  &\leq\left(\hat{\alpha}^\prime-\hat{\alpha}\right) \cdot\left(\nabla_\al H\left(t, \mathbf{x},  p, \hat{\alpha}^\prime,i_0\right)-\nabla_\al H\left(t, \mathbf{x}^{\prime},  p^{\prime}, \hat{\alpha}^\prime,i_0\right)\right) \\
			& \leq C\left|\hat{\alpha}^\prime-\hat{\alpha}\right|\left(\left|\mathbf{x}^{\prime}-\mathbf{x}\right|+\left|p^{\prime}-p\right|\right)
		\end{align*} 
		where  $C$  only depends upon the bound for  $b_2$  and the Lipschitz-constant of $ \nabla_\al f $ as a function of $ \mathbf{x}$.

\end{proof}
Assuming the existence of the solution to \eqref{eq:pde-bestr} the best response to player $k$ is now given by
\beq\label{eq:best-resp}
\beta_s^k = \hat{\al}^k (s, \mathbf{x}, \nabla_k U_k(s, \mathbf{x}, i_0), i_0),
\eeq 
where $\hat{\al}^k$ is introduced in Lemma~\ref{Lem:UniOpt}.
To find a Nash equilibrium, every player solves this best response problem. Plugging \eqref{eq:best-resp} in \eqref{eq:pde-bestr}, the system of equations to solve is transformed to:
\begin{align}\label{eq:Nash}\tag{Nash-HJB}
	0=&  \partial_{t} v_i(t,\mathbf{x}, i_{0}) + \cg^{i} v(t, \mathbf{x}, i_0) + f^i(t, \mathbf{x}, \hat{\alpha}^{i}(t,\mathbf{x},\nabla_{i}v_{i}(t,\mathbf{x},i_0),i_{0}),i_{0}),  \\
	&v_i(T,\mathbf{x}, i_{0})=g^i(\mathbf{x}, i_{0}), \nonumber
\end{align}
where the operator $\cg^{i}$ is defined as
	\begin{align*}
		\cg^{i} v(t, \mathbf{x}, i_0)   = 
		&\sum_{k = 1}^{N} \nabla_{k} v_i(t,\mathbf{x}, i_{0}) \cdot	b^i(t, \mathbf{x}, \hat{\alpha}^{k}(t,\mathbf{x},\nabla_{k}v_{k}(t,\mathbf{x},i_0),i_{0}),i_{0} ) \\
		&+\frac{1}{2} \sum_{k = 1}^{N} \operatorname{tr} ( \nabla^{2}_{k} v_i(t,\mathbf{x}, i_{0})(\sigma^i)^{\prime} \sigma^i(t, \mathbf{x}, i_{0} )) 
		+\sum_{j_{0} \in \mathcal{S}} q_{i_{0} j_{0}}(v_i(t, \mathbf{x}, j_{0})-v_i(t, \mathbf{x}, i_{0}))
	\end{align*}
\subsection{Unique Solvability of \eqref{eq:Nash}.}
The majority of our work in this paper is to show that \eqref{eq:Nash} exhibits a unique solution. We perform this operation through several stages. In the following Lemma~\ref{Lem:BdCoef}, we show this result under some additional assumptions on the coefficients. We first state an elementary inequality proved for completeness. 
\begin{lemma}\label{Lem:positivec1c2}
	For $a_{ij}, a_i, x_i \in \mathbb{R}$, where $i,j \in \{1,2,...,n\}$, if $\{a_{ij}\}$ and $\{a_i\}$ are bounded, there always exist positive constants $c_1, c_2$ such that 
	\begin{align*}
		c_1 \sum_{i=1}^n x_i^2 +\sum_{i,j=1}^na_{ij}x_ix_j+\sum_{i=1}^na_ix_i+c_2 \geq 0.
		\end{align*}	
\end{lemma}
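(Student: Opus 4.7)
The plan is to use Young's inequality to absorb the cross terms and linear terms into a quadratic form that can be dominated by $c_1 \sum_i x_i^2 + c_2$.

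First, I would fix uniform bounds $M \geq |a_{ij}|$ for all $i,j$ and $K \geq |a_i|$ for all $i$, which exist by the boundedness hypothesis. Then I would control the mixed quadratic term using $|x_i x_j| \leq \tfrac{1}{2}(x_i^2 + x_j^2)$:
\begin{equation*}
\Bigl|\sum_{i,j=1}^n a_{ij} x_i x_j\Bigr| \leq \frac{M}{2} \sum_{i,j=1}^n (x_i^2 + x_j^2) = Mn \sum_{i=1}^n x_i^2.
\end{equation*}
Next, for the linear term I would apply Young's inequality $|a_i x_i| \leq \tfrac{1}{2}a_i^2 + \tfrac{1}{2} x_i^2$, yielding
\begin{equation*}
\Bigl|\sum_{i=1}^n a_i x_i \Bigr| \leq \frac{1}{2}\sum_{i=1}^n x_i^2 + \frac{nK^2}{2}.
\end{equation*}

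Combining these two bounds and collecting terms, we obtain
\begin{equation*}
\sum_{i,j=1}^n a_{ij} x_i x_j + \sum_{i=1}^n a_i x_i \geq -\Bigl(Mn + \tfrac{1}{2}\Bigr)\sum_{i=1}^n x_i^2 - \frac{nK^2}{2}.
\end{equation*}
Therefore setting $c_1 := Mn + \tfrac{1}{2}$ and $c_2 := \tfrac{nK^2}{2}$ (both strictly positive) produces the desired non-negative lower bound.

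There is essentially no obstacle here: the proof is just two applications of Young's inequality followed by a trivial bookkeeping step. The only point worth flagging is that $c_1, c_2$ depend on the dimension $n$ and on the uniform bounds $M, K$ of the coefficients, but not on the particular values of the $a_{ij}, a_i$, which is exactly what the statement requires.
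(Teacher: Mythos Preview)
Your proof is correct and follows essentially the same approach as the paper: both arguments bound the cross terms $a_{ij}x_ix_j$ via $|x_ix_j|\le\tfrac12(x_i^2+x_j^2)$ and handle the linear terms by completing the square (equivalently, Young's inequality), then choose $c_1,c_2$ large enough in terms of $n$ and the uniform bounds on the coefficients. Your presentation is in fact slightly cleaner, since you bound absolute values directly rather than introducing the auxiliary quantities $\mathscr{A}_1,\mathscr{A}_2$; the resulting constants $c_1=Mn+\tfrac12$ and $c_2=\tfrac{nK^2}{2}$ are of the same order as the paper's.
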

\begin{proof}
		Observe 
			\begin{align}\label{eq:scra1}
				\mathscr{A}_1=\frac{1}{2}\sum_{i,j=1}^n a_{ij}\left(x_i^2+x_j^2\right)+\sum_{i,j=1}^na_{ij}x_ix_j=\frac{1}{2}\sum_{i,j=1}^n a_{ij}\left(x_i+x_j\right)^2 \geq 0,
			\end{align}
			and 
			\begin{align}\label{eq:scra2}
				\mathscr{A}_2=\sum_{i=1}^N\left(x_i^2+a_ix_i+\frac{a_i^2}{4}\right)=\sum_{i=1}^N\left(x_i+\frac{a_i}{2}\right)^2 \geq 0.
				\end{align}
Then
	\begin{align}\label{eq:c1c2aiaij}
	c_1 \sum_{i=1}^n x_i^2 &+\sum_{i,j=1}^na_{ij}x_ix_j+\sum_{i=1}^na_ix_i+c_2 \nonumber\\
	&=c_1 \sum_{i=1}^n x_i^2-\frac{1}{2}\sum_{i,j=1}^n a_{ij}\left(x_i^2+x_j^2\right) +\mathscr{A}_1+\mathscr{A}_2-\sum_{i=1}^n x_i^2-\sum_{i=1}^n \frac{a_i^2}{4}+c_2 
\end{align}
Using \eqref{eq:scra1} and \eqref{eq:scra2} in \eqref{eq:c1c2aiaij} we have
\begin{align*}
	c_1 \sum_{i=1}^n x_i^2 &+\sum_{i,j=1}^na_{ij}x_ix_j+\sum_{i=1}^na_ix_i+c_2 \\
	&\geq (c_1-1) \sum_{i=1}^n x_i^2-\frac{1}{2}\sum_{i,j=1}^n a_{ij}\left(x_i^2+x_j^2\right)-\sum_{i=1}^n \frac{a_i^2}{4}+c_2\geq 0,
\end{align*}
where the last inequality follows by choosing $c_1 \geq 1+n \sup_{i,j} a_{ij}$ and $c_2 \geq \sum_{i=1}^n\frac{a_i^2}{4}$.
\end{proof}
In the following lemma, we prove the existence and uniqueness of \eqref{eq:Nash} under certain assumptions on the coefficients.
\begin{lemma}\label{Lem:BdCoef}
	 Assume the running cost $ f^i $ is bounded. Then under Hypothesis~\ref{hyp:Nash}, the system of equations~\eqref{eq:Nash} admits a unique solution $\{v_i(t, \mathbf{x},i_0), \, i \in [N], i_0 \in \cs \}  $ 
which is bounded and continuous on $[0,T] \times \mathbb{R}^{Nd}  $, differentiable in $\mathbf{x} \in \mathbb{R}^{Nd}$ with a bounded and continuous gradient on $ [0,T) \times \mathbb{R}^{Nd} $.
\end{lemma}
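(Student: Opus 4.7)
The plan is to prove existence and uniqueness by a Banach fixed-point argument on a suitable space of bounded $C^1$ functions. Let $\mathcal{X}$ denote vector-valued functions $\mathbf{w} = (w_i(\cdot,\cdot,i_0))_{i \in [N], i_0 \in \mathcal{S}}$ which are bounded continuous on $[0,T] \times \mathbb{R}^{Nd}$ and admit a bounded continuous gradient in $\mathbf{x}$ on $[0,T) \times \mathbb{R}^{Nd}$, equipped with the natural $C^1_b$ norm. Given $\mathbf{w} \in \mathcal{X}$, Lemma~\ref{Lem:UniOpt} lets us define the feedback controls $\hat{\alpha}^k_{\mathbf{w}}(t,\mathbf{x},i_0) := \hat{\alpha}^k(t,\mathbf{x},\nabla_k w_k(t,\mathbf{x},i_0),i_0)$, which are bounded (and Lipschitz in $\mathbf{x}$) thanks to \eqref{eq:al-ub}. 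Plugging these into the drift and running cost of \eqref{eq:Nash} yields for each $i \in [N]$ a linear system of parabolic equations for $(v_i(\cdot,\cdot,j_0))_{j_0 \in \mathcal{S}}$, coupled through the bounded jump terms $\sum_{j_0} q_{i_0 j_0}(v_i(\cdot,j_0)-v_i(\cdot,i_0))$. I would then define the map $\Phi: \mathbf{w} \mapsto \mathbf{v}$, where $\mathbf{v}$ solves this linearized system with terminal data $g^i$.

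The linearized system is solved probabilistically: consider the SDE
\[
dX_s^k = b^k(s,X_s,\hat{\alpha}^k_{\mathbf{w}}(s,X_s,I_{s-}),I_s)\,ds + \sigma^k(s,X_s,I_s)\,dW_s^k,
\]
for which Hypothesis~\ref{hyp:Nash} together with the boundedness and Lipschitz continuity of $\hat{\alpha}^k_{\mathbf{w}}$ guarantees a unique strong solution. The Feynman--Kac representation (now with a regime-switching generator) yields
\[
v_i(t,\mathbf{x},i_0) = \mathbb{E}_{t,\mathbf{x},i_0}\!\left[\int_t^T f^i(s,X_s,\hat{\alpha}^i_{\mathbf{w}}(s,X_s,I_{s-}),I_{s-})\,ds + g^i(X_T,I_T)\right].
\]
The pointwise bound $|v_i|\le L(1+T)$ is immediate from the boundedness of $f^i$ and $g^i$. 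For the gradient bound I would differentiate under the expectation using the tangent process $\partial_{\mathbf{x}}X$, which satisfies a linear matrix-valued SDE whose coefficients are controlled by the Lipschitz constants of $b^k,\sigma^k$ (Hypothesis~\ref{hyp:H1},~\ref{hyp:H5}) and the Lipschitz estimate for $\hat{\alpha}^k$ from Lemma~\ref{Lem:UniOpt}. Combined with $|\nabla g^i|\le L$ and the Lipschitz estimate on $\nabla_{\mathbf{x}}f^i$ from~\ref{hyp:H2}, this produces a gradient bound for $v_i$ on $[0,T)$ which, by a careful choice of the ball in $\mathcal{X}$, can be arranged to be invariant under $\Phi$.

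For contraction, the difference $\Phi(\mathbf{w})-\Phi(\mathbf{w}')$ solves a linear system whose inhomogeneities and drift perturbations are proportional to $|\hat{\alpha}^k_{\mathbf{w}}-\hat{\alpha}^k_{\mathbf{w}'}|$, and the Lipschitz bound of Lemma~\ref{Lem:UniOpt} dominates this by $|\nabla_k w_k - \nabla_k w_k'|\le \|\mathbf{w}-\mathbf{w}'\|_{\mathcal{X}}$. A Gronwall argument on the probabilistic representation, exploiting that $f^i,g^i$ are bounded and Lipschitz, then gives $\|\Phi(\mathbf{w})-\Phi(\mathbf{w}')\|_{\mathcal{X}} \le C(T-t_0)\|\mathbf{w}-\mathbf{w}'\|_{\mathcal{X}}$ on $[t_0,T]$ with $C$ independent of $t_0$. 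For $T-t_0$ small, $\Phi$ is a strict contraction on a suitable closed ball of $\mathcal{X}$, giving existence and uniqueness on $[t_0,T]$; a finite iteration backward in time, using the bounded a priori estimate of the previous step as new terminal data, extends the solution to $[0,T]$.

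The main obstacle will be the gradient estimate, and specifically how it propagates through the fixed-point iteration: naive differentiation of the Feynman--Kac formula brings in $\nabla_{\mathbf{x}}\hat{\alpha}^k_{\mathbf{w}}$, which involves $\nabla^2 w_k$ that is not controlled in $\mathcal{X}$. I would resolve this either by (i) a Bismut--Elworthy--Li style representation, available thanks to the uniform ellipticity in Hypothesis~\ref{hyp:H6} and Corollary~\ref{cor:sigma-bd}, which trades the spatial derivative for a Malliavin weight with an integrable singularity of order $(T-t)^{-1/2}$; or (ii) working with the slightly smaller subspace of $\mathcal{X}$ consisting of functions whose gradients are also Hölder in $\mathbf{x}$, which makes the linearized coefficients Hölder continuous and invokes parabolic Schauder estimates, the bounded linear jump coupling $q_{i_0 j_0}$ being handled by viewing the full system as an $N|\mathcal{S}|$-component parabolic system. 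Either route closes the fixed-point argument and delivers the asserted bounded, continuous solution with bounded continuous gradient.
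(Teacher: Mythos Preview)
Your approach is viable but genuinely different from the paper's. The paper does not set up a fixed-point argument at all: it restricts \eqref{eq:Nash} to a ball $B_r\subset\mathbb{R}^{Nd}$, uses Lemma~\ref{Lem:positivec1c2} to verify the structural sign condition
\[
\Bigl[\sum_{j_0\neq i_0}q_{i_0j_0}(v_i(\cdot,j_0)-v_i(\cdot,i_0))+f^i\Bigr]v_i \ge -c_1\sum_{k_0,i}|v_i(\cdot,k_0)|^2-c_2
\]
that Ladyzhenskaya--Solonnikov--Ural'tseva need for their a priori $L^\infty$ bound \cite[p596, (7.4)]{ref8}, checks the remaining growth hypotheses \cite[Eq.~VII.6.1,~6.3,~6.4]{ref8}, and then invokes \cite[Theorem VII.7.1]{ref8} as a black box to get existence, uniqueness and a gradient bound on $[0,T]\times B_r$ independent of $r$; sending $r\to\infty$ as in \cite[Prop.~3.3]{ma1994solving} finishes. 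So the paper outsources all the hard analysis to a classical quasilinear parabolic existence theorem, whereas you propose to build the solution by hand via Feynman--Kac and a contraction.

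Your route is more self-contained and more probabilistic in flavour, which fits the later sections, but it is substantially longer to execute. The contraction in the $C^1$ norm (or in a H\"older-gradient norm, per your option~(ii)) requires tracking how the Bismut--Elworthy--Li weight or the Schauder constants depend on the frozen feedback $\hat{\alpha}^k_{\mathbf{w}}$, and the backward-in-time patching needs a \emph{uniform} a priori gradient bound to prevent the invariant ball in $\mathcal{X}$ from growing at each step. None of this is a gap---these are standard devices---but the paper's appeal to \cite{ref8} sidesteps all of it in a few lines, and Lemma~\ref{Lem:positivec1c2} exists precisely to feed the right inequality into that machinery.
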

\begin{proof} 
{First modify the Nash system with bounded $f^i$ to be defined on $[0,T] \times B_r$ where for some positive integer $r$, $B_r \subset \R^{Nd}$ is the ball centered at the origin with radius $r$, with boundary conditions $v_i(t, \mathbf{x}, i_0) = g^i(\mathbf{x}, i_0)$. Then}	note by Hypothesis~\ref{hyp:Nash}
	\[ \sigma^i(\sigma^i)^{\prime}_{ij}(t,\mathbf{x},i_{0})\xi_{i}\xi_{j} \geq 0 .\]
	Since $f^i$ and $ q_{i_{0}j_{0}} $ are bounded, by Lemma~\ref{Lem:positivec1c2} there exist positive constants $ c_{1},c_{2} $ such that
	\begin{align*}
		\left[\sum_{j_{0} \neq i_{0}} q_{i_{0} j_{0}}(v_i(t, \mathbf{x}, j_{0})-v_i(t, \mathbf{x}, i_{0}))+ f^i(t,\mathbf{x},\alpha,i_{0})\right]&v_{i}(t,\mathbf{x},i_{0})\\
		&\geq-c_{1}(\sum_{k_0 \in \mathcal{S}}\sum_{i=1}(v_{i}(t,\mathbf{x},k_0))^{2})-c_{2}.
	\end{align*}
	Consequently from \cite[p596, (7.4)]{ref8} we have
	\begin{align}\label{ieq:a}
		|v_i(t, \mathbf{x}, i_{0})| 
		\leq \min _{c>c_{1}} e^{c T}\left[\sup_{\mathbf{x} \in \mathbb{R}^{d},i \in [N]}|g^i(\mathbf{x},i_{0})|+\sqrt{\frac{c_{2}}{c-c_{1}}}\right] \equiv M.
	\end{align}
	Thus, using \eqref{ieq:a}
	\begin{align}\label{ieq:b}
		\left|f^i(t,\mathbf{x},\alpha,i_{0})
		+\sum_{j_{0} \neq i_{0}} q_{i_{0} j_{0}}(v_i(t, \mathbf{x}, j_{0})-v_i(t, \mathbf{x}, i_{0}))\right| \leq C .
	\end{align}
	Then by choosing appropriate $P$, e.g. $P(|p|,|v|)=Ce^{|v|-|p|}$, one has
	\[ C \leq \varepsilon(|v|)+P(|p|,|v|), \]
	where $\varepsilon(M)$ is a sufficiently small number, while $P(|p|,|v|) \rightarrow 0 $ when $|p| \rightarrow \infty$. {From \ref{hyp:H1} and  \eqref{eq:al-ub}, and }from \eqref{ieq:a} and \eqref{ieq:b}, \cite[Eq VII.6.3]{ref8} is satisfied, while \cite[Eq. VII.6.1]{ref8} and \cite[Eq. VII.6.4)]{ref8} follow from Hypothesis~\ref{hyp:Nash}. Then from \cite[Theorem VII.7.1]{ref8}, \eqref{eq:Nash} admits a unique solution $\{v_i(t, \mathbf{x},i_0), \, i \in [N], i_0 \in \cs \}  $ which is bounded and continuous on $[0,T] \times B_r  $, differentiable in $\mathbf{x} \in B_r$ with a bounded and continuous gradient on $ [0,T) \times B_r$, and where the bound is independent of $r$. Then taking limit $r \to \infty$, similar to what is done in \cite[Prop. 3.3]{ma1994solving}, we argue that the lemma is true.
\end{proof}
\begin{notation}
	For ease in presentation, we use the following notations in the proof of Theorem~\ref{Thm:3.1}.   Denote $Y_{t}^{i}=v_{i}^{\pi}(t, \tilde{\bm X}_{t},I_{t}), \quad Z_{t}^{i, j}=\nabla_{j} v_{i}^{\pi}(t, \tilde{\bm X}_{t}, I_{t-}), \quad t \in[t_{0}, T], \quad(i, j) \in\{1, \cdots, N\}^{2},$ also $Z_{t}=(Z_{t}^{i,j})_{i,j=1,...,N} \in (\mathbb{R}^{d})^{N \times N}$. We will define $v^{\pi}_{
		i}$ and $\tilde{\bm X}_{t}$ in the proof. Denote $A_{t}^{i}= \hat{\alpha}^{i}(t, \tilde{\bm X}_{t}, Z_{t}^{i,i}, I_{t-})$, $A_{t}^{\pi,i}= \hat{\alpha}^{\pi}_{i}(t, \tilde{\bm X}_{t}, Z_{t}^{i,i}, I_{t-})$. $B_{t}^{i}=b^i(t,\tilde{\bm X}_t,A_{t}^{i},I_{t-})$ and $\Sigma_{t}^{i}=\sigma^{i}(t,\tilde{\bm X}_{t}, I_{t-})$. In the following proof, {Denote $(\delta v_i)_{j_0,i_0}(t,\mathbf{x},\cdot)=v_i(t,\mathbf{x},j_0)-v_i(t,\mathbf{x},i_0)$.}
\end{notation}
\subsubsection{\bf Strategy of proof under Hypothesis~\ref{hyp:Nash}}\label{sec:strategy}
We remind the reader that Hypothesis~\ref{hyp:Nash} does not assume the boundedness of $f^i$ or $b^i$. We proceed towards removing this assumption, by introducing a truncation function. Namely, let $\pi: \mathbb{R}^{k} \mapsto \mathbb{R}^{k}$ be a smooth function such that 
	\[
	\pi(x)=
	\begin{cases}
		x, & |x| \leq R,\\
		0, & |x| > 2R,
	\end{cases}
	\]
	and the Lipschitz constant of $ \pi $, $ ||\pi||_{\text{Lip}} \leq 2 $. Denote $ \hat{\alpha}^{\pi}_{i}(t,\mathbf{x},p,i_{0})=\pi(\hat{\alpha}^{i}(t,\mathbf{x},p,i_{0})) $. 
In order to deal with the unboundedness of $f^i$ in \eqref{eq:Nash}, we first replace \( f^{i}(t, \mathbf{x}, \hat{\alpha}^{i}(t, \mathbf{x}, \nabla_{i} v_{i}(t, \mathbf{x},i_0),i_{0}),i_{0}) \) by \( f^{i}(t, \mathbf{x}, \hat{\alpha}^{\pi}_{i}(t, \mathbf{x}, \nabla_{i} v^{\pi}_{i}(t, \mathbf{x},i_0),i_{0}),i_{0}) \). Let $ v^{\pi} $ be the consequent value function and call the consequent system the \textbf{truncated Nash system} \label{eq:tNash}. 

{
\begin{align}\label{eq:t-Nash}\tag{t-Nash-HJB}
	0=&  \partial_{t} v_i^{\pi}(t,\mathbf{x}, i_{0}) + \cg^{i} v^{\pi}(t, \mathbf{x}, i_0) + f^i(t, \mathbf{x}, \hat{\alpha}_{i}^{\pi}(t,\mathbf{x},\nabla_{i}v_{i}^{\pi}(t,\mathbf{x},i_0),i_{0}),i_{0}),  \\
	&v_i^{\pi}(T,\mathbf{x}, i_{0})=g^i(\mathbf{x}, i_{0}), \nonumber
\end{align}
where the operator $\cg^{i}$ is defined as
	\begin{align*}
		\cg^{i} v^{\pi}(t, \mathbf{x}, i_0)   = 
		&\sum_{k = 1}^{N} \nabla_{k} v_i^{\pi}(t,\mathbf{x}, i_{0}) \cdot	b^i(t, \mathbf{x}, \hat{\alpha}^{k}(t,\mathbf{x},\nabla_{k}v_{k}(t,\mathbf{x},i_0),i_{0}),i_{0} ) \\
		&+\frac{1}{2} \sum_{k = 1}^{N} \operatorname{tr} ( \nabla^{2}_{k} v_i^{\pi}(t,\mathbf{x}, i_{0})(\sigma^i)^{\prime} \sigma^i(t, \mathbf{x}, i_{0} )) 
		+\sum_{j_{0} \in \mathcal{S}} q_{i_{0} j_{0}}(v_i^{\pi}(t, \mathbf{x}, j_{0})-v_i^{\pi}(t, \mathbf{x}, i_{0})).
	\end{align*}
}

Since $ f^{i} $ has quadratic growth on $ \alpha $ due to relation \eqref{eq:ff}, one obtains that $ f^{i}(t, \mathbf{x}, \hat{\alpha}^{\pi}_{i}(t, \mathbf{x}, \nabla_{i} v^{\pi}_{i}(t, \mathbf{x},i_0),i_{0}),i_{0}) $ is bounded for all $ t \in [0,T], \mathbf{x} \in \mathbb{R}^{Nd}, \hat{\alpha}^{i} \in A$. Furthermore, from relation \eqref{eq:gg}, the terminal cost $ g^{i} $ is also bounded for all $ \mathbf{x} \in \mathbb{R}^{Nd}$,  $i=1,\ldots,N $. Therefore, the truncated Nash system becomes a system of quasilinear uniformly parabolic equations with bounded coefficients. By Lemma~\ref{Lem:BdCoef}, the truncated Nash system has a unique solution $ v^{\pi} $ such that $ |v^{\pi}| $ and $|\nabla_{j}v_{i}^{\pi}|$ are bounded (with the bound possibly depending on $\pi$ and $R$). Thus, it suffices to establish a bound for the gradient of \( v^{\pi} \) independent of \( \pi \) and  \( R \), and by Lemma~\ref{Lem:UniOpt} $\hat{\alpha}^{\pi}_i$ is also bounded independent of $\pi$ and $R$. Choosing \( R \) large enough, we can deduce that \( v^{\pi} \) satisfies the system \eqref{eq:Nash}, and the corresponding optimal control $\hat{\alpha}^i$ are bounded. We do this in several steps using standard strategy as e.g. in \cite[Proposition 6.26]{ref3}. 

	\begin{proposition}\label{prop:1}
		Independent of \( \pi \) and \( R \), the value functions of the truncated Nash system $v_i^{\pi}$ are 
		bounded and satisfy
		\begin{equation}\label{eq:f}
			|v^{\pi}_{i}(t, \mathbf{x}, i_{0})-v^{\pi}_{i}(t^{\prime}, \mathbf{x}^{\prime}, i_{0})| \leq C(|t^{\prime}-t|^{\gamma / 2}+|\mathbf{x}^{\prime}-\mathbf{x}|^{\gamma}),
		\end{equation}
		for all \( (t, \mathbf{x}),(t^{\prime}, \mathbf{x}^{\prime}) \in[0, T] \times(\mathbb{R}^{d})^{N} \), $i_0 \in \cs$, for some constant \( C \) and  \( \gamma \in(0,1)\). 
\end{proposition}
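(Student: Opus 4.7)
The plan follows the general strategy of \cite[Proposition 6.26]{ref3}, adapted to the regime-switching setting. There are two tasks: the uniform $L^\infty$ estimate and the Hölder regularity, both with constants independent of the truncation parameters $\pi$ and $R$.

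For the uniform boundedness, I would apply It\^o's formula to $v_i^{\pi}(s, \tilde{\bm X}_s, I_s)$ along the closed-loop state trajectory $\tilde{\bm X}$ driven by the truncated equilibrium feedbacks $\hat{\alpha}_k^{\pi}$, noting that the Brownian increments and the jump martingales (arising via \eqref{eq:M}--\eqref{eq:[M],<M>}) vanish in expectation. Combined with the truncated HJB~\eqref{eq:t-Nash} this yields
\begin{equation*}
v_i^{\pi}(t, \mathbf{x}, i_0) = \E\Big[g^i(\tilde{\bm X}_T, I_T) + \int_t^T f^i(s, \tilde{\bm X}_s, \hat{\alpha}_i^{\pi}, I_{s-})\, ds\Big].
\end{equation*}
Repeating the computation with player $i$ switched to the zero control while the remaining players stay at $\hat{\alpha}_k^{\pi}$ gives a second representation in which the integrand involves the quantity $f^i(\cdot,\hat{\alpha}_i^{\pi},\cdot) - \nabla_i v_i^{\pi}\cdot b_2^i \hat{\alpha}_i^{\pi}$. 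Using Lemma~\ref{Lem:UniOpt}, the first-order optimality condition characterizing $\hat\alpha^i$, strong convexity from \ref{hyp:H2}, and the growth/convexity inequality $L' < \lambda$ from \ref{hyp:H3}, this integrand can be controlled up to a universal constant by $|f^i(s,\cdot,0,\cdot)| + |g^i| \leq 2L$ (via \eqref{eq:ff} and \eqref{eq:gg}), producing $|v_i^{\pi}| \leq C_0$ independent of $\pi, R$.

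For the Hölder regularity, the uniform $L^\infty$ bound from the previous step, together with uniform ellipticity of $\sigma^i(\sigma^i)^\intercal$ from \ref{hyp:H6}, the Lipschitz regularity of the coefficients from \ref{hyp:H1} and \ref{hyp:H5}, the bound $|\hat{\alpha}_i^{\pi}(t,\mathbf{x},p,i_0)| \leq C(1+|p|)$ from Lemma~\ref{Lem:UniOpt}, and the natural quadratic growth $|f^i| \leq L(1+|\hat\alpha_i^\pi|^2)$, put the truncated Nash system into the framework of quasilinear uniformly parabolic systems treated in \cite[Chapter VII]{ref8}. The interior Hölder estimates of \cite[Theorem VII.7.1]{ref8} applied on expanding balls, followed by passage to the limit as in the proof of Lemma~\ref{Lem:BdCoef} and \cite[Proposition 3.3]{ma1994solving}, yield \eqref{eq:f} with constants $C$ and $\gamma \in (0,1)$ depending only on the hypotheses of the problem.

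The main obstacle is the uniform $L^\infty$ estimate: although the truncated system has bounded coefficients by construction, these bounds a priori grow with $R$, so the delicate step is to extract a bound whose constant is $R$-independent by combining strong convexity, the inequality $L' < \lambda$, and the first-order characterization of the Hamiltonian minimizer. The regime-switching jumps introduce extra terms in It\^o's formula but, being purely discontinuous square-integrable martingales by the construction in \eqref{eq:M}, they drop out upon taking expectations and do not fundamentally obstruct the argument.
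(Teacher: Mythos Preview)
Your boundedness argument (Step~1) is a legitimate alternative to the paper's approach. The paper instead absorbs the $z$-dependent part of $f^1$ into the drift via the finite-difference quotient $\delta f^1$, so that along the modified process $\tilde{\bm X}$ the backward equation for $Y^1=v_1^{\pi}(t,\tilde{\bm X}_t,I_t)$ has driver $-f^1(\cdot,\hat\alpha_1^{\pi}(\cdot,0,\cdot),\cdot)$, which is manifestly bounded. Your route---switching player $i$ to the zero control and then bounding the resulting integrand $f^i(\cdot,\hat\alpha_i,\cdot)-\hat\alpha_i\cdot\nabla_\alpha f^i(\cdot,\hat\alpha_i,\cdot)$ from below via strong convexity plus the strict inequality $L'<\lambda$---also works once one checks that $(\lambda-L')|\hat\alpha_i|^2$ dominates the linear remainder. (Your sign on the $b_2^i\hat\alpha_i^\pi$ term is flipped, and you should be explicit that the truncated optimizer $\hat\alpha_i^\pi$ coincides with $\hat\alpha_i$ where the bound matters, but these are repairable.)

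The H\"older step is where your proposal breaks down. Invoking \cite[Theorem~VII.7.1]{ref8} on the truncated system does \emph{not} deliver constants independent of $\pi$ and $R$: that theorem is an existence result, and the a~priori H\"older estimates behind it for coupled systems with quadratic gradient growth either require a smallness condition on the oscillation or depend on bounds for the lower-order coefficients, which here scale with $\|\nabla v^\pi\|_\infty$ and hence with~$R$. This is precisely why the paper (and \cite[Proposition~6.26]{ref3}, which you cite as the template) does \emph{not} appeal to LSU for this step. Instead it runs a probabilistic argument specific to the Nash structure: apply It\^o to $(Y_t^1)^2$ to obtain a uniform BMO bound on $\int Z_s^{1,1}\,dW_s^1$, then apply It\^o to $\exp(\eta Y_t^i)$ and choose $\eta$ large to propagate the BMO bound to all $\int Z_s^{i,i}\,dW_s^i$; the BMO property yields $L^r$ control on the Girsanov density, which transfers hitting probabilities of $\tilde{\bm X}$ to those of Brownian motion, and the Krylov--Safonov estimates then give H\"older regularity with constants tracking only the BMO norms---hence independent of $\pi,R$. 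This BMO/Girsanov machinery is the essential missing ingredient in your Step~2.
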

\begin{proof} 
We consider the following modified diffusion processes $\tilde{\bm{X}}_{t}=(\tilde{X}_{t}^{1},...,\tilde{X}_{t}^{N})$:
	\begin{align}\label{eq:b}
		d \tilde{X}_{t}^{1} =& [B_{t}^{1}+\delta f^{1}(t, \tilde{\bm X}_{t}, Z_{t}^{1,1},I_{t-})] d t +\Sigma_{t}^{1} dW_{t}^{1}, \nonumber\\
		d \tilde{X}_{t}^{i} =& B_{t}^{i} d t+\Sigma_{t}^{1} dW_{t}^{i}, \quad i \neq 1,
	\end{align}
	where $ \delta f^{1} :[0, T] \times(\mathbb{R}^{d})^{N} \times \mathbb{R}^{d} \times \mathcal{S} \rightarrow \mathbb{R}^{d}$ satisfies for $ i_{0} \in \mathcal{S}$,
	\begin{align*}
		f^{1}(t, \mathbf{x}, \hat{\alpha}^{\pi}_{1}(t, \mathbf{x}, z,i_{0}), i_{0})=f^{1}(t, \mathbf{x}, \hat{\alpha}^{\pi}_{1}(t, \mathbf{x}, 0,i_{0}), i_{0})+\delta f^{1}(t, \mathbf{x}, z, i_{0}) \cdot z
	\end{align*}
	and $ \delta f^{1} $ can be constructed by {denoting $z^{l+}=(0, \cdots, z_{l}, \cdots, z_{d})$:
	\begin{align}\label{eq:del-f}
		(\delta {f}^{1}(t, \mathbf{x}, z, i_{0}))_{l} =\frac{f^{1}(t, \mathbf{x}, \hat{\alpha}^{\pi}_{1}(t, \mathbf{x},z^{l+},i_{0}),i_{0})-f^{1}(t, \mathbf{x}, \hat{\alpha}^{\pi}_{1}(t, \mathbf{x},z^{(l+1)+},i_{0}),i_{0})}{z_{l}},
	\end{align}}
	if $ z_l \neq 0 $ and $ 0 $ otherwise. {Since $ f $ satisfies \ref{hyp:H3} and from Lemma~\ref{Lem:UniOpt} $ \hat{\alpha} $ (and consequently $ \hat{\alpha}^{\pi} $) is Lipschitz continuous in $ z $, we have from Mean Value Theorem that:
		\begin{equation*}
			| (\delta f^{1}(t, \mathbf{x}, z, i_{0}))_l | \leq C\frac{(1+|\tilde{\al}|)|z_l|}{|z_l|},
		\end{equation*}
	where $\tilde{\al}=\hat{\al}^\pi_1(t,\mathbf{x},z^{l+}+\lambda(z^{(l+1)+}-z^{l+}),i_0)$ for some $\la \in (0,1)$. By noting that $\hat{\al}$ has linear growth in $z$ from Lemma~\ref{Lem:UniOpt} and $|z^{l+}+\lambda(z^{(l+1)+}-z^{l+})| \leq |z|$ we have 	
}
	\begin{equation}\label{eq:d}
		| \delta f^{1}(t, \mathbf{x}, z, i_{0}) | \leq C(1+|z|).
	\end{equation}
	{Also note that due to \ref{hyp:H3} and Eq.~\eqref{eq:al-ub} we have that
	\beq\label{eq:f1-0-bd}
	 f^{1}(t, \tilde{\bm{X}}_{t}, \hat{\alpha}^{\pi}_{1}(t, \tilde{\bm{X}}_{t}, 0,I_{t-}),I_{t-}) \leq C, 
	 \eeq
	where $C$ is a constant independent of   $\pi$   and $R$.
	}
	
	\vspace{0.2in}
	\noindent
\emph{Step 1: Boundedness.}
	 Owing to \ref{hyp:H1} and \ref{hyp:H5} of Hypothesis~\ref{hyp:Nash}, SDE \eqref{eq:b} has a unique solution.
	Applying It\^o's formula, we get (see \cite[Sec 3.1]{ref11})
	\begin{align*}
		dY_{t}^{1}= &\{\partial_{t} v_{1}^{\pi}(t,\tilde{\bm{X}}_{t}, I_{t-})+\cg_i v^{\pi}(t,\tilde{\bm{X}}_{t}, I_{t-}) 
		+ Z_{t}^{1,1} \cdot \delta f^{1}(t, \tilde{\bm{X}}_{t}, Z_{t}^{1,1},I_{t-}) \} dt \\
		&+\sum_{j=1}^N Z_{t}^{1,j} \cdot \Sigma_{t}^{j} dW^{j}_t 
		+ \sum_{i_{0}, j_{0} \in \mathcal{S}}(v_{1}^{\pi}(t, \tilde{\bm{X}}_{t}, j_{0})-v_{1}^{\pi}(t, \tilde{\bm{X}}_{t}, i_{0})) d M_{i_{0}j_{0}}(t),
	\end{align*}
	{where $M_{i_0 j_0}(t)$ is defined in \ref{not:Mt}.} Using the fact that $v^{\pi}$ satisfies the truncated Nash system {and noting \eqref{eq:del-f}} we get
	\begin{align} \label{eq:e}
		d Y_{t}^{1}=&-f^{1}(t, \tilde{\bm{X}}_{t}, \hat{\alpha}^{\pi}_{1}(t, \tilde{\bm{X}}_{t}, 0,I_{t-}),I_{t-}) d t\nonumber\\
		&+\sum_{i_{0}, j_{0} \in \mathcal{S}}(v_{1}^{\pi}(t, \tilde{\bm{X}}_{t}, j_{0})-v_{1}^{\pi}(t, \tilde{\bm{X}}_{t}, i_{0})) d M_{i_{0}j_{0}}(t),
		+\sum_{j=1}^{N} Z_{t}^{1, j} \cdot \Sigma_{t}^{j} dW^{j}_t \\
		Y_{T}^{1}=&g^{1}(\tilde{\bm{X}}_{T},I_{T}). \nonumber
	\end{align}
	for $ t \in [t_{0}, T] $, where $ t_{0} $ is the initial time. Recall the bound given by \eqref{eq:f1-0-bd}. In addition, by Hypothesis~\ref{hyp:H7} {$ g^{1}(\tilde{\bm{X}}_{T}, I_T) $} is bounded independent of $ \pi $ and $ R $. Recall from Lemma~\ref{Lem:BdCoef} that $ v^{\pi}(t,\mathbf{x},i_{0}) $ along with its gradient is bounded (with the bound possibly depending on $ \pi $ and $ R $). Therefore, since $ M_{i_{0}j_{0}} $ is a purely discontinuous square-integrable martingale, so is the stochastic integral with respect to $M_{i_0 j_0}$ in \eqref{eq:e}. A similar conclusion holds for the other stochastic integral thanks to Corollary~\ref{cor:sigma-bd}. Thus, the expectation of $ Y_{t_0}^{1} $ is bounded independent of $\pi$ and $R$. By initializing at any $t_{0} \in [0,T]$ and choosing $ \tilde{\bm{X}}_{t_0} $ be any $ \mathbf{x} \in {\R}^{dN} $ and $I_{t_0}$ be any $i_0 \in \cs$, we deduce that $ v^{\pi}_{1} $ is bounded, independent of $ \pi $ and $ R $. Similar arguments give us that $ v_{i}^{\pi}, i \neq 1 $ are bounded independently of $\pi$ and $R$.  

\vspace{0.2in}
\noindent
\emph{Step 2: H\"older property.}
Consider the same diffusion processes in \eqref{eq:b}. Similar to \eqref{eq:e} we have for $ i \neq 1 $
	\begin{align*}
		d Y_{t}^{i}=&-f^{i}(t, \tilde{\bm{X}}_{t}, A_{t}^{\pi,i},I_{t-}) d t
		+ Z_{t}^{i,1} \cdot \delta f^{1}(t, \tilde{\bm{X}}_{t}, Z_{t}^{1,1},I_{t-} )dt\nonumber\\
		&+\sum_{j=1}^{N} Z_{t}^{i, j} \cdot \Sigma_{t}^{j} dW^{j}_t+\sum_{i_{0}, j_{0} \in \mathcal{S}}(v_{i}^{\pi}(t, \tilde{\bm{X}}_{t}, j_{0})-v_{i}^{\pi}(t, \tilde{\bm{X}}_{t}, i_{0})) d M_{i_{0}j_{0}}(t),\\
		Y_{T}^{i} =& g^{i}(\tilde{\bm{X}}_{T},I_{T}).\nonumber
	\end{align*}
	Let $ F $ be a $ \mathcal{C}^2 $ real function. Using the generalized It\^o's formula (cf. \cite[Theorem 33]{Protter}) we have:
	$$
		F(Y^{i}_{t})- F(Y^{i}_{0}) =\int_{0^{+}}^{t} F^{\prime}(Y^{i}_{s-}) d Y^{i}_{s}
		+\frac{1}{2} \int_{0^{+}}^{t} F^{\prime \prime}(Y^{i}_{s-}) d[Y^{i}]_{s}^{c} 
		+\sum_{0<s \leq t}\{F(Y^{i}_{s})-F(Y^{i}_{s-})-F^{\prime}(Y^{i}_{s-}) \Delta Y^{i}_{s}\}.
	$$
	From the definition of optional quadratic covariations (cf. \cite[Sec 3.1]{ref11}) we have the following orthogonality relation :
	\begin{align}\label{eq:mg-ortho}
		[W_{t}^{i}, W_{t}^{j}]=0 \text{ when } i \neq j, \quad [M_{i_{0} j_{0}}(t), W_{t}^{j}]=0, \nonumber \\
		[M_{i_{0} j_{0}}(t), M_{p_{0} q_0}(t)]=0 \text{ when }(i_{0}, j_{0}) \neq (p_{0}, q_0).
	\end{align}
	Since $ [M_{i_{0}j_{0}}](s) $ is a pure jump process, $ [M_{i_{0}j_{0}}]^{c}(s)= 0 $. Consequently
	\[
	\int_{0^{+}}^{t} F^{\prime \prime}(Y^{i}_{s-}) d[Y^{i}]_{s}^{c}= \int_{0^{+}}^{t}F^{\prime \prime}(Y^{i}_{s-})\sum_{j=1}^{N}|(\Sigma_{s}^{j})^{\prime} Z_{s}^{i, j}|^{2} ds 
	\]
	Also from \eqref{eq:M}-\eqref{eq:[M],<M>}
	\begin{align*}
		\sum_{0<s \leq t} F^{\prime}(Y^{i}_{s-}) \Delta Y^{i}_{s} 
		=
		\sum_{i_{0}, j_{0} \in \mathcal{S}} \int_{0^{+}}^{t}F^{\prime}(Y^{i}_{s-})(\delta v_{i}^{\pi})_{j_0,i_0}(s, \tilde{\bm{X}}_{t}, \cdot) dM_{i_{0}j_{0}}(s)\\
		+\sum_{i_{0}, j_{0} \in \mathcal{S}} \int_{0^{+}}^{t}F^{\prime}(Y^{i}_{s-})(\delta v_{i}^{\pi})_{j_0,i_0}(s, \tilde{\bm{X}}_{t}, \cdot)q_{i_{0}j_{0}}\mathds{1}_{\{I_{s-}=i_{0}\}} ds.
	\end{align*}
	Similarly
	\begin{align*}
		\sum_{0<s \leq t}(F(Y^{i}_{s})-F(Y^{i}_{s-}))=
		\sum_{i_{0}, j_{0} \in \mathcal{S}} \int_{0^{+}}^{t} ( F(Y^{i}_s) - F(Y^{i}_{s-}) )(dM_{i_{0}j_{0}}(s)+q_{i_{0}j_{0}}\mathds{1}_{\{I_{s-}=i_{0}\}} ds).
	\end{align*}
	Now use the notation for $i \neq 1$
	\begin{align*}
		\mathcal{L}^{i}_{t}=\frac{1}{2}F^{\prime \prime}(Y^{i}_{t-})\sum_{j=1}^{N}|(\Sigma_{t}^{j})^{\prime} Z_{t}^{i, j}|^{2}
		+F^{\prime}(Y^{i}_{t-})\{-f^{i}(t, \tilde{\bm{X}}_{t}, A_{t}^{\pi,i},I_{t-})+ Z_{t}^{i,1} \delta f^{1}(t, \tilde{\bm{X}}_{t}, Z_{t}^{1,1},I_{t-} ) \\
		-\sum_{i_{0}, j_{0} \in \mathcal{S}}(\delta v_{i}^{\pi})_{j_0,i_0}(t, \tilde{\bm{X}}_{t}, \cdot)q_{i_{0}j_{0}}\mathds{1}_{\{I_{t-}=i_{0}\}}\} 
		+\sum_{i_{0}, j_{0} \in \mathcal{S}} (F(Y^{i}_{t})-F(Y^{i}_{t-}))q_{i_{0}j_{0}}\mathds{1}_{\{I_{t-}=i_{0}\}}  ,
	\end{align*}
	and for $i=1$
	\begin{align*}
		\mathcal{L}_{t}^{1}&= \frac{1}{2}F^{\prime \prime}(Y^{1}_{t-})\sum_{j=1}^{N}|(\Sigma_{t}^{j})^{\prime} Z_{t}^{1, j}|^{2}
		-F^{\prime}(Y^{1}_{t-})\{f^{1}(t, \tilde{\bm{X}}_{t}, \hat{\alpha}^{\pi}_{1}(t, \tilde{\bm{X}}_{t}, 0,I_{t-}),I_{t-})\vphantom{\sum_{i_{0}, j_{0} \in \mathcal{S}}}\\
		&+\sum_{i_{0}, j_{0} \in \mathcal{S}}(\delta v_{1}^{\pi})_{j_0,i_0}(t, \tilde{\bm{X}}_{t}, \cdot)q_{i_{0}j_{0}}\mathds{1}_{\{I_{t-}=i_{0}\}}\}
		+\sum_{i_{0}, j_{0} \in \mathcal{S}} (F(Y^{1}_{t})-F(Y^{1}_{t-}))q_{i_{0}j_{0}}\mathds{1}_{\{I_{t-}=i_{0}\}}.  
	\end{align*}
	Thus the It\^o's formula becomes: for all $i$
	\begin{multline}\label{eq:Ito-F}
		F(Y^{i}_{t})- F(Y^{i}_{0}) =
		\int_{0^{+}}^{t}\mathcal{L}^{i}_{s} ds + \int_{0^{+}}^{t}F^{\prime}(Y^{i}_{s-})\sum_{j=1}^{N} Z_{s}^{i, j} \cdot \Sigma_{s}^{j} dW^{j}_{s} \\
		+\sum_{i_{0}, j_{0} \in \mathcal{S}} \int_{0^{+}}^{t}(F(Y^{i}_{s})-F(Y^{i}_{s-}))dM_{i_{0}j_{0}}(s).
	\end{multline}
	We now apply It\^o's formula to $ F(Y_{t}^{1})=(Y_{t}^{1})^{2} $. In the following, we use $C$ to denote a generic constant that may change from step to step. Since from \emph{Step 1} $ v^{\pi}(t,\tilde{\bm{X}}_{t}, i_{0}) $ can be bounded independent of $ \pi $ and $ R $, we have by noting \eqref{eq:f1-0-bd}
	\begin{align*}
		d((Y_{t}^{1})^{2}) \geq ( {-C} -C Y_{t-}^{1}+\sum_{j=1}^{N}|(\Sigma_{t}^{j})^{\prime} Z_{t}^{1, j}|^{2}-
		\sum_{i_{0}, j_{0} \in \mathcal{S}} (Y_{t-}^{1})^{2} q_{i_{0}j_{0}}\mathds{1}_{\{I_{t-}=i_{0}\}} )dt\\
		+2Y_{t-}^{1}\sum_{j=1}^{N} Z_{t}^{1, j} \cdot \Sigma_{t}^{j} dW^{j}_{t}
		+\sum_{i_{0}, j_{0} \in \mathcal{S}} ((Y^{1}_{t})^{2}-(Y^{1}_{t-})^{2})dM_{i_{0}j_{0}}(t).
	\end{align*}
	{Note that from Hypothesis~\ref{hyp:H6}, $|(\Sigma_t^1)' Z_t^{1,1}|^2 \geq \nu_1 |Z_t^{1,1}|^2$.}
	Since $ Y_{t}^{1} $ is bounded independent of $ \pi $ and $ R $, {by taking expectation} we conclude that for a stopping time $ \tau \in [t_0,T]$
	\begin{equation}\label{eq:c}
		\mathbb{E}\lc\int_{\tau}^{T}|Z_{s}^{1,1}|^{2} d s \mid \mathcal{F}_{\tau}\rc \leq C,
	\end{equation}
	which means that the martingale $\{ {\int_{t_{0}}^{t} Z_{s}^{1,1} dW_{s}^{1}}; {t_{0} \leq t \leq T} \}$ is of Bounded Mean Oscillation (BMO).
	For $ i \neq 1 $, apply It\^o's formula to $ F(Y_{t}^{i})=\exp(\eta Y_{t}^{i}) $. From \eqref{eq:ff} and \eqref{eq:al-ub} $f^{i}$ has quadratic growth in $z$. {Furthermore since $\delta f^{1}$ has linear growth in $z$ (cf. \eqref{eq:d}) we have $|Z_t^{i,1} \der f^1(t, \tilde{\bm{X}}_t, Z_t^{1,1}, I_{t-})| \gtrsim - \lln \lla Z_t^{i,1}, Z_t^{1,1} \rra \rrn \gtrsim -(|Z_t^{i,1}|^2 + |Z_t^{1,1}|^2 )$. Using these in \eqref{eq:Ito-F} we have}
	\begin{align*}
		d[\exp (\eta Y_{t}^{i})] \geq \exp (\eta Y_{t-}^{i})[\frac{\eta^{2}}{2} \sum_{j=1}^{N}|(\Sigma_{t}^{j})^{\prime} Z_{t}^{i, j}|^{2}
		-C \eta(1+|Z_{t}^{i, i}|^{2}+|Z_{t}^{i, 1}|^{2}+|Z_{t}^{1,1}|^{2})-C] d t \\
		+\eta \exp (\eta Y_{t-}^{i})\sum_{j=1}^{N} Z_{t}^{i, j} \cdot \Sigma_{t}^{j} dW^{j}_{t}
		+\sum_{i_{0}, j_{0} \in \mathcal{S}} (\exp(\eta Y^{i}_{t})-\exp(\eta Y^{i}_{t-}))dM_{i_{0}j_{0}}(t),
	\end{align*}
	where $C$ is a constant {independent of $\pi$ and $R$}. 
	\\
	{Recalling \ref{hyp:H6} of Hypothesis~\ref{hyp:Nash}, by choosing \( \eta \) large enough we have for a new constant \( c \) :
		\begin{align*}
			\frac{\eta^{2}}{2} \sum_{j=1}^{N}|(\Sigma_{t}^{j})^{\prime} Z_{t}^{i, j}|^{2}
			-C \eta(|Z_{t}^{i, i}|^{2}+|Z_{t}^{i, 1}|^{2}) &\geq \frac{\nu_1\eta^{2}}{2} \sum_{j=1}^{N}|Z_{t}^{i, j}|^{2}
			-C \eta(|Z_{t}^{i, i}|^{2}+|Z_{t}^{i, 1}|^{2})\\
			&\geq c\sum_{j=1}^{N}|Z_{t}^{i, j}|^{2}.
		\end{align*}
	Therefore
	\begin{align*}
		d[\exp (\eta Y_{t}^{i})] &\geq \exp (\eta Y_{t-}^{i})\lc c \sum_{j=1}^{N}|Z_{t}^{i, j}|^{2}
		-C\eta |Z_{t}^{1,1}|^{2}-C\eta \rc d t \\
		&+\eta \exp (\eta Y_{t-}^{i})\sum_{j=1}^{N} Z_{t}^{i, j} \cdot \Sigma_{t}^{j} dW^{j}_{t}
		+\sum_{i_{0}, j_{0} \in \mathcal{S}} (\exp(\eta Y^{i}_{t})-\exp(\eta Y^{i}_{t-}))dM_{i_{0}j_{0}}(t).
	\end{align*}
	Fixing the constant $\eta$ and} recalling \eqref{eq:c} together with the fact that $Y_{t}^{i}$ along with its gradients $Z_t^{i,j}$ are bounded (where the bound for $Z$ may depend on $\pi$ and $R$), we conclude that for any stopping time $ \tau \in [t_0, T] $:
	\[\mathbb{E}[\int_{\tau}^{T}|Z_{s}^{i,i}|^{2} d s \mid \mathcal{F}_{\tau}] \leq C.\]
	Therefore, all the martingales \( (\int_{t_{0}}^{t} Z_{s}^{i, i} dW_{s}^{i})_{t_{0} \leq t \leq T} \), for \( i=1, \cdots, N \), are BMO and, most importantly, their \( \mathrm{BMO} \) norms can be bounded independently of $R$, $\pi$  and the initial condition of \( \tilde{\bm{X}}_{t} \).
	We now return to \eqref{eq:b}. Letting:
	\[
	K_{t}^{1}=B_{t}^{1}+\delta f^{1}(t, \tilde{\bm{X}}_{t}, Z_{t}^{1,1},I_{t-}), \quad K_{t}^{i}=B_{t}^{i}, \quad i \neq 1,
	\]
	for \( t \in [t_{0}, T] \), we observe from \eqref{eq:d}, Hypothesis~(H1) and \eqref{eq:al-ub} that:
	\[
	|K_{t}^{i}| \leq C(1+|Z_{t}^{i, i}|),
	\]
	from which we deduce that the martingale \( (\sum_{i=1}^{N} \int_{t_{0}}^{t} K_{s}^{i} \cdot dW_{s}^{i})_{t_{0} \leq t \leq T} \) is also BMO and that its BMO norm is less than \( C \), the constant \( C \) being allowed to increase from line to line as long as it remains independent of \( R, \pi \) and the initial condition of \( \tilde{\bm{X}}_{t} \). {The BMO property implies that the Girsanov density:
	\begin{align*}
		\mathcal{E}_{t}=\exp (-\sum_{i=1}^{N} \int_{t_{0}}^{t} K_{s}^{i} (\Sigma_{s}^{i})^{-1} dW_{s}^{i}
		-\frac{1}{2} \sum_{i=1}^{N} \int_{t_{0}}^{t}|(\Sigma_{s}^{i})^{-1 \prime} K_{s}^{i}|^{2} d s), \quad t \in[t_{0}, T]
	\end{align*}
	satisfies:
	\(
	\mathbb{E}[(\mathcal{E}_{T})^{r}] \leq C,
	\)
	for any exponent \( r>1 \). Letting \( \mathbb{Q}=\mathcal{E}_{T} \cdot \mathbb{P} \),
	we deduce that, for any event \( E \in \mathcal{F} \),
	\( \mathbb{Q}(E) \leq C^{1 / r} \mathbb{P}(E)^{r /(r-1)}\), that is $\mathbb{P}(E) \geq C^{-(r-1) / r^{2}} \mathbb{Q}(E)^{(r-1) / r}$. 
	The parameters in the above estimate are independent of \( \pi, R \) and the initial conditions. The above lower bound says that we can control from below the probability that the process \( \tilde{\bm{X}} \) hits a given Borel subset in \( \mathbb{R}^{N d} \) in terms of the probability that a Brownian motion in \( \mathbb{R}^{N d} \) hits the same Borel subset. Thus using the representation formula for \( v^{\pi}_{ 1}(t, \tilde{\bm{X}}_t, i_0) \) (similar to  \eqref{eq:e}), the fact that \( g^{1}, f^{1} \) and \( \hat{\alpha}(\cdot, \cdot, 0, \cdot) \) {are bounded and smooth in \( x \)}, and the Krylov and Safonov estimates \cite{delarue-holder} for the Hölder regularity of the solutions of second-order parabolic PDEs with measurable coefficients, we have that \( v^{\pi}_{ 1} \) is Hölder continuous on \( [0, T] \times(\mathbb{R}^{d})^{N} \). A similar argument holds for \( v^{\pi}_{i} \) with \( i \neq 1 \). Thus we have for all $i$
	\begin{equation*}
		|v^{\pi}_{i}(t, \mathbf{x}, i_{0})-v^{\pi}_{i}(t^{\prime}, \mathbf{x}^{\prime}, i_{0})| \leq C(|t^{\prime}-t|^{\gamma / 2}+|\mathbf{x}^{\prime}-\mathbf{x}|^{\gamma}),
	\end{equation*}
	for all \( (t, \mathbf{x}),(t^{\prime}, \mathbf{x}^{\prime}) \in[0, T] \times(\mathbb{R}^{d})^{N} \), for some constant \( C \) as above and for some exponent \( \gamma \in(0,1), \gamma \) being independent of \( \pi \) and \( R \). }
\end{proof}
We are now ready to show our desired result as discussed before in Section~\ref{sec:strategy}. The proof relies on Proposition~\ref{prop:1} and an additional lemma~\ref{pro:intZ}. 
Now we consider another diffusion process $\tilde{\bm{X}}_{t}=(\tilde{X}_{t}^1,...,\tilde{X}_{t}^N)$. Redefine for $1\leq i \leq N$:
\begin{equation}\label{eq:b'}
	\tilde{X}_{t}^{i}=\Sigma_{t}^{i} dW_{t}^{i}.
\end{equation}
{
By It\^o's formula
\begin{align}\label{eq:Y-ito}
	dY_t^i &=-\left(f^{i}(t, \tilde{\bm{X}}_{t}, A_{t}^{\pi, i},I_{t-})+ \sum_{j=1}^{N} Z_{t}^{i, j} \cdot B_{t}^{j}\right) dt+\sum_{j=1}^{N} Z_{t}^{i, j} \cdot \Sigma_{t}^{j} dW^{j}_t \nonumber\\
	&\hspace{1in}+\sum_{i_{0}, j_{0} \in \mathcal{S}}(v_{i}^{\pi}(t, \tilde{\bm{X}}_{t}, j_{0})-v_{i}^{\pi}(t, \tilde{\bm{X}}_{t}, i_{0})) d M_{i_{0}j_{0}}(t), \nonumber\\
	Y_{T}^{i} &= g^{i}(\tilde{\bm{X}}_{T},I_{T}).
\end{align}
}
Similar to before, we have the following It\^o's formula 
{
\begin{align}\label{eq:F-Y-Ito}
	F(t, Y^{i}_{t})- F(t_0, Y^{i}_{t_0}) = \int_{t_0^{+}}^{t}\frac{\partial}{\partial y} F(s, Y^{i}_{s-})\sum_{j=1}^{N} Z_{s}^{i, j} \cdot \Sigma_{s}^{j}dW^{j}_{s}+
	\int_{t_0^{+}}^{t}\mathcal{L}_{s}^{i} ds \nonumber \\
	+\sum_{i_{0}, j_{0} \in \mathcal{S}} \int_{t_0^{+}}^{t}(F(s,Y^{i}_{s})-F(s,Y^{i}_{s-}))dM_{i_{0}j_{0}}(s),
\end{align}
}
{
where
\begin{align*}
	\mathcal{L}_{t}^{i}=\frac{\partial}{\partial t}F(t,Y^i_{t-})-\frac{\partial}{\partial y}F(t,Y^{i}_{t-})G_{t}^{i}+\frac{1}{2}\frac{\partial^2}{\partial y^2}F(t,Y^{i}_{t-})\sum_{j=1}^{N}|(\Sigma_{t}^{j})^{\prime} Z_{t}^{i, j}|^{2} \\
	+
	\sum_{i_{0}, j_{0} \in \mathcal{S}} (F(t,Y^{i}_{t})-F(t,Y^{i}_{t-}))q_{i_{0}j_{0}}\mathds{1}_{\{I_{t-}=i_{0}\}}  ,
\end{align*}
}
and
\begin{align*}
	G_{t}^{i}= f^{i}(t, \tilde{\bm{X}}_{t}, A_{t}^{\pi, i},I_{t-})+ \sum_{j=1}^{N} Z_{t}^{i, j} \cdot B_{t}^{j}+
	\sum_{i_{0}, j_{0} \in \mathcal{S}}(v_{i}^{\pi}(t, \tilde{\bm{X}}_{t}, j_{0})-v_{i}^{\pi}(t, \tilde{\bm{X}}_{t}, i_{0}))q_{i_{0}j_{0}}\mathds{1}_{\{I_{t-}=i_{0}\}}.
\end{align*}
Denote $G_{t}=(G_{t}^{i})_{i=1,...,N}$. Since { $v_i^{\pi}$ are bounded}, $f^{i}$ has quadratic growth in $\alpha$, {$b^i$ has linear growth in $\al$}, and $\alpha$ has linear growth in $z$, we have using Cauchy–Schwarz inequality
\beq\label{eq:G-Z-bnd}
|\bm{G}_{t}|={\lp \sum_{i=1}^{N}|G_{t}^{i}|^2 \rp }^{\sfrac{1}{2}} \leq C(1+|\bm{Z}_{t}|^{2}),
\eeq
where the constant $ C $ is independent of $ \pi $ and $ R $. 

We now prove a result which will be useful in the sequel to bound $\bm{Z}$.
\begin{lemma}\label{pro:intZ}
	Consider the diffusion processes \eqref{eq:b'} and any $t_0 \in [0,T]$. Then there exists a constant \( \varrho>0 \), independent of \( R, \pi \) and the initial condition of \( \tilde{\bm{X}} \), such that:
	\[
	\mathbb{E}\lc \int_{t_{0}}^{\tau} \frac{|\bm{Z}_{s}|^{2}}{(s-t_{0})^{\beta}} d s\rc \leq C_{\varrho},
	\]
	where \( \beta=\gamma / 4 \) and \( \tau \) is the first hitting time:
	\begin{equation}\label{tau}
		\tau=\inf \{t \geq t_{0}:|\tilde{\bm{X}}_{t}-\tilde{\bm{X}}_{t_{0}}| \geq \varrho\} \wedge(t_{0}+\varrho^{2}) \wedge T ,
	\end{equation}
	and the constant \( C_{\varrho} \) being also independent of \( \pi, R \) and the initial condition of the process \( \tilde{\bm{X}} \).
\end{lemma}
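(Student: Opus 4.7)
My plan is to combine an Itô-based bound on $\mathbb{E}\int_{t_0}^{\cdot}|\bm Z|^2\,ds$ over shrinking intervals with a dyadic splitting of $[t_0,\tau]$, so that the polynomial-in-$k$ decay of the $L^2$-norm of $\bm Z$ on $[t_0,\tau_k]$ beats the blow-up of $(s-t_0)^{-\beta}$ near $t_0$. The key device for handling the regime-switching noise is to center not at $Y^i_{t_0}$ but at $v_i^\pi(t_0,\tilde{\bm X}_{t_0},I_t)$: with $\tilde Y^i_t:=Y^i_t-v_i^\pi(t_0,\tilde{\bm X}_{t_0},I_t)$ one has $\tilde Y^i_{t_0}=0$ and, since the Hölder estimate \eqref{eq:f} holds at \emph{fixed} $i_0$ with constants independent of $\pi,R$,
\[
|\tilde Y^i_t|\;\le\;C\bigl(|t-t_0|^{\gamma/2}+|\tilde{\bm X}_t-\tilde{\bm X}_{t_0}|^{\gamma}\bigr)\;\le\;C\varrho^\gamma\quad\text{a.s.\ on }[t_0,\tau],
\]
and likewise $|\Delta\tilde Y^i_t|\le 2C\varrho^\gamma$; without this choice a single jump of $I$ would produce an $O(1)$ change in $Y^i-Y^i_{t_0}$ and defeat any such Hölder-type bound.

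Set $\tau_k:=(t_0+2^{-k}\varrho^2)\wedge\tau$ for $k\ge 0$ (so $\tau_0=\tau$). Apply Itô's formula to $|\tilde Y^i_t|^2$ on $[t_0,\tau_k]$. The continuous quadratic variation of $\tilde Y^i$ coincides with that of $Y^i$ (the correction $v_i^\pi(t_0,\tilde{\bm X}_{t_0},I_\cdot)$ being pure-jump) and, by \ref{hyp:H6}, is bounded below by $\nu_1\sum_j|Z^{i,j}_s|^2\,ds$; the drift $D^i_s$ of $\tilde Y^i$ is the drift in \eqref{eq:Y-ito} plus the predictable compensator of the jumps of $v_i^\pi(t_0,\tilde{\bm X}_{t_0},I_\cdot)$, and satisfies $|D^i_s|\le C(1+|\bm Z_s|^2)$ uniformly in $\pi,R$ thanks to \ref{hyp:H1}, \ref{hyp:H3}, \eqref{eq:al-ub}, the inequality $|\hat\alpha^\pi_i|\le 2|\hat\alpha^i|$, and the $\pi,R$-uniform bound on $v^\pi$ from Proposition~\ref{prop:1}. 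Since $Z$ is a priori bounded for fixed $\pi,R$ the stochastic integrals are true martingales, so taking expectation and discarding the non-negative sum of squared jumps yields, after summing over $i$,
\[
\nu_1\,\mathbb{E}\Bigl[\int_{t_0}^{\tau_k}|\bm Z_s|^2\,ds\Bigr]\;\le\;\sum_i\mathbb{E}\bigl[|\tilde Y^i_{\tau_k}|^2\bigr]+2\sum_i\mathbb{E}\Bigl[\int_{t_0}^{\tau_k}|\tilde Y^i_s|\,(1+|\bm Z_s|^2)\,ds\Bigr].
\]
By BDG applied to $\tilde{\bm X}_\cdot-\tilde{\bm X}_{t_0}$ (whose diffusion is bounded by Corollary~\ref{cor:sigma-bd}) together with Jensen, $\mathbb{E}\bigl[\sup_{s\le\tau_k}|\tilde{\bm X}_s-\tilde{\bm X}_{t_0}|^{2\gamma}\bigr]\le C(2^{-k}\varrho^2)^\gamma$, so $\mathbb{E}\bigl[\sup_{[t_0,\tau_k]}|\tilde Y^i_s|^2\bigr]\le C(2^{-k}\varrho^2)^\gamma$. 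Pulling out the supremum and applying Cauchy–Schwarz,
\[
\mathbb{E}\Bigl[\int_{t_0}^{\tau_k}|\tilde Y^i_s|(1+|\bm Z_s|^2)\,ds\Bigr]\;\le\;\bigl(\mathbb{E}\sup_{[t_0,\tau_k]}|\tilde Y^i|^2\bigr)^{1/2}\Bigl(\mathbb{E}\Bigl(\int_{t_0}^{T}(1+|\bm Z_s|^2)\,ds\Bigr)^{\!2}\Bigr)^{\!1/2},
\]
where the second factor is bounded by a $\pi,R$-independent constant thanks to the BMO bound on $\int Z^{i,i}\,dW^i$ proved in Step~2 of Proposition~\ref{prop:1} (together with the exponential estimate there, which yields the analogous bound for the full $\int|\bm Z|^2$) and the John–Nirenberg inequality. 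Combining,
\[
\mathbb{E}\Bigl[\int_{t_0}^{\tau_k}|\bm Z_s|^2\,ds\Bigr]\;\le\;C\,2^{-k\gamma/2}\,\varrho^{\gamma},\qquad k\ge 0.
\]

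To conclude, dyadically decompose $[t_0,\tau]=\bigcup_{k\ge 0}[\tau_{k+1},\tau_k]$. On $[\tau_{k+1},\tau_k]$ one has $s-t_0\ge 2^{-(k+1)}\varrho^2$, hence
\[
\mathbb{E}\Bigl[\int_{\tau_{k+1}}^{\tau_k}\frac{|\bm Z_s|^2}{(s-t_0)^\beta}\,ds\Bigr]\;\le\;2^{(k+1)\beta}\varrho^{-2\beta}\,\mathbb{E}\Bigl[\int_{t_0}^{\tau_k}|\bm Z_s|^2\,ds\Bigr]\;\le\;C\,2^{k(\beta-\gamma/2)}\,\varrho^{\gamma-2\beta}.
\]
The choice $\beta=\gamma/4$ gives $\beta-\gamma/2=-\gamma/4<0$, so the geometric series in $k$ converges and summing yields $\mathbb{E}\int_{t_0}^{\tau}|\bm Z_s|^2(s-t_0)^{-\beta}\,ds\le C\varrho^{\gamma/2}=:C_\varrho$, which is the claim. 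The main obstacle is keeping every constant independent of the truncation parameters $\pi,R$, and this is handled at three different places: (i) the Hölder estimate \eqref{eq:f} of Proposition~\ref{prop:1} is $\pi,R$-uniform, (ii) the BMO norm of $\int Z^{i,i}\,dW^i$ (and hence the $L^p$-norm of $\int|\bm Z|^2$) is $\pi,R$-uniform via the exponential argument of Proposition~\ref{prop:1}, and (iii) $|\hat\alpha^\pi_i|\le 2|\hat\alpha^i|\le C(1+|p|)$ by Lemma~\ref{Lem:UniOpt}; the threshold $\varrho$ then depends only on $\nu_1,\gamma,T$ and the structural constants in Hypothesis~\ref{hyp:Nash}.
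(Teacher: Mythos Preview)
Your strategy differs from the paper's and is arguably cleaner: instead of slicing $[t_0,\tau]$ at the jump times of $I$ (so that on each piece $Y^i_t-Y^i_{\tau_k}$ inherits the H\"older bound \eqref{eq:f} because $I$ is constant there) and then summing using $\mathbb{E}[J]<\infty$, you absorb the jumps by centering at $v^\pi_i(t_0,\tilde{\bm X}_{t_0},I_t)$, which gives the pathwise bound $|\tilde Y^i_t|\le C\varrho^\gamma$ on $[t_0,\tau]$, and then perform a deterministic dyadic decomposition. The centering idea and the dyadic summation are both correct.

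There is, however, a genuine gap in your invocation of the BMO bound. Step~2 of Proposition~\ref{prop:1} proves the BMO estimate for $Z^{i,j}_t=\nabla_j v^\pi_i(t,\tilde{\bm X}_t,I_{t-})$ where $\tilde{\bm X}$ is the \emph{drifted} process \eqref{eq:b}; the present lemma concerns the \emph{driftless} process \eqref{eq:b'}. The exponential argument of Step~2 does not carry over directly: under \eqref{eq:b'} the driver of $Y^i$ is $G^i=f^i+\sum_j Z^{i,j}\!\cdot B^j+\text{(bounded)}$, and since $B^j$ depends on $Z^{j,j}$ this has quadratic growth in the \emph{full} $\bm Z$, not just in $Z^{i,\cdot}$, which defeats the coefficient comparison after choosing $\eta$ large. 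One could transfer the $L^p$ bound from \eqref{eq:b} to \eqref{eq:b'} via Girsanov, using the reverse-H\"older moments of the density established at the end of Step~2, but this is a nontrivial additional step you have not supplied.

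The cleaner fix is to bypass BMO entirely. You already have the pathwise bound $|\tilde Y^i_s|\le C\varrho^\gamma$ on $[t_0,\tau]\supset[t_0,\tau_k]$; insert it directly to obtain
\[
\nu_1\,\mathbb{E}\!\int_{t_0}^{\tau_k}\!|\bm Z_s|^2\,ds\;\le\;\sum_i\mathbb{E}|\tilde Y^i_{\tau_k}|^2+2CN\varrho^\gamma\,\mathbb{E}\!\int_{t_0}^{\tau_k}\!(1+|\bm Z_s|^2)\,ds,
\]
choose $\varrho$ so that $2CN\varrho^\gamma<\nu_1/2$, and absorb the $|\bm Z|^2$ term into the left side. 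Since $\tau_k-t_0\le 2^{-k}\varrho^2$ and $\mathbb{E}|\tilde Y^i_{\tau_k}|^2\le C(2^{-k}\varrho^2)^\gamma$, this yields
\[
\mathbb{E}\!\int_{t_0}^{\tau_k}\!|\bm Z_s|^2\,ds\;\le\;C(2^{-k}\varrho^2)^\gamma+C\,2^{-k}\varrho^{\gamma+2},
\]
and both contributions are summable after multiplication by $2^{k\beta}\varrho^{-2\beta}$ with $\beta=\gamma/4$ (the exponents of $2$ are $\beta-\gamma<0$ and $\beta-1<0$). This is exactly the self-improving mechanism the paper uses---there with the weight $(s-t_0)^{-\beta}$ built into the test function $|\bm Y_t-\bm Y_{t_0}|^2/(\varepsilon+(t-t_0)^\beta)$---and it makes the detour through BMO and John--Nirenberg unnecessary.
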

\begin{proof} Denote the jump times (after $t_0$) of the Markov chain $I_{s}$ as $\Gamma=(\Gamma_{1}, ..., \Gamma_{J}, ...)$, where $J$ is the last jump time before $\tau$, that is $\Gamma_{J} \leq \tau < \Gamma_{J+1} $. In addition define the stopping times $ \tau_{i}:=\tau \wedge \Gamma_{i} $ for $ i \geq 1 $ and $ \tau_{0}:=t_{0} $. Note that by the definition of $\tau$ in \eqref{tau} we have that $\tau_i \leq t_0 + \varrho^2$. In order to prove the proposition, we first show that 
	\[
	\mathbb{E}\lc\int_{t_{0}}^{\tau_{1}} \frac{|\bm{Z}_{s}|^{2}}{(s-t_{0})^{\beta}} d s\rc \leq C_{\varrho}.
	\] 
	For a given \( \varepsilon>0 \), we consider the process \( F(t, \bm{Y}_{t})=(|\bm{Y}_{t}-\bm{Y}_{t_{0}}|^{2} /(\varepsilon+(t-t_{0})^{\beta}))_{t_{0} \leq t \leq \tau_{1}} \), where \( \bm{Y}=(Y^{1}, \cdots, Y^{N}) \). By the It\^o's formula \eqref{eq:F-Y-Ito}, we obtain: 
	{
	\begin{align}\label{eq:F(t,Yt)}
		&\frac{|\bm{Y}_{\tau_{1}}-\bm{Y}_{t_{0}}|^{2}}{\varepsilon+(\tau_{1}-t_{0})^{\beta}}=\sum_{i=1}^{N}F(\tau_1, Y_{\tau_1}^{i})\nonumber\\
		&=-2\int_{t_{0}}^{\tau_{1}} \frac{(\bm{Y}_{s-}-\bm{Y}_{t_{0}}) \cdot \bm{G}_{s}}{\varepsilon+(s-t_{0})^{\beta}} d s
		-{\beta}\int_{t_{0}}^{\tau_{1}} \frac{|\bm{Y}_{s-}-\bm{Y}_{t_{0}}|^{2}}{(s-t_{0})^{1-\beta}(\varepsilon+(s-t_{0})^{\beta})^{2}} d s\nonumber\\
		&+\sum_{i=1}^{N}\int_{t_{0}}^{\tau_{1}} \frac{\sum_{j=1}^{N}|(\Sigma_{t}^{j})^{\prime} Z_{t}^{i, j}|^{2}}{\varepsilon+(s-t_{0})^{\beta}} d s
		+\sum_{i=1}^{N}\int_{t_{0}}^{\tau_{1}} \frac{1}{(s-t_{0})^{\beta}}\sum_{i_{0}, j_{0} \in \mathcal{S}}  (F(s, Y_s^i) - F(s,Y_{s-}^i)) q_{i_{0}j_{0}}\mathds{1}_{\{I_{s-}=i_{0}\}}d s\nonumber\\
		&+\sum_{i=1}^{N}\int_{t_{0}}^{\tau_{1}}\frac{\partial}{\partial y}F(s,Y^{i}_{s-})\sum_{j=1}^{N} Z_{s}^{i, j} \cdot \Sigma_{s}^{j} dW^{j}_{s}
		+\sum_{i=1}^{N}\sum_{i_{0}, j_{0} \in \mathcal{S}} \int_{t_{0}}^{\tau_{1}}(F(s,Y^{i}_{s})-F(s,Y^{i}_{s-}))dM_{i_{0}j_{0}}(s).
	\end{align}
Note that 
\begin{equation}\label{eq:Sigma'Z}
	\sum_{i=1}^N \sum_{j=1}^N\left|\left(\Sigma_t^j\right)^{\prime} Z_t^{i, j}\right|^2  =\sum_{i=1}^N \sum_{j=1}^N\left(Z_t^{i, j}\right)^{\prime} \Sigma_t^j\left(\Sigma_t^j\right)^{\prime} Z_t^{i j} \geq \sum_{i=1}^N \sum_{j=1}^N \nu_1\left|Z_t^{i, j}\right|^2,
\end{equation}
where the last step follows from Hypothesis~\ref{hyp:H6}. By taking expectation of \eqref{eq:F(t,Yt)} and using \eqref{eq:Sigma'Z} we get that there exists a constant $C$ such that}
	\begin{align*}
		\mathbb{E}\lc\int_{t_{0}}^{\tau_{1}} \frac{|\bm{Z}_{s}|^{2}}{\varepsilon+(s-t_{0})^{\beta}} d s\rc &\leq  C\lp\mathbb{E}\lc\frac{|\bm{Y}_{\tau_{1}}-\bm{Y}_{t_{0}}|^{2}}{\varepsilon+(\tau_{1}-t_{0})^{\beta}}\rc
		+2 \mathbb{E}\lc\lln\int_{t_{0}}^{\tau_{1}} \frac{(\bm{Y}_{s-}-\bm{Y}_{t_{0}}) \cdot \bm{G}_{s}}{\varepsilon+(s-t_{0})^{\beta}} d s\rrn\rc \right.\\
		&+\mathbb{E}\lc\lln\sum_{i=1}^{N}\int_{t_{0}}^{\tau_{1}} \frac{1}{(s-t_{0})^{\beta}}\sum_{i_{0}, j_{0} \in \mathcal{S}} (F(s,Y^{i}_{s})-F(s,Y^{i}_{s-})) q_{i_{0}j_{0}}\mathds{1}_{\{I_{s-}=i_{0}\}}   d s\rrn\rc \\
		&\left.+{\beta}\mathbb{E}\lc\int_{t_{0}}^{\tau_{1}} \frac{|\bm{Y}_{s-}-\bm{Y}_{t_{0}}|^{2}}{(s-t_{0})^{1-\beta}(\varepsilon+(s-t_{0})^{\beta})^{2}} d s\rc\rp .
	\end{align*}
	Notice $Y_{s}^{i}$ is bounded 
	{and that $2\beta <1$. Therefore}
	\beq\label{eq:term-3-bnd}
	\mathbb{E}\lc\lln\int_{t_{0}}^{\tau_{1}} \frac{1}{(s-t_{0})^{\beta}}\sum_{i_{0}, j_{0} \in \mathcal{S}} (F(s,Y^{i}_{s})-F(s,Y^{i}_{s-}))q_{i_{0}j_{0}}\mathds{1}_{\{I_{s-}=i_{0}\}}   d s\rrn\rc,
	\eeq
	is also bounded.
	Notice that from Proposition~\ref{prop:1} the Hölder property of \( v^{\pi}=(v^{\pi}_{1}, \cdots, v^{\pi}_{N}) \) holds for all \( t \in [t_{0}, \tau_{1}]\), and consequently recalling the definition of $\tau$ in \eqref{tau}:
	\begin{equation}\label{eq:Y-bnd-1}
		|\bm{Y}_{t}-\bm{Y}_{t_{0}}| \leq C \varrho^{\gamma} .
	\end{equation}
	Using \eqref{eq:G-Z-bnd} and \eqref{eq:Y-bnd-1} we have
	\beq\label{eq:term-2-bnd}
	\mathbb{E}\lc\lln\int_{t_{0}}^{\tau_{1}} \frac{(\bm{Y}_{s-}-\bm{Y}_{t_{0}}) \cdot \bm{G}_{s}}{\varepsilon+(s-t_{0})^{\beta}} d s\rc\rrn \leq C \varrho^{\gamma} \mathbb{E}\lc\int_{t_{0}}^{\tau_{1}} \frac{1+|\bm{Z}_{s}|^{2}}{\varepsilon+(s-t_{0})^{\beta}} d s\rc.
	\eeq
	Therefore by using \eqref{eq:term-3-bnd}, \eqref{eq:term-2-bnd} and the H\"older property of $Y_t$ via \eqref{eq:f} we get
	\begin{align*}
		\mathbb{E}\lc\int_{t_{0}}^{\tau_{1}} \frac{|\bm{Z}_{s}|^{2}}{\varepsilon+(s-t_{0})^{\beta}} d s\rc \leq& C\lp\mathbb{E}\lc\frac{(\tau_{1}-t_{0})^{\gamma}+|\tilde{\bm{X}}_{\tau_{1}}-\tilde{\bm{X}}_{t_{0}}|^{2 \gamma}}{\varepsilon+(\tau_{1}-t_{0})^{\beta}}\rc 
		+\varrho^{\gamma} \mathbb{E}\lc\int_{t_{0}}^{\tau_{1}} \frac{1+|\bm{Z}_{s}|^{2}}{\varepsilon+(s-t_{0})^{\beta}} d s\rc\right.\\
		&\left.+\mathbb{E}\lc\int_{t_{0}}^{\tau_{1}} \frac{(s-t_{0})^{\gamma}+|\tilde{\bm{X}}_{s}-\tilde{\bm{X}}_{t_{0}}|^{2 \gamma}}{(s-t_{0})^{1-\beta}(\varepsilon+(s-t_{0})^{\beta})^{2}} d s\rc\rp .
	\end{align*}
	Plugging in \( \beta=\gamma / 4 \), and using the elementary inequality $(a+b)^r \leq a^r + b^r$, when $0<r< 1$ with $a, b > 0$ we have
	\begin{align}\label{eq:t0-tau1-Z2}
		\mathbb{E}\lc\int_{t_{0}}^{\tau_{1}} \frac{|\bm{Z}_{s}|^{2}}{\varepsilon+(s-t_{0})^{\beta}} d s\rc  \leq C(1+\mathbb{E}\lc\frac{|\tilde{\bm{X}}_{\tau_{1}}-\tilde{\bm{X}}_{t_{0}}|^{2}}{\varepsilon^{1 / \gamma}+(\tau_{1}-t_{0})^{1 / 4}}\rc^{\gamma} 
		+\varrho^{\gamma} \mathbb{E}\lc\int_{t_{0}}^{\tau_{1}} \frac{1+|\bm{Z}_{s}|^{2}}{\varepsilon+(s-t_{0})^{\beta}} d s\rc \nonumber \\
		+\mathbb{E}\lc\int_{t_{0}}^{t_{0}+\varrho^{2}} \frac{1}{(s-t_{0})^{1-\gamma / 4}} \mathbb{E}\lc\frac{|\tilde{\bm{X}}_{s \wedge \tau_{1}}-\tilde{\bm{X}}_{t_{0}}|^{2}}{\varepsilon^{2 / \gamma}+(s \wedge \tau_{1}-t_{0})^{1 / 2}}\rc^{\gamma} d s\rc) .
	\end{align}
	{It is readily verified from It\^o's formula that:
	\[
	\mathbb{E}\lc\frac{|\tilde{\bm{X}}_{s \wedge \tau_{1}}-\tilde{\bm{X}}_{t_{0}}|^{2}}{\varepsilon^{2 / \gamma}+(s \wedge \tau_{1}-t_{0})^{1 / 2}}\rc \leq C,
	\]
	with \( C \) independent of \( \varepsilon \) and of the initial condition of \( \tilde{\bm{X}}_{t} \).} {Indeed, from It\^o's formula we have 
	\begin{align*}
		\frac{|\tilde{X}_{s \wedge \tau_{1}}^i-\tilde{X}_{t_{0}}^i|^{2}}{\varepsilon^{2 / \gamma}+(s \wedge \tau_{1}-t_{0})^{1 / 2}}=-\int_{t_0}^{s \wedge \tau_1} \frac{|\tilde{X}_{u}^i-\tilde{X}_{t_0}^i|^2}{2(u-t_0)^{1/2}(\varepsilon^{2 / \gamma}+(u-t_{0})^{1 / 2})^2} du  \\
		+\int_{t_0}^{s \wedge \tau_1}\frac{2\left(\tilde{X}_{u}^i-\tilde{X}_{t_0}^i\right)\Sigma_u^i}{\varepsilon^{2 / \gamma} +(u-t_{0})^{1 / 2}}dW_{u}^i+\int_{t_0}^{s \wedge \tau_1} \frac{\left|\Sigma_u^i\right|^2}{\varepsilon^{2 / \gamma}+(u-t_{0})^{1 / 2}} du.
	\end{align*}
	Taking expectation and dropping the negative term on the right hand side we have
	\begin{align}\label{eq:bnd-ito-1}
		\mathbb{E}\left[\frac{|\tilde{X}_{s \wedge \tau_{1}}^i-\tilde{X}_{t_{0}}^i|^{2}}{\varepsilon^{2 / \gamma}+(s \wedge \tau_{1}-t_{0})^{1 / 2}}\right] 
		\leq \mathbb{E}\left[\int_{t_0}^{{t_0 + \varrho^2}} \frac{\left|\Sigma_u^i\right|^2}{\varepsilon^{2 / \gamma}+(u-t_{0})^{1 / 2}} du\right]\leq C,
	\end{align}
	where we have used the bound for $\tau_1$ and the fact that $|\Sigma_u^i|^2$ is bounded due to Corollary~\ref{cor:sigma-bd}. Similarly one can show that 
	\begin{align}\label{eq:bnd-ito-2}
		\mathbb{E}\lc\frac{|\tilde{\bm{X}}_{s \wedge \tau_{1}}-\tilde{\bm{X}}_{t_{0}}|^{2}}{\varepsilon^{1 / \gamma}+( \tau_{1}-t_{0})^{1 / 4}}\rc \leq C.
	\end{align}
}
	Finally using \eqref{eq:bnd-ito-1}-\eqref{eq:bnd-ito-2} in \eqref{eq:t0-tau1-Z2}, and by choosing \( \varrho \) small enough and letting \( \varepsilon \searrow 0 \), we have that
	\beq\label{eq:int-C-rhobnd-1}
	\mathbb{E}[\int_{t_{0}}^{\tau_{1}} \frac{|\bm{Z}_{s}|^{2}}{(s-t_{0})^{\beta}} d s] < C_{\varrho}.
	\eeq
	Similarly, by the Hölder property of $v^{\pi}$ for $t \in [\tau_{k},\tau_{k+1}]$ one can show that for all $k \geq 1$
	\beq\label{eq:int-C-rhobnd-2}
	\mathbb{E}[\int_{\tau_{k}}^{\tau_{k+1}} \frac{|\bm{Z}_{s}|^{2}}{(s-t_{0})^{\beta}} d s] < C_{\varrho},
	\eeq
	where $C_{\varrho}$ in \eqref{eq:int-C-rhobnd-1}-\eqref{eq:int-C-rhobnd-2} are same.
	Then
	\begin{align*}
		\mathbb{E}\lc\int_{t_{0}}^{\tau} \frac{|\bm{Z}_{s}|^{2}}{(s-\tau_{k})^{\beta}} d s\rc=\mathbb{E}\lc\sum_{k=0}^{\infty}\int_{\tau_{k}}^{\tau_{k+1}} \frac{|\bm{Z}_{s}|^{2}}{(s-\tau_{k})^{\beta}} d s\rc 
		&= \mathbb{E}\lc\sum_{k=0}^{\infty}\mathbb{E}\lc \left. \int_{\tau_{k}}^{\tau_{k+1}} \frac{|\bm{Z}_{s}|^{2}}{(s-\tau_{k})^{\beta}} d s \right| \Gamma\rc\rc \\
		&\leq  \mathbb{E}[J]\cdot \max_k\mathbb{E}\lc\int_{\tau_{k}}^{\tau_{k+1}} \frac{|\bm{Z}_{s}|^{2}}{(s-\tau_{k})^{\beta}} d s\rc.
	\end{align*}
	Since the number of jumps in the finite horizon of a CTMC with finite rates is bounded in expectation, combined with \eqref{eq:int-C-rhobnd-1}-\eqref{eq:int-C-rhobnd-2}, we complete the proof.
\end{proof}
Using Lemma~\ref{pro:intZ} we are able to bound $\nabla_{\mathbf{x}} v^{\pi}$.
\begin{proposition}\label{prop:4}
	{Independent of $\pi$ and $R$, the gradient  $\nabla_{\mathbf{x}} v^{\pi}(t, \mathbf{x}, i_0)$ is bounded for all $(t, \mathbf{x}) \in [0,T] \times (\R^{d})^N$ and $i_0 \in \cs$.}
\end{proposition}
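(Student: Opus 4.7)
The aim is to obtain a bound on $|\nabla v^{\pi}(t_0,\mathbf{x}_0,i_0)|$ that is uniform in $\pi$ and $R$, for an arbitrary point $(t_0,\mathbf{x}_0,i_0) \in [0,T]\times\mathbb{R}^{Nd}\times\mathcal{S}$. The plan is to combine the uniform weighted integral estimate on $\bm{Z}$ from Lemma~\ref{pro:intZ} with the pointwise continuity of $\nabla v^{\pi}$ provided by Lemma~\ref{Lem:BdCoef}. To do this, I fix $(t_0,\mathbf{x}_0,i_0)$ and run the auxiliary diffusion $\tilde{\bm{X}}$ from \eqref{eq:b'} starting at $\mathbf{x}_0$ at time $t_0$, with $I_{t_0}=i_0$. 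With this initialization, $\bm{Z}_{t_0}=\nabla v^{\pi}(t_0,\mathbf{x}_0,i_0)$ is deterministic and equals precisely the quantity to be bounded.

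The key step is to compare $\mathbb{E}\bigl[\int_{t_0}^{\tau}|\bm{Z}_s|^{2}/(s-t_0)^{\beta}\,ds\bigr]$ to $|\bm{Z}_{t_0}|^{2}$. Since $\nabla v^{\pi}$ is continuous by Lemma~\ref{Lem:BdCoef}, $\tilde{\bm{X}}$ is continuous, and the probability that $I$ jumps in $[t_0,t_0+h]$ is $O(h)$ by Hypothesis~\ref{hyp:H4}, we have $\bm{Z}_s\to\bm{Z}_{t_0}$ almost surely as $s\to t_0^{+}$. Combined with the boundedness of $\bm{Z}$ on $[t_0,t_0+\varrho^{2}]$ (with a possibly $\pi,R$-dependent constant) and dominated convergence, this gives $\mathbb{E}[|\bm{Z}_s|^{2}]\to|\bm{Z}_{t_0}|^{2}$ as $s\to t_0^{+}$. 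Furthermore, $\int_{t_0}^{\tau}(s-t_0)^{-\beta}\,ds=(\tau-t_0)^{1-\beta}/(1-\beta)$ has expectation bounded below by a positive constant independent of $\pi,R$, since $\tau-t_0$ is bounded below with positive probability uniformly (using the non-degeneracy of $\sigma$ from Hypothesis~\ref{hyp:H6} and Corollary~\ref{cor:sigma-bd} together with standard exit-time estimates for the diffusion \eqref{eq:b'}). Putting these pieces together with the bound from Lemma~\ref{pro:intZ} yields $|\bm{Z}_{t_0}|^{2}\leq C$ with $C$ independent of $\pi,R$, and since $(t_0,\mathbf{x}_0,i_0)$ was arbitrary, the proposition follows.

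The main obstacle is that the convergence $\mathbb{E}[|\bm{Z}_s|^{2}]\to|\bm{Z}_{t_0}|^{2}$ does not come with a modulus that is uniform in $\pi$ and $R$, so one cannot immediately replace $|\bm{Z}_s|^{2}$ by a uniform multiple of $|\bm{Z}_{t_0}|^{2}$ on an interval of uniform length. I would handle this by a contradiction argument: if there exist sequences $(\pi_n,R_n,t_n,\mathbf{x}_n,i_n)$ with $M_n:=|\nabla v^{\pi_n}(t_n,\mathbf{x}_n,i_n)|\to\infty$, then applying the dominated-convergence/integral argument for each $n$ shows that the weighted integral in Lemma~\ref{pro:intZ} must inherit a lower bound of the form $cM_n^{2}$ along an appropriate subinterval, contradicting the uniform upper bound $C_\varrho$. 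An alternative, more direct route is to reuse the Krylov--Safonov-type estimate already invoked in Proposition~\ref{prop:1} to extract a uniform H\"older modulus for $\nabla v^{\pi}$, which would remove the continuity obstruction outright. Either way, the conclusion is a bound on $\nabla_{\mathbf{x}} v^{\pi}$ independent of $\pi$ and $R$, which is the content of the proposition and paves the way for identifying $v^{\pi}$ with a solution of \eqref{eq:Nash} for $R$ large enough in the subsequent proof of Theorem~\ref{Thm:3.1}.
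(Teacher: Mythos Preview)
Your argument has a genuine gap. The contradiction route does not close: suppose $M_n=|\nabla v^{\pi_n}(t_n,\mathbf{x}_n,i_n)|\to\infty$. To extract a lower bound of order $M_n^2$ from $\mathbb{E}\int_{t_n}^{\tau_n}|\bm{Z}_s|^2(s-t_n)^{-\beta}\,ds$ you need $|\bm{Z}_s|^2\gtrsim M_n^2$ on some interval $[t_n,t_n+h_n]$; but the length $h_n$ of that interval is governed by the modulus of continuity of $\nabla v^{\pi_n}$, which Lemma~\ref{Lem:BdCoef} only gives with a $\pi_n,R_n$-dependent constant. If that modulus degenerates, $h_n\to 0$ and the contribution $M_n^2 h_n^{1-\beta}$ need not blow up. Your alternative of invoking a Krylov--Safonov estimate for $\nabla v^{\pi}$ is not substantiated: Proposition~\ref{prop:1} gives H\"older regularity of $v^{\pi}$, not of its gradient, and upgrading to gradient H\"older bounds uniformly in $\pi,R$ would require a separate (and nontrivial) argument.

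The paper's proof avoids the continuity issue entirely by working with a \emph{representation formula}. One localizes with a cut-off $\eta$, writes $v^{\pi}_i(t_0,\mathbf{x},k_0)$ as an expectation against the (conditional) Gaussian transition density $p^{t,I}_{\mathbf{x},k_0}$ of the driftless diffusion \eqref{eq:b'}, and then differentiates this formula in the initial point $\mathbf{x}$. The derivative of the Gaussian kernel produces a factor $c/\sqrt{s-t_0}$; combined with the quadratic growth of the source term in $|\bm{Z}|$ and Cauchy--Schwarz, this yields an inequality of the form
\[
\sup_{\mathbf{x},i_0}|\nabla v^{\pi}(t_0,\mathbf{x},i_0)|^2 \leq C\Bigl(1+\int_{t_0}^{T}\frac{\sup_{\mathbf{y},i_0}|\nabla v^{\pi}(s,\mathbf{y},i_0)|^2}{(s-t_0)^{1-\beta}}\,ds\Bigr),
\]
where the weighted integral $\int(s-t_0)^{-\beta}\mathbb{E}[\mathds{1}_{\{s<\tau\}}|\bm{Z}_s|^2]\,ds$ from Lemma~\ref{pro:intZ} has been absorbed into $C$. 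A short-time/Gronwall iteration then closes the bound uniformly in $\pi,R$. The point is that Lemma~\ref{pro:intZ} is used not to control $|\bm{Z}_{t_0}|$ directly, but to control a cross term arising from differentiation of the representation; this is what makes the argument uniform.
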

\begin{proof}
	Consider again the diffusion processes \eqref{eq:b'} and recall the BSDE for $Y$ in \eqref{eq:Y-ito}. We fix \( \varrho \) as in Lemma~\ref{pro:intZ} and consider for some \( \mathbf{x}^{0} \in (\mathbb{R}^{d})^{N} \) a smooth cut-off function \( \eta:(\mathbb{R}^{d})^{N} \rightarrow \mathbb{R} \) equal to 1 on the open ball \( B_{\varrho / 2}(\mathbf{x}^{0}) \) and vanishing outside the open ball \( B_{\varrho}(\mathbf{x}^{0}) \). We then expand \( (Y_{t}^{i} \eta(\tilde{\bm{X}}_{t}))_{t_{0} \leq t \leq \varsigma} \) by Itô's formula. Let
	\beq\label{eq:zeta}
	 \varsigma=\inf \{t \geq t_{0}:|\tilde{\bm{X}}_{t}-\tilde{\bm{X}}_{t_{0}}| \geq \varrho\} \wedge T 
	 \eeq
	  we get 
		{
	\begin{align*}
		Y_\varsigma^i \eta(\tilde{\bm{X}}_\varsigma)-Y_{t_0}^i\eta(\tilde{\bm{X}}_{t_0})=&\int_{t_0}^\varsigma\eta(\tilde{\bm{X}}_s)dY_{s}^i+\sum_{j=1}^N\left(\int_{t_0}^\varsigma Y_{s-}^i\nabla_j\eta(\tilde{\bm{X}}_s)d\tilde{X}_s^j \right.\\
		&\left.+ \int_{t_0}^\varsigma \nabla_j \eta(\tilde{\bm{X}}_s) d[\tilde{X}^j, Y^i]_s^c+\frac{1}{2}\int_{t_0}^\varsigma \operatorname{tr}\lp Y_{s-}^i\nabla^2_{j,j}\eta(\tilde{\bm{X}}_s)d[\tilde{X}^j]_s\rp\right).
	\end{align*}
Thus}
	\begin{align}\label{eq:Y-eta}
		Y_{t_{0}}^{i} \eta(\tilde{\bm{X}}_{t_{0}})  &=\mathbb{E}[g^{i}(\tilde{\bm{X}}_{\varsigma},I_{\varsigma}) \eta(\tilde{\bm{X}}_{\varsigma})+\int_{t_{0}}^{\varsigma} \Psi^{i}(s, \tilde{\bm{X}}_{s}, Z^i_{s},I_{s-}) d s \mid \mathcal{F}_{t_{0}}] \nonumber\\
		&=\mathbb{E}[g^{i}(\tilde{\bm{X}}_{\varsigma},I_{\varsigma}) \eta(\tilde{\bm{X}}_{\varsigma})+\int_{t_{0}}^{T} \Psi^{i}(s, \tilde{\bm{X}}_{s \wedge \varsigma}, Z^i_{s \wedge \varsigma}, I_{s \wedge \varsigma-}) d s \mid \mathcal{F}_{t_{0}}],
	\end{align}
where 
	\begin{align*}
		\Psi^{i}(t,\tilde{\bm{X}}_t,Z^i_t,i_{0})=&
		\eta(\tilde{\bm{X}}_t)\left(f^{i}(t, \tilde{\bm{X}}_{t}, A_{t}^{\pi, i},I_{t-})+ \sum_{j=1}^{N} Z_{t}^{i, j} \cdot B_{t}^{j}\right)\\
		&-\sum_{j=1}^N\left(\nabla_j\eta(\tilde{\bm{X}}_t)\Sigma_t^j\left(\Sigma_t^j\right)^{\prime} Z_t^{i, j}+\frac{1}{2}\operatorname{tr}\lp Y_{t-}^i\nabla_{j,j}^2\eta(\tilde{\bm{X}}_t)\Sigma_t^j\lp\Sigma_t^j\rp^{\prime}\rp\right).
	\end{align*}
Above, we used the fact that, if \( \varsigma <T \), then \( |\tilde{\bm{X}}_{\varsigma}-\tilde{\bm{X}}_{t_{0}}|=\varrho \) and hence both \( Y_{\varsigma}^{i} \eta(\tilde{\bm{X}}_{\varsigma})=g^{i}(\tilde{\bm{X}}_{\varsigma}, I_{\varsigma}) \eta(\tilde{\bm{X}}_{\varsigma}) \) and \( \Psi^{i}(s, \tilde{\bm{X}}_{\varsigma}, Z^{{i}}_{\varsigma}, I_{\varsigma-})\) equal zero. Indeed, \( \Psi^{i}(t, \mathbf{x}, z,i_{0})=0 \) if \( |\mathbf{x}-\mathbf{x}^{0}| \geq \varrho \).  On the other hand, we have the usual It\^o formula when $\inf \{t \geq 0: |\tilde{\bm{X}}_t - \tilde{\bm{X}}_{t_0}| \geq \varrho\}$. 
	{By choosing smooth enough $\eta$ and thanks to \ref{hyp:H3}, Corollary~\ref{cor:sigma-bd} and Lemma~\eqref{eq:al-ub}, we have} that \( |\Psi^{i}(t, \mathbf{x}, z, i_{0})| \leq C(1+|z|^{2}) \), for \( t \in[0, T], \mathbf{x} \in(\mathbb{R}^{d})^{N} , z \in {\mathbb{R}^{d\times N}} \) and \( i_{0} \in \mathcal{S}\).
	
	For $\mathbf{y} \in B_{\varrho}(\mathbf{x}^{0})$, define $ p^{t, I}_{\mathbf{x},k_0}(\mathbf{y})= \mathbb{P}[\tilde{\bm{X}}_{(t_{0}+t) \wedge \varsigma} \in d\mathbf{y}\mid \tilde{\bm{X}}_{t_{0}}=\mathbf{x},I_{t_{0}}=k_0, I_{s},s \in [t_{0},t_{0}+t]] $ be the probability density of $ \tilde{\bm{X}}_{(t_{0}+t) \wedge \varsigma} $ conditioned on the initial state and the path of the Markov chain $ I_{s} $. {Denote $\mathbb{E}^{I,k_0}[\cdot]=\mathbb{E}[\cdot \mid I_{t_{0}}=k_0, I_{s},s \in [t_{0},T]]$.} 
	{Note that $p_{\mathbf{x}, k_0}^{t,I}(\mathbf{y})$ is the density of a multivariate normal distribution with variance ${\text{\text{diag}}(\E^{I,k_0} \int_{t_{0}}^{t}\Sigma^i_s\Sigma^{i \prime}_s ds)} $}. Since $\eta \equiv 0$ on $\partial B_{\varrho}(\mathbf{x}^0)$, by choosing \( \tilde{\bm{X}}_{t_{0}}=\mathbf{x} \) such that \( |\mathbf{x}-\mathbf{x}^{0}| \leq \varrho / 2 \) and setting $ I_{t_{0}}=k_0 $ we get from \eqref{eq:Y-eta} the following equation:
	\begin{multline}\label{eq:v}
		v_{i}^{\pi}(t_{0}, \mathbf{x}, k_0)= \mathbb{E}^{k_0}\left[\int_{B_{\varrho}(\mathbf{x}^{0})}p^{T-t_{0}, I}_{\mathbf{x},k_0}(\mathbf{y})g^{i}(\mathbf{y}, I_{\varsigma})\eta(\mathbf{y}) d\mathbf{y}\right. \\
		\left.+\int_{t_{0}}^{T}\int_{B_{\varrho}(\mathbf{x}^{0})} p^{s-t_{0}, I}_{\mathbf{x},k_0}(\mathbf{y}) \Psi^{i}(s, \mathbf{y}, \nabla_{\mathbf{x}}v(s \wedge \varsigma, \mathbf{y}, I_{s \wedge \varsigma-}),I_{s \wedge \varsigma-} ) d\mathbf{y}ds\right] .
	\end{multline}
	Now {by Cauchy-Schwarz} we have for any bounded and measurable function \( \psi:(\mathbb{R}^{d})^{N} \mapsto \mathbb{R} \):
	\begin{multline*}
		\lln\int_{B_{\varrho}(\mathbf{x}^{0})} \nabla_{x} p^{t,I}_{\mathbf{x}^{0},k_0}(\mathbf{y}) \psi(\mathbf{y}) d \mathbf{y}\rrn=\lln\int_{B_{\varrho}(\mathbf{x}^{0})}{\text{diag}\left(\E^{I,k_0} \int_{t_{0}}^{t}\Sigma^i_s\Sigma^{i \prime}_s ds\right)^{-1}}(\mathbf{y}-\mathbf{x}^0)p^{t,I}_{\mathbf{x}^{0},k_0}(\mathbf{y}) \psi(\mathbf{y}) d \mathbf{y}\rrn\\
		\leq\lp\int_{B_{\varrho}(\mathbf{x}^{0})} \frac{|\mathbf{y}-\mathbf{x}^0|^2}{|\E^{I,k_0} \int_{t_{0}}^{t}\Sigma^i_s\Sigma^{i \prime}_s ds|^2}p^{t,I}_{\mathbf{x}^{0},k_0}(\mathbf{y}) d \mathbf{y}\rp^{1/2}\lp\int_{B_{\varrho}(\mathbf{x}^{0})}  p^{t,I}_{\mathbf{x}^{0},k_0}(\mathbf{y}) |\psi(\mathbf{y})|^2 d \mathbf{y}\rp^{1/2}.
	\end{multline*}
	Noting that {$\int_{B_{\varrho}(\mathbf{x}^0)} |y-x^0|^2p^{t,I}_{\mathbf{x}^{0},k_0}(\mathbf{y}) d \mathbf{y} = |\text{diag}(\E^{I,k_0} \int_{t_{0}}^{t}\Sigma^i_s\Sigma^{i \prime}_s ds)|$}, we have by Corollary~\ref{cor:sigma-bd} that there exists $ c $ such that
	\begin{align}\label{eq:dxp}
		\lln\int_{B_{\varrho}(\mathbf{x}^{0})} \nabla_{x} p^{t,I}_{\mathbf{x}^{0},k_0}(\mathbf{y}) \psi(\mathbf{y}) d \mathbf{y}\rrn
		&\leq \frac{1}{\sqrt{{\E^{I,k_0} \int_{t_{0}}^{t}\nu_1^2 ds}}}{\lp\int_{B_{\varrho}(\mathbf{x}^{0})} p^{t,I}_{\mathbf{x}^{0},k_0}(\mathbf{y})|\psi(\mathbf{y})|^{2} d \mathbf{y}\rp}^{1 / 2}\nonumber\\
		&\leq \frac{c}{\sqrt{t}}{\lp\int_{B_{\varrho}(\mathbf{x}^{0})} p^{t,I}_{\mathbf{x}^{0},k_0}(\mathbf{y})|\psi(\mathbf{y})|^{2} d \mathbf{y}\rp}^{1 / 2} \nonumber\\
		&=\frac{c}{\sqrt{t}} {\lp \mathbb{E}^{{I,k_0}}{\lc |\psi(\tilde{\bm{X}}_{(t+t_{0}) \wedge \varsigma})|^{2} \mid \tilde{\bm{X}}_{t_{0}}=\mathbf{x}^{0}\rc}\rp}^{1 / 2},
	\end{align}
	which holds for a constant \( c \) independent of \( t, \mathbf{x}^{0} \) and \( \psi \) (but depending on \( \varrho \) ). {Moreover, from the symmetry of $p^{t,I}_{\mathbf{x}^{0},k_0}(\mathbf{y})$ that $p^{t,I}_{\mathbf{x}^{0},k_0}(\mathbf{x}^0+\mathbf{u})=p^{t,I}_{\mathbf{x}^{0},k_0}(\mathbf{x}^0-\mathbf{u})$ we have
	\begin{align*}
	\int_{B_{\varrho}(\mathbf{x}^{0})}	\nabla_{x}& p^{t,I}_{\mathbf{x}^{0},k_0}(\mathbf{y})g^i(\tilde{\bm{X}}_{t_0},I_\varsigma)\eta(\tilde{\bm{X}}_\varsigma)  d \mathbf{y}\\
	&=g^i(\tilde{\bm{X}}_{t_0},I_\varsigma)\eta(\tilde{\bm{X}}_\varsigma)\int_{B_{\varrho}(\mathbf{x}^{0})}\text{diag}\left(\E^{I,k_0} \int_{t_{0}}^{t}\Sigma^i_s\Sigma^{i \prime}_s ds\right)^{-1}(\mathbf{y}-\mathbf{x}^0)p^{t,I}_{\mathbf{x}^{0},k_0}(\mathbf{y})  d \mathbf{y}\\
	&=g^i(\tilde{\bm{X}}_{t_0},I_\varsigma)\eta(\tilde{\bm{X}}_\varsigma)\dfrac{1}{2}\int_{B_{\varrho}(\mathbf{0})}\text{diag}\left(\E^{I,k_0} \int_{t_{0}}^{t}\Sigma^i_s\Sigma^{i \prime}_s ds\right)^{-1}(\mathbf{u}-\mathbf{u})p^{t,I}_{\mathbf{x}^{0},k_0}(\mathbf{x}^0+\mathbf{u})  d \mathbf{u}=0.\\
	\end{align*}
	Therefore from \eqref{eq:dxp} we have that
	\begin{multline}\label{eq:dxpg-1}
		\lln\int_{B_{\varrho}(\mathbf{x}^{0})} \nabla_{x} p^{T-t_0,I}_{\mathbf{x}^{0},k_0}(\mathbf{y}) g^{i}(\mathbf{y}, I_{\varsigma})\eta(\mathbf{y}) d \mathbf{y}\rrn\\=\lln\int_{B_{\varrho}(\mathbf{x}^{0})} \nabla_{x} p^{T-t_0,I}_{\mathbf{x}^{0},k_0}(\mathbf{y}) \left(g^{i}(\mathbf{y}, I_{\varsigma})\eta(\mathbf{y})-g^i(\tilde{\bm{X}}_{t_0},I_\varsigma)\eta(\tilde{\bm{X}}_\varsigma)\right) d \mathbf{y}\rrn\\
		\leq\frac{c}{\sqrt{T-t_0}} {\lp \mathbb{E}^{I,k_0}{\lc |g^{i}(\tilde{\bm{X}}_{\varsigma},I_{\varsigma}) \eta(\tilde{\bm{X}}_{\varsigma})-g^i(\tilde{\bm{X}}_{t_0},I_\varsigma)\eta(\tilde{\bm{X}}_\varsigma)|^{2} \mid \tilde{\bm{X}}_{t_{0}}=\mathbf{x}^{0}\rc}\rp}^{1 / 2}.
	\end{multline}
	By Lipschitz continuity of $g^i$ and boundedness of $\eta$ in \eqref{eq:dxpg-1} we have 
	\begin{align}\label{eq:dxpg}
		\lln\int_{B_{\varrho}(\mathbf{x}^{0})} \nabla_{x} p^{T-t_0,I}_{\mathbf{x}^{0},k_0}(\mathbf{y}) g^{i}(\mathbf{y}, I_{\varsigma})\eta(\mathbf{y}) d \mathbf{y}\rrn \leq \frac{c}{\sqrt{T-t_0}} {\lp \mathbb{E}^{I,k_0}{\lc |\tilde{\bm{X}}_\varsigma-\tilde{\bm{X}}_{t_0}|^{2} \mid \tilde{\bm{X}}_{t_{0}}=\mathbf{x}^{0}\rc}\rp}^{1 / 2} \leq C,
	\end{align}
	where the last inequality holds since by Corollary~\ref{cor:sigma-bd} we have
	{\begin{align*}
		\mathbb{E}^{I,k_0}{\lc |\tilde{\bm{X}}_\varsigma-\tilde{\bm{X}}_{t_0}|^{2} \mid \tilde{\bm{X}}_{t_{0}}=\mathbf{x}^{0}\rc}=\E^{I,k_0} \sum_i \int_{t_{0}}^{\varsigma}{|\Sigma^i_s\Sigma^{i \prime}_s|} ds \leq C(T-t_0).
	\end{align*}}
	Also, notice that
	\begin{equation}\label{eq:psyvars}
		|\Psi^{i}(s, \tilde{\bm{X}}_{s \wedge \varsigma}, Z^i_{s \wedge \varsigma}, I_{s \wedge \varsigma-})|  
		=\mathds{1}_{\{s<\varsigma\}}|\Psi^{i}(s, \tilde{\bm{X}}_{s}, Z^i_{s}, I_{s-})| \leq C \mathds{1}_{\{s<\varsigma\}}(1+|Z^i_{s}|^{2}) .
	\end{equation}
}
	Therefore, by differentiating \eqref{eq:v} with respect to \( \mathbf{x} \) at \( \mathbf{x}=\mathbf{x}^{0} \) and {by using \eqref{eq:dxp}, \eqref{eq:dxpg} and \eqref{eq:psyvars}, we deduce that 
	\begin{multline*}
		|\nabla_{\mathbf{x}} v_{i}^{\pi}(t_{0}, \mathbf{x}^{0}, k_0)| \leq   C+C \int_{t_{0}}^{T} \frac{1}{\sqrt{s-t_{0}}}{\lp 1+{\lp\mathbb{E}^{I,k_0}[\mathds{1}_{\{s<\varsigma\}}|Z^i_{s}|^{4} \mid \tilde{\bm{X}}_{t_{0}}=\mathbf{x}^{0}]\rp}^{1 / 2}\rp} d s \\
		\leq C\lc1+\int_{t_{0}}^{T}\frac{\sup _{\mathbf{y} \in(\mathbb{R}^{d})^{N}, i_{0} \in \mathcal{S}}|\nabla_{\mathbf{x}} v_i^{\pi}(s, \mathbf{y}, i_{0})|}{\sqrt{s-t_{0}}} {\lp\mathbb{E}[\mathds{1}_{\{s<\varsigma\}}|Z^i_{s}|^{2} \mid \tilde{\bm{X}}_{t_{0}}=\mathbf{x}^{0}]\rp}^{1 / 2} d s\rc .
	\end{multline*}
	 Applying Cauchy-Schwarz inequality we have
	 \begin{multline*}
	 		|\nabla_{\mathbf{x}} v_{i}^{\pi}(t_{0}, \mathbf{x}^{0}, k_0)|\leq C\\
	 		+C\left(\int_{t_{0}}^{T} \frac{\sup _{\mathbf{y} \in(\mathbb{R}^{d})^{N}, i_{0} \in \mathcal{S}}|\nabla_{\mathbf{x}} v_i^{\pi}(s, \mathbf{y}, i_{0})|^{2}}{(s-t_{0})^{1-\beta}} d s\right)^{1 / 2}\left(\int_{t_{0}}^{T} \frac{\mathbb{E}^{I,k_0}[\mathds{1}_{\{s<\varsigma\}}|Z^i_{s}|^{2} \mid X_{t_{0}}=\mathbf{x}^{0}]}{(s-t_{0})^{\beta}} d s\right)^{1 / 2}, 
	 \end{multline*} }
	 Assume that \( T-t_{0} \leq \varrho^{2} \) and recall that $\varrho$ is given by Lemma~\ref{pro:intZ} while $\tau$ and $\zeta$ by \eqref{tau} and \eqref{eq:zeta} respectively. Therefore \( \mathds{1}_{\{s<\varsigma\}} \) equals \( \mathds{1}_{\{s<\tau\}} \). Using the conclusion of Lemma~\ref{pro:intZ} and then taking the square, we finally have:
	\begin{align*}
		|\nabla_{\mathbf{x}} v_{i}^{\pi}(t_{0}, \mathbf{x}^{0},k_0)|^{2} \leq 
		C\left[1+\int_{t_{0}}^{T} \frac{\sup _{\mathbf{y} \in(\mathbb{R}^{d})^{N}, i_{0 \in \mathcal{S}}}|\nabla_{\mathbf{x}} v_{{i}}^{\pi}(s, \mathbf{y}, i_{0})|^{2}}{(s-t_{0})^{1-\beta}} d s\right].
	\end{align*}
	We break the above integration $\int_{t_0}^T = \int_{t_0}^{t_0 + \ep} + \int_{t_0 + \ep}^T$ for any $\ep>0$. Recalling that the constant \( C \) is independent of \( \mathbf{x}^{0}, t_{0} \) and $ k_0 $, we then conclude that for a new value of \( C \):
	\begin{align*}
		\sup _{\mathbf{x} \in(\mathbb{R}^{d})^{N}, i_{0} \in \mathcal{S}}|\nabla_{\mathbf{x}} v_{{i}}^{\pi}(t_{0}, \mathbf{x}, i_{0})|^{2} \leq
		C+C \epsilon^{\beta} \sup _{t_{0} \leq t \leq T, \mathbf{x} \in(\mathbb{R}^{d})^{N}, i_{0} \in \mathcal{S}}|\nabla_{\mathbf{x}} v_{{i}}^{\pi}(t, \mathbf{x}, i_{0})|^{2} \\
		+\frac{C}{\epsilon^{1-\beta}} \int_{t_{0}}^{T} \sup _{s \leq t \leq T, \mathbf{x} \in(\mathbb{R}^{d})^{N}, i_{0} \in \mathcal{S}}|\nabla_{\mathbf{x}} v_{{i}}^{\pi}(t, \mathbf{x}, i_{0})|^{2} d s .
	\end{align*}
	Notice that the right-hand side increases as \( t_{0} \) decreases in \( [T-\varrho^{2}, T] \). Therefore,
	\begin{align*}
		\sup _{t_{0} \leq t \leq T, \mathbf{x} \in(\mathbb{R}^{d})^{N}, i_{0} \in \mathcal{S}}|\nabla_{\mathbf{x}} v_{{i}}^{\pi}(t, \mathbf{x}, i_{0})|^{2} \leq
		C+C \epsilon^{\beta} \sup _{t_{0} \leq t \leq T, \mathbf{x} \in(\mathbb{R}^{d})^{N}, i_{0} \in \mathcal{S}}|\nabla_{\mathbf{x}} v_{{i}}^{\pi}(t, \mathbf{x}, i_{0})|^{2} \\
		+\frac{C}{\epsilon^{1-\beta}} \int_{t_{0}}^{T} \sup _{s \leq t \leq T, \mathbf{x} \in(\mathbb{R}^{d})^{N}, i_{0} \in \mathcal{S}}|\nabla_{\mathbf{x}} v_{{i}}^{\pi}(t, \mathbf{x}, i_{0})|^{2} d s .
	\end{align*}
	Choosing \( \epsilon \) such that \( C \epsilon^{\beta} \leq \frac{1}{2} \), we get
	\begin{align*}
		\sup _{t_{0} \leq t \leq T, \mathbf{x} \in(\mathbb{R}^{d})^{N}, i_{0} \in \mathcal{S}}|\nabla_{\mathbf{x}} v_{{i}}^{\pi}(t, \mathbf{x}, i_{0})|^{2} \leq 
		C +\frac{C}{\epsilon^{1-\beta}} \int_{t_{0}}^{T} \sup _{s \leq t \leq T, \mathbf{x} \in(\mathbb{R}^{d})^{N}, i_{0} \in \mathcal{S}}|\nabla_{\mathbf{x}} v_{{i}}^{\pi}(t, \mathbf{x}, i_{0})|^{2} d s .
	\end{align*}
	Applying Gronwall's lemma, we deduce that for any fixed $i_{0} \in \cs$
	\[|\nabla_{\mathbf{x}} v^{\pi}(t, \mathbf{x}, i_{0})| \leq C, \]
	on \( [T-\varrho^{2}, T] \times (\mathbb{R}^{d})^{N} \). We can now duplicate the argument on \( [T-2 \varrho^{2}, T-\varrho^{2}] \times (\mathbb{R}^{d})^{N} \) by letting \( v^{\pi}(T-\varrho^{2}, \cdot, \cdot) \)  play the role of the terminal condition instead of \( g \). We complete the proof by iterating the argument a finite number of times.
\end{proof}
From Proposition~\ref{prop:4} and the discussion in Section~\ref{sec:strategy}, by taking limit as $R \rightarrow\infty$, our main result is now immediate.

\begin{theorem}\label{Thm:3.1}
	Under Hypothesis~\ref{hyp:Nash}, the Nash system \eqref{eq:Nash} has a unique solution $\{v_i(t, \mathbf{x},i_0), \, i \in [N], i_0 \in \cs \}  $ which is bounded and continuous on $[0,T] \times \mathbb{R}^{Nd}$, differentiable in $\mathbf{x} \in \mathbb{R}^{Nd}$ with a bounded and continuous gradient on $ [0,T) \times \mathbb{R}^{Nd}$.
\end{theorem}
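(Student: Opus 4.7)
The plan is to combine the truncation machinery from Section 2.4.1 with the gradient bound of Proposition~\ref{prop:4}, then pass to the limit $R\to\infty$. Recall that for each $R$ (and associated smooth truncation $\pi$), the truncated Nash system \eqref{eq:t-Nash} has a unique classical solution $v^{\pi}$ with bounded gradient by Lemma~\ref{Lem:BdCoef}. By Proposition~\ref{prop:1} and Proposition~\ref{prop:4}, $|v^{\pi}|$ and $|\nabla_{\mathbf{x}}v^{\pi}|$ are bounded by a constant $C$ independent of $R$ and $\pi$.

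Existence. By Lemma~\ref{Lem:UniOpt}, $|\hat{\alpha}(t,\mathbf{x},p,i_0)|\leq C(1+|p|)$, so applying this to $p=\nabla_i v_i^{\pi}$ and using the $R$-independent gradient bound, one gets $|\hat{\alpha}^{i}(t,\mathbf{x},\nabla_i v_i^{\pi}(t,\mathbf{x},i_0),i_0)|\leq C_{\ast}$ for some constant $C_{\ast}$ independent of $R$. Now fix $R>C_{\ast}$; then $\pi\bigl(\hat{\alpha}^{i}(t,\mathbf{x},\nabla_i v_i^{\pi},i_0)\bigr)=\hat{\alpha}^{i}(t,\mathbf{x},\nabla_i v_i^{\pi},i_0)$ pointwise, so that $\hat{\alpha}^{\pi}_i$ coincides with $\hat{\alpha}^{i}$ wherever it is evaluated in \eqref{eq:t-Nash}. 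Therefore $v^{\pi}$ actually solves the untruncated system \eqref{eq:Nash}, and inherits the regularity already established: boundedness and continuity on $[0,T]\times\mathbb{R}^{Nd}$, and a bounded continuous gradient on $[0,T)\times\mathbb{R}^{Nd}$.

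Uniqueness. Suppose $v$ and $\tilde v$ are two solutions of \eqref{eq:Nash} in the stated class. Since each is bounded with bounded gradient, Lemma~\ref{Lem:UniOpt} gives that the associated feedback controls $\hat{\alpha}^{i}(t,\mathbf{x},\nabla_i v_i,i_0)$ and $\hat{\alpha}^{i}(t,\mathbf{x},\nabla_i \tilde v_i,i_0)$ are bounded. Choose $R$ large enough (larger than both bounds). Then both $v$ and $\tilde v$ are solutions of the truncated system \eqref{eq:t-Nash} within the class where the truncation is inactive, and by Lemma~\ref{Lem:BdCoef} that system has a unique bounded $C^{1}$ solution. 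Hence $v=\tilde v$.

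The main technical obstacle has already been absorbed in the preceding sections, namely obtaining the $R$- and $\pi$-independent gradient bound in Proposition~\ref{prop:4} via the BMO-martingale and Girsanov-density argument backed by Lemma~\ref{pro:intZ}. Given that bound, the above two paragraphs are essentially bookkeeping: the linear-growth estimate \eqref{eq:al-ub} on the minimizer $\hat{\alpha}$ is precisely what allows one to pick $R$ large enough so that truncation disappears and one recovers a classical solution to \eqref{eq:Nash}.
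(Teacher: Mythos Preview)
Your proposal is correct and follows essentially the same approach as the paper: the paper's proof is a one-liner invoking Proposition~\ref{prop:4} together with the strategy of Section~\ref{sec:strategy} (choose $R$ large so the truncation is inactive), and you have simply spelled out that bookkeeping in detail, including the uniqueness argument which the paper leaves implicit in the uniqueness statement of Lemma~\ref{Lem:BdCoef}.
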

\begin{remark}
		Note that by the boundedness of gradient of $v$ from Theorem~\ref{Thm:3.1} and inequality \eqref{eq:al-ub}, we now obtain that the optimal strategy  $\hat{\al}^k (t, \mathbf{x}, \nabla_k v_k(t, \mathbf{x}, i_0), i_0)$ is bounded. Note that this follows despite the action space being the entire $\R^m$ as mentioned in Remark~\ref{rem:A}.
\end{remark}

\subsection{Uniqueness of Nash Equilibrium}\label{Sec:uniqueNE}
A Markovian strategy is one in which the strategies in \eqref{eq:player-sde}-\eqref{eq:cost} are given by $\beta_s^k = \phi^k(s, \bm{X}_s, I_s)$ for deterministic functions $\phi^k$.
Note that  
by Theorem~\ref{Thm:3.1} one has at least one bounded Markovian Nash equilibrium given by 
$$
\hat{\phi}^k(t, \mathbf{x},i_0) = \hat{\al}^k (t, \mathbf{x}, \nabla_k v_k(t, \mathbf{x}, i_0), i_0),
$$ 
where $v_k$ solves \eqref{eq:Nash}.  We now show that it is unique. Indeed, assume there exists a bounded Markovian strategy $\hat{\bm{\phi}}$ and $\hat{\bm{X}}$ solves \eqref{eq:player-sde} when modulated by $\hat{\bm{\phi}}$. Fix any $k$ and assume every player $i \neq k$ follows the strategy $\hat{\phi}^i$. Then the best response for player $k$ is a stochastic control problem with HJB equation given by \eqref{eq:pde-bestr} with $\hat{\beta}^i$ replaced by $\hat{\phi}^i(t, \mathbf{x}, i_0)$.{
\begin{align}\label{eq:pde-bestrphi}
	0 =& \partial_t U_k(t, \mathbf{x}, i_0) + H^k(t, \mathbf{x}, \nabla_kU_k(t, \mathbf{x}, i_0),i_0 ) 
	+ \sum_{{i \neq k}} \nabla_{i} U_{k}(t, \mathbf{x}, i_{0}) \cdot  b^k(t, \mathbf{x}, \hat{\phi}^i(t,\mathbf{x},i_0), i_0) \nonumber\\
	&+ \frac{1}{2} \sum_{i=1}^N \textnormal{tr} (\nabla_i^2 U_k(t, \mathbf{x}, i_0) (\si^k)' \si^k(t, \mathbf{x}, i_0)) +\sum_{j_0 \in \cs} q_{i_0 j_0} (U_k(t, \mathbf{x}, j_0) - U_k(t, \mathbf{x}, i_0)), \\
	&U_k(T, \mathbf{x}, i_0) = g^k(\mathbf{x}, i_0),  \nonumber
\end{align}
where $H^k$ is the Hamiltonian given by
\begin{align*}
	H^k(t,\mathbf{x}, p, i_0)= \inf_{\beta^k \in \R^m} H^k(t,\mathbf{x}, p, \beta^i, i_0)=\inf_{\beta^k \in \R^m} [ f^k(t, \mathbf{x}, \beta^k, i_0) + b^k(t, \mathbf{x}, \beta^k, i_0) \cdot p ]. 
\end{align*}
}
Given $(\hat{\phi}^i)_{i \neq k}$ {the equation \eqref{eq:pde-bestrphi}} has a unique solution $U_k$ similar (but simpler) to Theorem~\ref{Thm:3.1}, with optimal strategy given by a unique $\hat{\al}^k(t, \mathbf{x}, \nabla_k U_k(t, \mathbf{x}, i_0), i_0)$ {due to Lemma~\ref{Lem:UniOpt}}. {Now similar to \cite[Theorem 2.2]{carmona-delarue-siam} {using the convexity in $x^k$ of $\si(t,\mathbf{x},i_0)$} we have}
\begin{equation*}
	\E [ \int_0^T {| \hat{\al}^k(t, \hat{\bm{X}}_t, \nabla_kU_k(t, \hat{\bm{X}}_t, I_t), I_t)} {- \hat{\phi}^k(t, \hat{\bm{X}}_t, I_t) |}^2 dt ] = 0.
\end{equation*}
The marginal law of $\hat{X}_t$ and $I_t$ is absolutely continuous with respect to Lebesgue and counting measure and has positive density. Consequently for all $(t, \mathbf{x}, i_0)$, $\hat{\phi}^k(t, \mathbf{x}, i_0) = \hat{\al}^{{k}}(t, \mathbf{x}, \nabla_k U_k(t, \mathbf{x}, i_0), i_0)$. Since $k$ is any arbitrary player index,
\begin{equation*}
	 \hat{\phi}^k(t, \mathbf{x}, i_0) = \hat{\al}^{{k}}(t, \mathbf{x}, \nabla_k U_k(t, \mathbf{x}, i_0), i_0),
\end{equation*}
for all $k$.
This transforms \eqref{eq:pde-bestrphi} to \eqref{eq:Nash} which we know has a unique solution. Therefore {for $k=1,\cdots,N$, $U_k = v_k$ and consequently $\hat{\phi}^k(t, \mathbf{x}, i_0) = \hat{\al}^k(t, \mathbf{x}, \nabla_i v_i(t, \mathbf{x}, i_0), i_0)$}. The uniqueness of the latter proves uniqueness of $\hat{\phi}$.

\section{Regime Switching $N$-player Game with Mean Field Interaction}\label{sec:N-mfg}
{
In this section, we extend our findings to cover the situation where player interactions are replaced by mean field-type interactions. This is required to relate our $N-$player game to the mean field game problem in the sequel. We start by recalling some useful notions of derivatives in the space of measures taken from \cite{ref2}.
\begin{definition}[Linear functional derivative]\label{def:lfd}
	A function $ U: \mathcal{P}(\mathbb{R}^{d}) \rightarrow \mathbb{R} $ is said to have a linear functional derivative if there exists a continuous map 
	\(\frac{\delta U}{\delta m}: \mathcal{P}\left(\mathbb{R}^{d}\right) \times \mathbb{R}^{d}  \mapsto  \mathbb{R}\),
	such that for any $ m,m' \in \mathcal{P}(\mathbb{R}) $
	\[ U(m')-U(m)=\int_{0}^{1} \int_{\mathbb{R}^{d}} \frac{\delta U}{\delta m}\left((1-s) m+s m^{\prime}, y\right) d\left(m^{\prime}-m\right)(y) d s .
	\]
\end{definition}
\begin{definition}[$L$-derivative]\label{def:ld}
	Let $ U: \mathcal{P}(\mathbb{R}^{d}) \rightarrow \mathbb{R} $  have a linear functional derivative in the sense of Definition~\ref{def:lfd}, and for all $m \in \cp(\R^d)$ the function $\frac{\der U}{\der m}(m, x)$ is differential in $x$. Then the $L-$derivative \( D_{m} U: \mathcal{P}(\mathbb{R}^{d}) \times \mathbb{R}^{d} \mapsto \mathbb{R}^{d} \) is defined by
	\[
	D_{m} U(m, y):=\nabla_{y} \frac{\delta U}{\delta m}(m, y) .
	\]
\end{definition}
Following Definitions~\ref{def:lfd} and \ref{def:ld}, the second order linear functional and $L$- derivatives are denoted as follows: 
\[
\frac{\delta^{2} U}{\delta m^{2}}\left(m, y, y^{\prime}\right)=\frac{\delta}{\delta m}\left(\frac{\delta U}{\delta m}(\cdot, y)\right)\left(m, y^{\prime}\right), 
\text{ and } 
D_{m}^{2} U\left(m, y, y^{\prime}\right)=\nabla_{y, y^{\prime}}^{2} \frac{\delta^{2} U}{\delta m^{2}}\left(m, y, y^{\prime}\right).\]
}
We are now ready to describe our game in the mean-field interaction setting.
\subsection{Mean Field Interaction $N$-player Game}
Consider the mean field type interaction within the setup of the $N$-player differential game with regime switching where the $k$-th player satisfies:
\beq\label{eq:Mplayer-sde}
dX_{s}^{k}=b(s, X_{s}^k,\mu^{N-1,k}_s , \beta_s^k , I_{s}) d s+\sigma(s, X_{s}^k,\mu^{N-1,k}_s, I_{s}) dW_{s}^{k},
\eeq
where we denote $\mu^{N-1,k}_s=\mu^{N-1,k}(X_{s})$ to be the empirical measure {$\frac{1}{N-1} \sum_{i \neq k}^{N-1} \delta_{X^{i}_{s}}$}.  Here {$ b:[0,T] \times \mathbb{R}^{d} \times \mathcal{P}(\mathbb{R}^d) \times A \times \mathcal{S} \rightarrow \mathbb{R}^{d} $ and 
$ \sigma:[0,T] \times \mathbb{R}^{d} \times \mathcal{P}(\mathbb{R}^d) \times A \times \mathcal{S} \rightarrow \mathbb{R}^{d}$,
where $A$ and $\cs$ are defined in Section~\ref{sec:prob-setup}.} 
We impose that the game is symmetric, that is the coefficients $b,\sigma, f, g$ do not depend on $k$. In this case, the objective function is of the form:
\begin{multline}\label{eq:Mcost}
	J_{k}(t, x, \mu, i_0, \beta^{k}, \bm{\beta}^{-k} )\\= \mathbb{E}[{\int_{t}^{T} f (s, X_{s}^k,\mu^{N-1,k}_s, \beta_{s}^{k}, I_{s}) d s} +g ( X_{T}^k,\mu^{N-1,k}_s , I_{T})| X_t^k = x, \mu_t^{N-1,k} = \mu, I_t = i_0],
\end{multline}
for $k=1, \ldots, N$ 
where $X^k$ solves \eqref{eq:Mplayer-sde} modulated by the strategy $\beta^k$. In addition, $ f:[0,T] \times \mathbb{R}^{d} \times \mathcal{P}(\mathbb{R}^d) \times {A} \times \mathcal{S} \rightarrow \mathbb{R} $, and $ g:\mathbb{R}^{d} \times \mathcal{P}(\mathbb{R}^d) \times {A} \times \mathcal{S} \rightarrow \mathbb{R} $. We now show that this game has a unique Nash equilibrium. 

\subsection{Assumptions}
For the purpose of completeness, we state here similar assumptions to Hypothesis~\ref{hyp:Nash}.
\begin{hypothesis}\label{hyp:Nashm}
	The admissible control set $A$ is the entire $\R^{m}$. For any fixed $i_{0} \in \mathcal{S}$, there exist positive constants $L,L^\prime$ and $\lambda$ such that: 
	
	\begin{enumerate}[label={(A\arabic*)}]
		\item \label{hyp:A1} The drift function $b(t,x,\mu,\al,i)$ has the following form, where $b_{0}, b_1$ and $b_{2}$ are $\mathbb{R}^d, \mathbb{R}^{d \times d}$ and $\mathbb{R}^{d \times m}$ valued respectively, and are bounded by $c_L$ for all $i_0 \in \mathcal{S}$. In particular, $b$ reads
		\begin{equation*}
			b(t,x,\mu,\alpha,i_0) = b_{0}(t,\mu,i_0) + b_{1}(t,i_0) x + b_{2}(t,i_0) \alpha.
		\end{equation*}
		Moreover, $b_{0}$ satisfies for any 
		$\mu,\mu' \in {\mathcal P}_{2}(\mathbb{R}^d)$:
		$
		\vert b_{0}(t,\mu',i_0) - b_{0}(t,\mu,i_0) \vert \leq c_\nu W_{2}(\mu,\mu')$. 
		
		\item \label{hyp:A2} For any $\mu \in {\mathcal P}_{2}(\mathbb{R}^d)$, the function 
		$\mathbb{R}^d \ni x \hookrightarrow g(x,\mu,i_0)$ is once continuously differentiable and convex and has a $L$-Lipschitz-continuous first order derivative. Moreover,
			\begin{align}
			|g(x,\mu,i_{0})| \leq L,   \label{eq:gg2}\\
			|\nabla_xg(x,\mu,i_0)|\leq L, \label{eq:ggrad2}
		\end{align} 
		
		\item {\label{hyp:A3}For any $(t, \mu) \in [0, T] \times \mathcal{P}_2(\mathbb{R}^d)$, the function $f(t, x, \mu, \alpha, i_0)$ is once continuously differentiable in $x$ and $\alpha$, with derivatives $\nabla_x f$ and $\nabla_\alpha f$ that are $L$-Lipschitz continuous in $x$ and $\alpha$. Additionally, the following inequality holds:
			\begin{equation*}
				\nabla_\alpha f(t, x, \mu', \alpha, i_0) - \nabla_\alpha f(t, x, \mu, \alpha, i_0) \leq c_\alpha W_2(\mu', \mu).
			\end{equation*}
			Furthermore, $f$ is strongly convex in $\alpha$ in the sense that:
			\begin{equation*}
				f(t, x', \mu, \alpha', i_0) - f(t, x, \mu, \alpha, i_0) - \langle (x' - x, \alpha' - \alpha), \nabla_{(x, \alpha)} f(t, x, \mu, \alpha, i_0) \rangle \geq \lambda |\alpha' - \alpha|^2.
			\end{equation*}
}
		
		\item \label{hyp:A4}For all $ (t,x,\mu,\alpha) \in [0,T] \times \mathbb{R}^{dN}  \times A$, 
		\[|\nabla_{\alpha} f(t, x,\mu, \alpha,i_{0})| \leq L^\prime(1+|\alpha|), \text{ where } L^\prime < \lambda. \]
		\begin{equation}\label{eq:ff2}
			|f(t, \mathbf{x}, \alpha,i_{0})| \leq L(1+|\alpha|^{2}).
		\end{equation}
		\item \label{hyp:A5}For any $ i,j \in \mathcal{S}, i\neq j $, the transition rates
		$q_{ij} \leq L $.
		
		{\item \label{hyp:A6}$ \sigma: [0, T] \times \mathbb{R}^{d} \times \mathcal{P}_2(\mathbb{R}^d) \times \cs \mapsto \mathbb{R}^{d \times d} $ is Lipschitz continuous in $ x$:
		\begin{equation*}
			 |\sigma(t,x,\mu,i_{0})-\sigma(t,y,\mu,i_{0})| \leq L(|x-y|).
		\end{equation*}
		}
		\item \label{hyp:A7}{ $\sigma(t,x,\mu,i_{0})$ is convex in $x$. Moreover, for $(t,x,\mu) \in [0,T] \times \mathbb{R}^{d} \times \mathcal{P}_2(\mathbb{R}^d)$, $\sigma(t,x,\mu,i_{0})$ is invertible and has continuous second derivative in $t$ and $x$. Denote $a_{ij}(t,x,\mu,i_{0})=\sigma(\sigma)^{\prime}_{ij}(t,x,\mu,i_{0})$ {and $\xi \in \R^d$. Then} the following are satisfied}
		\begin{align*}
			\nu_{1}\xi^{2} &\leq a_{ij}(t,x,\mu,i_{0})\xi_{i}\xi_{j} \leq \nu_{2}\xi^{2}, \quad \nu_i > 0 \\
			&\left|\frac{\partial a_{ij}(t,x,\mu,i_{0})}{\partial x}\right| \leq \nu_{2} . 
		\end{align*}
		
		\item \label{hyp:A8} For $h=b_0,f,g,\sigma$ and some constant $\Gamma$
		\begin{align*}
			& \mathbb{R}^d \times \mathcal{P}_2\left(\mathbb{R}^d\right) \times \mathbb{R}^d \ni(t, x, \mu, v) \mapsto\left(D_\mu h(t, x, \mu)(v), \nabla_v D_\mu h(t, x, \mu)(v)\right) \\
			& \mathbb{R}^d \times \mathcal{P}_2\left(\mathbb{R}^d\right) \times \mathbb{R}^d \times \mathbb{R}^d \ni\left(t, x, \mu, v, v^{\prime}\right) \mapsto D_\mu^2 h(t, x, \mu)\left(v, v^{\prime}\right)
		\end{align*}
		are bounded by $\Gamma$ and are $\Gamma$-Lipschitz continuous with respect to $(x, \mu, v)$ and to $\left(x, \mu, v, v^{\prime}\right)$, for any $t \in[0, T]$.
			\end{enumerate}
\end{hypothesis}
{ Similar to Corollary~\ref{cor:sigma-bd} we have the following result bounding $\si$ in the new setting. 
	\begin{corollary}\label{cor:sigma-bd-2}
		From Hypothesis~\ref{hyp:A7} we have that for any $(t,x,\mu,i_0) \in [0,T] \times \mathbb{R}^{dN} \times \mathcal{S}$, there exist constant $C_1,C_2>0$ such that 
		\begin{align*}
			C_1\leq|\sigma(t,x,\mu,i_0)|^2\leq C_2.
		\end{align*}
	\end{corollary}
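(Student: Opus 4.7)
The plan is to mirror the argument already given for Corollary~\ref{cor:sigma-bd} essentially verbatim, since Hypothesis~\ref{hyp:A7} provides exactly the same uniform ellipticity and upper bound for the quadratic form $\xi^{\top}\sigma\sigma^{\top}\xi$ as Hypothesis~\ref{hyp:H6} did in the general interaction setting, only with $\mathbf{x} \in \mathbb{R}^{Nd}$ replaced by $(x,\mu) \in \mathbb{R}^d \times \mathcal{P}_2(\mathbb{R}^d)$.

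First I would fix an arbitrary $(t,x,\mu,i_0)$ and apply \ref{hyp:A7} with an arbitrary $\xi \in \mathbb{R}^d$ to get
\[
\nu_1 |\xi|^2 \;\leq\; \xi^{\top}\sigma\sigma^{\top}\xi \;=\; |\sigma^{\top}\xi|^2 \;\leq\; \nu_2|\xi|^2,
\]
which is equivalent to saying that the spectral norm of $\sigma^{\top}$ satisfies $\nu_1 \leq \|\sigma^{\top}\|_2^2 \leq \nu_2$ uniformly in $(t,x,\mu,i_0)$.

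Next I would convert the spectral norm bound into a Frobenius norm bound via the standard comparison $\|\sigma^{\top}\|_2 \leq |\sigma^{\top}| \leq \sqrt{d}\,\|\sigma^{\top}\|_2$, which yields
\[
\nu_1 \;\leq\; |\sigma^{\top}|^2 \;\leq\; d\nu_2.
\]
Setting $C_1 := \nu_1$ and $C_2 := d\nu_2$ finishes the proof, since $|\sigma| = |\sigma^{\top}|$ under the Frobenius norm convention adopted in notation~(N1).

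I do not anticipate any real obstacle here, since the argument is a one-line linear algebra observation plus the ellipticity hypothesis; the only thing to check is that the constants $C_1, C_2$ can indeed be chosen independently of $(t,x,\mu,i_0)$, which is immediate from the fact that $\nu_1,\nu_2$ in \ref{hyp:A7} are universal constants.
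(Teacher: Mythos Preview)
Your proposal is correct and is exactly the approach the paper takes: the paper does not write out a separate proof for Corollary~\ref{cor:sigma-bd-2} but simply remarks that it follows ``similar to Corollary~\ref{cor:sigma-bd},'' and your argument reproduces that proof verbatim with $(x,\mu)$ in place of $\mathbf{x}$.
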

}
We now state our existence and uniqueness result in the mean-field interaction setting.
\begin{theorem} 
	Under Hypothesis~\ref{hyp:Nashm}, the N-player mean field type stochastic differential game with regime switching described in \eqref{eq:Mplayer-sde}-\eqref{eq:Mcost} has unique Nash equilibrium with bounded Markovian optimal strategy.{ Moreover, for all $i\in[N], i_0\in\cs$, the value function $v_i(t,x,\mu,i_0)=\inf_{{\beta \in \mathbb{A}}} J_k(t, x, \mu, i_0, \beta, \hat{\bm \beta}^{-k})$ is {bounded and continuous on $[0,T] \times \mathbb{R}^{d}  \times \mathcal{P}_2(\mathbb{R}^d)$, differentiable in $x$ and $\mu$, with a bounded and continuous gradient on $ [0,T) \times \mathbb{R}^{d} \times \mathcal{P}_2(\mathbb{R}^d)$.}}
\end{theorem}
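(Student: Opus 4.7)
The plan is to reduce the mean-field interaction statement to the general interaction Theorem~\ref{Thm:3.1} by lifting the coefficients to the product state space. Define $\tilde{b}^k(t,\mathbf{x},\alpha,i_0):=b(t,x^k,\mu^{N-1,k}(\mathbf{x}),\alpha,i_0)$ and analogously $\tilde{\sigma}^k,\tilde{f}^k,\tilde{g}^k$. First I would check that the lifted coefficients satisfy Hypothesis~\ref{hyp:Nash}. Most items transfer immediately: the affine form of $b$ in $\alpha$, strong convexity of $f$ in $\alpha$, the growth bound on $\nabla_\alpha f$, the rate bounds on $q_{ij}$, the bounds on $g$ and $\nabla_x g$ from \eqref{eq:gg2}--\eqref{eq:ggrad2}, and the uniform ellipticity and convexity of $\sigma\sigma^\top$ in $x^k$ from \ref{hyp:A7}. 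The only non-trivial verification is Lipschitz regularity in the full vector $\mathbf{x}$; this follows from the $W_2$-Lipschitz continuity in $\mu$ of $b_0, f, \sigma$ in \ref{hyp:A1}, \ref{hyp:A3}, \ref{hyp:A6}, combined with the elementary contraction $W_2(\mu^{N-1,k}(\mathbf{x}),\mu^{N-1,k}(\mathbf{y}))\leq (N-1)^{-1/2}|\mathbf{x}-\mathbf{y}|$. Boundedness of the $\mathbf{x}$-derivatives of $\tilde{\sigma}\tilde{\sigma}^\top$ demanded by \ref{hyp:H6} follows from the boundedness of $D_\mu \sigma$ in Hypothesis~\ref{hyp:A8}.

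With Hypothesis~\ref{hyp:Nash} in force, Theorem~\ref{Thm:3.1} delivers a unique bounded continuous solution $\{v_i^{(N)}(t,\mathbf{x},i_0)\}$ of the associated Nash system with bounded continuous gradient on $[0,T)\times\mathbb{R}^{Nd}$, and the remark closing Section~\ref{sec:prob-setup} gives boundedness of the optimal feedback. The uniqueness argument of Section~\ref{Sec:uniqueNE} then transfers verbatim and yields uniqueness of the bounded Markovian Nash equilibrium. Since $b,\sigma,f,g$ are independent of the player label, permutation symmetry of the Nash system in the coordinates $\{x^j\}_{j\neq i}$ is preserved and, by uniqueness, inherited by the solution, so one may write $v_i^{(N)}(t,\mathbf{x},i_0)=V(t,x^i,\mu^{N-1,i}(\mathbf{x}),i_0)$ for a function $V$ defined a priori on the submanifold of $(N-1)$-atomic empirical measures.

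The hard part is the final clause, namely continuity and $L$-differentiability of $V$ on all of $\mathcal{P}_2(\mathbb{R}^d)$. The key relation is the formal identity $D_\mu V(t,x,\mu^{N-1,i}(\mathbf{x}),i_0)(x^j)=(N-1)\nabla_{x^j}v_i^{(N)}(t,\mathbf{x},i_0)$ for $j\neq i$, which pulls the Euclidean gradient estimates of Theorem~\ref{Thm:3.1} back to $L$-derivatives of $V$ on the empirical submanifold. To extend this off the submanifold I would exploit Hypothesis~\ref{hyp:A8}: differentiate the truncated Nash system with respect to a single coordinate $x^j$ and replay the bootstrap of Propositions~\ref{prop:1}--\ref{prop:4}, now leveraging the $\Gamma$-Lipschitz and boundedness estimates on $D_\mu h$ and $D_\mu^2 h$ for $h=b_0,f,g,\sigma$, so as to obtain uniform a priori bounds on $\nabla_{x^j}v_i^{(N),\pi}$ independent of $\pi$ and $R$. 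Passing to the limit in the truncation and combining with $W_2$-density of empirical measures in $\mathcal{P}_2(\mathbb{R}^d)$ then upgrades these bounds into continuity and boundedness of $V$ and $D_\mu V$ on $[0,T]\times\mathbb{R}^d\times\mathcal{P}_2(\mathbb{R}^d)$. This propagation of the measure-variable estimates through the truncation/bootstrap is where the bulk of the technical effort lies.
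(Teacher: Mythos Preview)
Your first two paragraphs match the paper's approach exactly: lift the coefficients to the product space via $f^k(t,\mathbf{x},\alpha,i_0)=f(t,x^k,\mu^{N-1,k}(\mathbf{x}),\alpha,i_0)$ (and similarly for $b^k,\sigma^k,g^k$), verify Hypothesis~\ref{hyp:Nash}, and invoke Theorem~\ref{Thm:3.1} together with Section~\ref{Sec:uniqueNE}. The paper is more terse than you in the verification step: instead of checking Lipschitz regularity by hand via the contraction $W_2(\mu^{N-1,k}(\mathbf{x}),\mu^{N-1,k}(\mathbf{y}))\leq (N-1)^{-1/2}|\mathbf{x}-\mathbf{y}|$, it appeals to \cite[Prop.~6.30]{ref3} (the empirical-projection lemma) to transfer the $L$-differentiability assumptions in \ref{hyp:A8} directly into the required Euclidean differentiability of the lifted coefficients, in particular the continuous second derivatives of $\sigma^k$ demanded by \ref{hyp:H6}. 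Both routes are correct; yours is more explicit, the paper's is shorter.

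Your third paragraph, however, is both unnecessary and contains a genuine gap. The paper's proof stops after invoking Theorem~\ref{Thm:3.1}: the ``differentiability in $\mu$'' asserted in the statement is to be read as differentiability of $v_i^{(N)}(t,\mathbf{x},i_0)$ in the atoms of $\mu^{N-1,i}(\mathbf{x})$, which is exactly what Theorem~\ref{Thm:3.1} delivers. No extension of $V$ off the $(N-1)$-atomic submanifold is claimed or proved. Your proposal to ``replay the bootstrap of Propositions~\ref{prop:1}--\ref{prop:4}'' for $\nabla_{x^j}v_i^{(N),\pi}$ is redundant, since Theorem~\ref{Thm:3.1} already gives bounded gradients in all coordinates. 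More seriously, your extension argument fails: for fixed $N$, the set of $(N-1)$-atomic probability measures is a closed nowhere-dense subset of $\mathcal{P}_2(\mathbb{R}^d)$, so $W_2$-density cannot be invoked to extend $V$ to all of $\mathcal{P}_2(\mathbb{R}^d)$. Drop the third paragraph entirely; the proof is complete after the second.
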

\begin{proof}
	One can easily recover the notations in Theorem~\ref{Thm:3.1} by letting 
	\begin{align*}
		&f^k(t, \mathbf{x}, \al, i_0) = f(t, x^i, \mu^{N-1,k}(\mathbf{x}), \al, i_0 ) , \quad g^k(\mathbf{x},i_0)=g(t,x^i,\mu^{N-1,k}(\mathbf{x}),i_0),\\
		&b^k(t, \mathbf{x}, \al, i_0) = b(t, x^i, \mu^{N-1,k}(\mathbf{x}), \al, i_0 ) , \quad \sigma^k(t,\mathbf{x}, i_0)= \sigma(t,x^i,\mu^{N-1,k}(\mathbf{x}),i_0).
	\end{align*}
	{The differentiability of $b^k,f^k,g^k,\sigma^k$ follows from \ref{hyp:A1}, \ref{hyp:A2}, \ref{hyp:A3}, \ref{hyp:A7} and \ref{hyp:A8} by using \cite[Prop. 6.30]{ref3}. In particular, $\sigma^k$ has continuous second derivatives.} 
	Then the result directly follows from Theorem~\ref{Thm:3.1} and discussion in Section~\ref{Sec:uniqueNE}.
\end{proof}

\section{Regime Switching Mean Field Game.}\label{sec:rs-mfg}
In this section we study the regime switching mean field game problem. This corresponds to the regime switching $N$-player game with mean field interaction, but in the limit when $N \to \infty$. We start by describing the problem at hand.
\subsection{Mean Field Game Problem}
Assume that a Markov chain $(I_t)_{0 \leq t \leq T}$ and a $d$-dimension Brownian motion $(W_t)_{0 \leq t \leq T}$ are defined on the probability space \( (\Omega, \mathcal{F},(\mathcal{F}_{s})_{s \in[t, T]}, \mathbb{P}) \) mentioned in Section~\ref{sec:prob-setup}. For each $t>0$, denote $\mathcal{F}_t^I=\sigma\{I_s: 0 \leq s \leq t\}$, $\mathcal{F}_t^W=\sigma\{W_s:$ $0 \leq s \leq t, \}$, and put $\mathcal{F}_t=\mathcal{F}_t^{W, I}=\sigma\{W_s, I_s: 0 \leq s \leq t\}$. Let $b$, $\si$ be similar to Section~\ref{sec:N-mfg}. Consider the following controlled McKean-Vlasov type stochastic differential equation for a specific continuous $\mathcal{P}_2(\mathbb{R}^d)$-valued stochastic process $\bm{\mu}=(\mu_s)_{0 \leq s \leq T}$ adapted to $\mathcal{F}_t^{I}$:
\begin{align}\label{eq:player-sde2}
	d X_{s}=b\left(s, X_{s}, \mu_{s}, \beta_{s}, I_{s}\right) d s+\sigma\left(s, X_{s}, \mu_{s}, I_{s}) dW_{s},
	\right.
\end{align}
for $\beta \in \mathbb{A}$. We consider the game problem when continuum of players follow this stochastic differential equation, where each player considers an optimal control problem with $f$ and $g$ to be the running and terminal costs respectively given again in Section~\ref{sec:N-mfg}. The cost functional is given by
\begin{equation*}
	J(t,x, \bm{\mu},\beta,i_0)=\mathbb{E}\left[\left.\int_t^{T} f\left(s, X_{s}, \mu_{s}, \beta_{s}, I_{s-}\right) d s+g\left( X_{T}, \mu_{T}, I_{T}\right) \right|X_t=x, I_t=i_0\right],
\end{equation*}
where $(X_s)_{s \in[0,T]}$ solves the SDE~\eqref{eq:player-sde2}, and $\bm{\mu}$ is as before. Then denote
	\begin{equation}\label{eq:valuemu}
		v^{\bm{\mu}}(t,x,i_0)= \inf_{\beta \in \mathbb{A}} J(t,x,\bm{\mu},\beta,i_0),
	\end{equation}
	to be the optimal value function.
\begin{definition}\label{def:mfg}
A Nash equilibrium of a mean field game problem is said to be reached if a given flow of random probability measures $\hat{\mu}_s:[t,T] \times \R^d \rightarrow \mathbb{R}$ and a control $\hat{\beta}$ satisfies the following two conditions:
\begin{enumerate}
\item $\hat{\beta}=\textnormal{argmin}_{\beta \in \mathbb{A}}J(t,x, \hat{\bm{\mu}},\beta,i_0)$ 
\item Denote the optimal path under optimal control $\hat{\beta}$ with probability measure flow $\bm{\mu}$ to be $\hat{X}_s[\bm{\mu}]$. Then for all $s \in [t,T]$, $ \hat{\mu}_s=\mathcal{L}\left(\hat{X}_{s}[\hat{\bm{\mu}}] \mid \mathcal{F}_{s}^{I}\right)$.
\end{enumerate}
\end{definition}
\begin{remark}
	$\mathcal{L}(X_t \mid \mathcal{F}^I_t)$ is well defined since $\mathcal{S}$ is a finite set. In the rest of the paper, we will use $\mathcal{L}^1(X_t)$ to denote a version of the conditional law of $\mathcal{L}(X_t \mid \mathcal{F}^I_t)$.
	\end{remark}
\subsection{Assumptions}

In addition to Hypothesis~\ref{hyp:Nashm}, we will require some additional assumptions.
\begin{hypothesis}\label{hyp:Nash2}
	Let Hypothesis~\ref{hyp:Nashm} hold except for \eqref{eq:gg2}, \eqref{eq:ggrad2} and \eqref{eq:ff2}. Furthermore, there exist constant $c_L>0$ such that 
	\begin{enumerate}[label={(A\arabic*)}, start=9]	
		\item \label{hyp:A10} For all $t \in [0,T]$, $x,x' \in \mathbb{R}^d$, $\alpha,\alpha' \in \mathbb{R}^m$ and $\mu,\mu' \in {\mathcal P}_{2}(\mathbb{R}^d)$ the following holds uniformly in $i_0 \in \cs$:
		\begin{multline*}
				\bigl\vert (f,g)(t,x',\mu',\alpha',i_0) - (f,g)(t,x,\mu,\alpha,i_0)\bigr\vert 
				\\
				\leq c_{L} \bigl[ 1 + \vert (x',\alpha') \vert + \vert (x,\alpha) \vert + M_{2}(\mu) + M_{2}(\mu') \bigr]
				\bigl[ \vert (x',\alpha') - (x,\alpha) \vert +
				W_{2}(\mu',\mu) \bigr].
		\end{multline*} 
		
			\item \label{hyp:A12} For all $t \in [0,T]$, $x,x' \in \mathbb{R}^d$, $\alpha,\alpha' \in \mathbb{R}^m$ and $\mu,\mu' \in {\mathcal P}_{2}(\mathbb{R}^d)$ the following holds uniformly in $i_0 \in \cs$:
		\begin{align*}
			\bigl\vert \nabla_x f(t,x',\mu',\alpha',i_0) - \nabla_x f(t,x,\mu,\alpha,i_0)\bigr\vert &\leq c_{\theta}  \vert (x',\alpha') - (x,\alpha) \vert + c_\nu W_{2}(\mu',\mu) \\
			\bigl\vert \nabla_x g(x',\mu',i_0) - \nabla_x g(x,\mu,i_0)\bigr\vert &\leq c_{\theta}  \vert x' - x \vert + c_\mu W_{2}(\mu',\mu).
		\end{align*} 
		
		\item \label{hyp:A13} There exists a positive constant $K_h$ such that for any $x_1, x_2 \in \mathbb{R}^d, \mu \in \mathcal{P}_2\left(\mathbb{R}^d\right)$, and $i_0 \in \mathcal{S}$, we have
		
		$$
		\left\langle \nabla_x g\left(x_1, \mu, i_0\right)-\nabla_x g\left(x_2, \mu, i_0\right), x_1-x_2\right\rangle \geq K_h\left|x_1-x_2\right|^2, \quad \mathbb{P} \text {-a.s. }
		$$
		
		\item \label{hyp:A14} $b_2(t,i_0)$ is invertible for all $t$ and $i_0$, and there exists a positive constant $K_\Psi$ such that for any $x_1, x_2 \in \mathbb{R}^d, \al_1, \al_2 \in \mathbb{R}^m, q_1,q_2 \in \mathbb{R}^{d \times d}, \mu \in \mathcal{P}_2\left(\mathbb{R}^d\right)$, and $i_0 \in \mathcal{S}$, we have $\mathbb{P}$-almost surely
		\begin{multline*}
			K_h\left(\left|x_1-x_2\right|^2+\left|q_1-q_2\right|^2\right)\leq-K_h\left|\left(\nabla_\al f\left(t,x_1, \mu,\al_1, i_0\right)-\nabla_\al f\left(t, x_2, \mu, \al_2, i_0\right)\right)b_2(t,i_0)^{-1}\right|^2\\
			+\left\langle \nabla_{(x,\al)} f\left(t,x_1, \mu,\al_1, i_0\right)-\nabla_{(x,\al)} f\left(t, x_2, \mu, \al_2, i_0\right), (x_1-x_2,\al_1-\al_2)\right\rangle+\\ 
			\langle\nabla_x\left(\operatorname{tr}(q_1^\prime\sigma(t,x_1,\mu,i_0))-\operatorname{tr}(q_2^\prime \sigma(t,x_2,\mu,i_0))\right),q_1-q_2\rangle-\left[\sigma(t,x_1,\mu,i_0)-\sigma(t,x_2,\mu,i_0),q_1-q_2\right],
		\end{multline*}
		here $[A, B]=\sum_{i=1}^d\left\langle A_i, B_i\right\rangle$, where $A_i$ and $B_i, i=1,2, \ldots, d$, are the $i$-th column of $d \times d$ matrices $A$ and $B$. Moreover, the constants
		\begin{equation*}
			c_\theta\max\{c_L, c_\alpha\},c_\nu, c_\mu<\min \left\{(\sqrt{3}-1) K_h, K_{\Psi} / \sqrt{3}\right\}.
		\end{equation*}
		
		\item \label{hyp:A15} If $\sigma$ does not depend on $\mu$, the above assumption can be relaxed to the following. $b_2(t,i_0)$ is invertible for all $t$ and $i_0$, and there exists a positive constant $K_\Psi$ such that for any $x_1, x_2 \in \mathbb{R}^d, \al_1, \al_2 \in \mathbb{R}^m, q_1,q_2 \in \mathbb{R}^{d \times d}, \mu \in \mathcal{P}_2\left(\mathbb{R}^d\right)$, and $i_0 \in \mathcal{S}$, we have $\mathbb{P}$-almost surely
		\begin{multline*}
			K_h\left|x_1-x_2\right|^2\leq-K_h\left|\left(\nabla_\al f\left(t,x_1, \mu,\al_1, i_0\right)-\nabla_\al f\left(t, x_2, \mu, \al_2, i_0\right)\right)b_2(t,i_0)^{-1}\right|^2\\
			+\left\langle \nabla_{(x,\al)} f\left(t,x_1, \mu,\al_1, i_0\right)-\nabla_{(x,\al)} f\left(t, x_2, \mu, \al_2, i_0\right), (x_1-x_2,\al_1-\al_2)\right\rangle\\ 
			+\langle\nabla_x\left(\operatorname{tr}(q_1^\prime\sigma(t,x_1,i_0))-\operatorname{tr}(q_2^\prime \sigma(t,x_2,i_0))\right),q_1-q_2\rangle-\left[\sigma(t,x_1,i_0)-\sigma(t,x_2,i_0),q_1-q_2\right].
		\end{multline*}
		Moreover, the constants
		\begin{equation*}
			c_\theta\max\{c_L, c_\alpha\},c_\nu, c_\mu<\min \left\{2(\sqrt{2}-1) K_h, K_{\Psi} / \sqrt{2}\right\}.
		\end{equation*}
		
		{\item \label{hyp:gradsigma} $\nabla_x\sigma(t,x,\mu,i_0)$ is bounded by $c_L$ and Lipschitz continuous, and
		\begin{equation*}
			\bigl\vert \sigma(t,x,\mu',i_0) - \sigma(t,x,\mu,i_0)\bigr\vert 
			\leq c_{\nu} W_{2}(\mu',\mu) .
		\end{equation*} 
		} 
	\end{enumerate}
\end{hypothesis}
\subsection{Stochastic Maximum Principle and FBSDE}
In this section, we provide some results establishing the stochastic maximum principle and the regime switching McKean Vlasov FBDSE referred to earlier. We mention that in a control setting and with a slightly different interaction, stochastic maximum principle has been established in \cite{nguyen2021general-smp-regime}, while study of such FBSDEs have been performed in \cite{rolon2024markovian}. While we use the results in \cite{rolon2024markovian} for existence and uniqueness, we derive the stochastic maximum principle in our setting and draw the connection to our FBSDE. To that end, let us first define the Hamiltonian to be
\begin{align}\label{eq:Ham}
	&\mathcal{H}(t,x,\mu,\al,p,q,i_0)={p\cdot b(t,x,\mu,\al,i_0)}  +f(t,x,\mu,\al,i_0)+\operatorname{tr}{\left(q^{\prime}\sigma (t,x,\mu,i_0)\right)}.
	\end{align}
Then the following lemma holds.
\begin{lemma}\label{Lem:UniOpt2}
Under Hypothesis~\ref{hyp:Nash2}, for any $i_0 \in \mathcal{S}$, there exists a unique minimizer $\hat{\alpha}(t, x, \mu, p, i_0)$ of the mapping
\[
\alpha \mapsto \mathcal{H}(t, x, \mu, \alpha, p, q, i_0).
\]
Furthermore, $\hat{\alpha}$ is measurable, locally bounded, and Lipschitz continuous with respect to $x$, $\mu$, and $p$, uniformly in $t$, with a Lipschitz constant $\max\{c_L, c_\alpha\}$. Moreover,
\[
|\hat{\alpha}(t, x, \mu, p, i_0)| \leq C(1 + |p|),
\]
for any $(t, x, \mu, i_0) \in [0, T] \times \mathbb{R}^d \times \mathcal{P}_2(\mathbb{R}^d) \times \mathcal{S}$.
\end{lemma}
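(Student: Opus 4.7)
The plan is to follow the structure of the proof of Lemma~\ref{Lem:UniOpt}, with the mean field variable $\mu$ treated analogously to the state variable $x$ but metrized via $W_2$. The first observation is that the trace term $\operatorname{tr}(q^\prime \sigma(t,x,\mu,i_0))$ in \eqref{eq:Ham} is independent of $\alpha$, so the minimization in $\alpha$ reduces to minimizing $\alpha \mapsto p\cdot b(t,x,\mu,\alpha,i_0) + f(t,x,\mu,\alpha,i_0)$. Using the affine form of $b$ in $\alpha$ from \ref{hyp:A1} and the strong convexity of $f$ in $\alpha$ from \ref{hyp:A3}, this map is strongly convex in $\alpha$ with constant $\lambda$, yielding existence and uniqueness of the minimizer $\hat{\alpha}(t,x,\mu,p,i_0)$. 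Measurability follows exactly as in Lemma~\ref{Lem:UniOpt} from the measurable selection theorem \cite[Theorem 14.37]{rockafellar2009variational}.

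For the growth bound, I would compare the Hamiltonian at $\alpha=0$ and $\alpha=\hat{\alpha}$ using strong convexity, obtaining
\begin{equation*}
\lambda |\hat{\alpha}|^2 \leq -\langle \hat{\alpha}, \nabla_\alpha \mathcal{H}(t,x,\mu,0,p,q,i_0)\rangle.
\end{equation*}
Since $\nabla_\alpha \mathcal{H}(t,x,\mu,0,p,q,i_0) = \nabla_\alpha f(t,x,\mu,0,i_0) + b_2(t,i_0)^\prime p$, Cauchy--Schwarz together with the boundedness of $b_2$ by $c_L$ from \ref{hyp:A1} and the linear growth control on $\nabla_\alpha f$ from \ref{hyp:A4} yields $|\hat{\alpha}(t,x,\mu,p,i_0)|\leq C(1+|p|)$.

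For Lipschitz continuity in $(x,\mu,p)$, I would repeat the first-order optimality argument of Lemma~\ref{Lem:UniOpt}. Writing $\hat{\alpha}=\hat{\alpha}(t,x,\mu,p,i_0)$ and $\hat{\alpha}^\prime = \hat{\alpha}(t,x^\prime,\mu^\prime,p^\prime,i_0)$, the optimality inequalities at the two points, summed and rearranged by adding and subtracting $(\hat{\alpha}^\prime-\hat{\alpha})\cdot \nabla_\alpha \mathcal{H}(t,x,\mu,p,\hat{\alpha}^\prime,q,i_0)$, give
\begin{align*}
(\hat{\alpha}^\prime-\hat{\alpha})&\cdot\bigl(\nabla_\alpha \mathcal{H}(t,x,\mu,p,\hat{\alpha}^\prime,q,i_0)-\nabla_\alpha \mathcal{H}(t,x,\mu,p,\hat{\alpha},q,i_0)\bigr)\\
&\leq (\hat{\alpha}^\prime-\hat{\alpha})\cdot\bigl(\nabla_\alpha \mathcal{H}(t,x,\mu,p,\hat{\alpha}^\prime,q,i_0)-\nabla_\alpha \mathcal{H}(t,x^\prime,\mu^\prime,p^\prime,\hat{\alpha}^\prime,q,i_0)\bigr).
\end{align*}
Strong convexity of $f$ in $\alpha$ from \ref{hyp:A3} bounds the left-hand side below by $2\lambda |\hat{\alpha}^\prime-\hat{\alpha}|^2$. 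For the right-hand side, since $\nabla_\alpha \mathcal{H} = \nabla_\alpha f + b_2(t,i_0)^\prime p$, I would use the $L$-Lipschitz continuity of $\nabla_\alpha f$ in $x$, the Wasserstein Lipschitz bound on $\nabla_\alpha f$ in $\mu$ with constant $c_\alpha$ from \ref{hyp:A3}, and the bound $|b_2|\leq c_L$ to estimate this by $C|\hat{\alpha}^\prime-\hat{\alpha}|(|x-x^\prime|+W_2(\mu,\mu^\prime)+|p-p^\prime|)$. Dividing by $|\hat{\alpha}^\prime-\hat{\alpha}|$ gives the stated Lipschitz continuity, with constant controlled by $\max\{c_L,c_\alpha\}/\lambda$.

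The only subtlety beyond the proof of Lemma~\ref{Lem:UniOpt} is the handling of the measure argument, but this is routine once we notice that \ref{hyp:A3} provides precisely the Wasserstein-Lipschitz control of $\nabla_\alpha f$ in $\mu$ that mirrors the pointwise Lipschitz control in the Euclidean variables $x$ and $\alpha$. I expect no substantial obstacle; the proof is essentially a transcription of the single-agent argument to the mean field setting.
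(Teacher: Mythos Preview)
Your proposal is correct and follows essentially the same approach as the paper, which simply states that the proof is similar to Lemma~\ref{Lem:UniOpt} via Hypothesis~\ref{hyp:Nashm}~\ref{hyp:A3}. You have spelled out the details the paper omits, correctly identifying that the trace term is inert in $\alpha$, and that the only new ingredient beyond Lemma~\ref{Lem:UniOpt} is the Wasserstein-Lipschitz control of $\nabla_\alpha f$ in $\mu$ supplied by \ref{hyp:A3}.
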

\begin{proof}
	The proof is similar to Lemma~\ref{Lem:UniOpt} because of Hypothesis~\ref{hyp:Nashm}~\ref{hyp:A3}.
	\end{proof}
	Before explaining the relation between the solvability of our FBSDE and the Nash equilibrium of the regime switching mean field game, let us first establish two preliminary lemmas. These two lemmas establish the maximum principle of the stochastic control problem with regime switching given by \eqref{eq:player-sde2} - \eqref{eq:valuemu} albeit under some additional assumptions.  {Note that the proofs are similar to the treatments done in \cite{pham2009continuous, yong2012stochastic} but included below for completeness.}

	\begin{proposition}\label{Lem:FBSDEtoMean}
	For any given $\mathcal{F}^I$-adapted measure flow $\bm{\mu}$, if the process $(\hat{X}_t, \hat{P}_t, \hat{Q}_t, \hat{\Lambda}_t)$ solves the following FBSDE with regime switching
	\begin{align}\label{eq:fbsde2}
			&\hat{X}_t=x_0+\displaystyle \int_0^t b\left(s,X_s,\mu_s,\hat{\al}_s\left(s,\hat{X}_s,\mu_s,\hat{P}_{s-},I_{s-}\right),I_{s-}\right)ds+\int_0^t \sigma \left(s,\hat{X}_s,\mu_s,I_{s-}\right)dW_s, \nonumber \\
			&\hat{P}_t=\nabla_x g\left(\hat{X}_T,\mu_T,I_T\right)+ \displaystyle \int_t^T\nabla_x \mathcal{H}\left(s,\hat{X}_s,\mu_s,\hat{\al}_s\left(s,\hat{X}_s,\mu_s,\hat{P}_{s-},I_{s-}\right),\hat{P}_{s-},\hat{Q}_{s-},I_{s-}\right)ds \nonumber \\
			&\hspace{1cm}- \displaystyle \int_t^T \hat{Q}_{s-} dW_s-\int_t^T\hat{\Lambda}_s\cdot dM_s,
	\end{align}
	such that
	\beq\label{eq:inthatxpql}
	\mathbb{E}\left[\sup _{0 \leq t \leq T}\left(\left|\hat{X}_t\right|^2+\left|\hat{P}_t\right|^2\right)+\int_0^T\left|\hat{Q}_t\right|^2 d t\right]+\mathbb{E} \int_0^T\hat{\Lambda}_t^{\circ 2} \cdot d[M]_t<+\infty,
	\eeq
	then $\hat{\al}_t=\hat{\al}(t, \hat{X}_t,\mu_t, \hat{P}_{t-},I_{t-})  $ is the optimal control for the problem \eqref{eq:player-sde2}-\eqref{eq:valuemu}, that is, for any $\al \in \mathbb{A}$, $x_0 \in \mathbb{R}^d$ and $i_0 \in \cs$
	\beq\label{eq:hatal-opt}
	J(0,x_0,\bm{\mu},\hat{\alpha},i_0)\leq J(0,x_0,\bm{\mu},\alpha,i_0).
	\eeq
		\end{proposition}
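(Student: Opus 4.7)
The plan is to establish the result via the sufficient form of Pontryagin's stochastic maximum principle, adapted to the regime-switching jump-diffusion setting. Fix an arbitrary $\alpha \in \mathbb{A}$ and let $X$ denote the corresponding solution of \eqref{eq:player-sde2} with $X_0 = x_0$ and $I_0 = i_0$; write
\begin{equation*}
\Delta J := J(0, x_0, \bm{\mu}, \alpha, i_0) - J(0, x_0, \bm{\mu}, \hat{\alpha}, i_0),
\end{equation*}
and the goal is to show $\Delta J \geq 0$. First, by the convexity of $x \mapsto g(x, \mu_T, I_T)$ from Hypothesis~\ref{hyp:A2} together with the terminal condition for $\hat{P}_T$ in \eqref{eq:fbsde2},
\begin{equation*}
\mathbb{E}\bigl[g(X_T, \mu_T, I_T) - g(\hat{X}_T, \mu_T, I_T)\bigr] \geq \mathbb{E}\bigl[\hat{P}_T \cdot (X_T - \hat{X}_T)\bigr].
\end{equation*}

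The central step is to apply the jump Itô formula (in the spirit of \eqref{eq:F-Y-Ito}) to the product $\hat{P}_t \cdot (X_t - \hat{X}_t)$. The orthogonality relations \eqref{eq:mg-ortho} between $W$ and the chain martingales $M_{i_0 j_0}$ eliminate cross-variation terms, and the integrability bound \eqref{eq:inthatxpql} makes the $dW$ and $dM$ stochastic integrals true martingales of zero expectation. Substituting the backward dynamics from \eqref{eq:fbsde2}, and writing $b_s, \hat{b}_s, \sigma_s, \hat{\sigma}_s$ for the evaluations of $b, \sigma$ along $X$ and $\hat{X}$ respectively, one obtains
\begin{align*}
\mathbb{E}\bigl[\hat{P}_T \cdot (X_T - \hat{X}_T)\bigr]
= \mathbb{E}\int_0^T \Bigl[\, & \hat{P}_{s-} \cdot (b_s - \hat{b}_s) + \operatorname{tr}\bigl((\sigma_s - \hat{\sigma}_s)^{\intercal} \hat{Q}_{s-}\bigr) \\
& - (X_s - \hat{X}_s) \cdot \nabla_x \mathcal{H}(s, \hat{X}_s, \mu_s, \hat{\alpha}_s, \hat{P}_{s-}, \hat{Q}_{s-}, I_{s-}) \Bigr] ds.
\end{align*}

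Next, I would invoke the convexity of $f$ in $(x, \alpha)$ from \ref{hyp:A3}, the linearity of $b$ in $(x, \alpha)$ from \ref{hyp:A1}, and the first-order optimality condition $\nabla_\alpha \mathcal{H}(s, \hat{X}_s, \mu_s, \hat{\alpha}_s, \hat{P}_{s-}, \hat{Q}_{s-}, I_{s-}) = 0$ guaranteed by Lemma~\ref{Lem:UniOpt2} (i.e.\ $\nabla_\alpha f(s, \hat{X}_s, \mu_s, \hat{\alpha}_s, I_{s-}) + b_2(s, I_{s-})^{\intercal} \hat{P}_{s-} = 0$). Expanding $b_s - \hat{b}_s = b_1(X_s - \hat{X}_s) + b_2(\alpha_s - \hat{\alpha}_s)$, the $b_1$-piece cancels against the $\hat{P}_{s-}^{\intercal}\nabla_x b$ part of $\nabla_x \mathcal{H}$; the $b_2$-piece combined with $\nabla_\alpha f \cdot (\alpha_s - \hat{\alpha}_s)$ vanishes by the first-order condition; and the $\nabla_x f \cdot (X_s - \hat{X}_s)$ term from convexity of $f$ cancels the corresponding part of $\nabla_x \mathcal{H}$. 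What remains in the integrand is
\begin{equation*}
 \operatorname{tr}\bigl((\sigma_s - \hat{\sigma}_s)^{\intercal} \hat{Q}_{s-}\bigr) - \nabla_x \operatorname{tr}\bigl(\hat{Q}_{s-}^{\intercal} \sigma(s, \hat{X}_s, \mu_s, I_{s-})\bigr) \cdot (X_s - \hat{X}_s),
\end{equation*}
which is precisely the Bregman-type remainder of $x \mapsto \operatorname{tr}\bigl(\hat{Q}_{s-}^{\intercal} \sigma(s, x, \mu_s, I_{s-})\bigr)$.

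The main obstacle is verifying the non-negativity of this residual, which must be extracted from the convexity of $\sigma$ in $x$ stated in \ref{hyp:A7} through the linearity of the trace pairing against $\hat{Q}_{s-}$ (analogous to the argument used in \cite[Theorem 2.2]{carmona-delarue-siam} cited in Section~\ref{Sec:uniqueNE}). Once this sign is established, chaining the three inequalities --- convexity of $g$, the Itô identity, and convexity of $f$ --- yields
\begin{equation*}
\Delta J \;\geq\; \mathbb{E}\int_0^T \bigl(\text{non-negative integrand}\bigr)\, ds \;\geq\; 0,
\end{equation*}
which is exactly \eqref{eq:hatal-opt}, completing the argument.
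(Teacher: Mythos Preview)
Your proposal is correct and follows essentially the same route as the paper: apply the convexity of $g$ to bound the terminal difference by $\mathbb{E}[\hat{P}_T\cdot(X_T-\hat{X}_T)]$, expand this via the jump It\^o product formula using \eqref{eq:fbsde2} and the orthogonality \eqref{eq:mg-ortho}, and then close the inequality using joint convexity of $\mathcal{H}$ in $(x,\alpha)$ together with the first-order condition for $\hat{\alpha}$. The only cosmetic difference is that the paper bundles the cancellations into a single convexity statement for $\mathcal{H}$ (your equation-by-equation expansion of the $b_1$, $b_2$, $\nabla_x f$, and $\sigma$ pieces is the unpacked version of that), and you are in fact more explicit than the paper about the residual $\operatorname{tr}(\hat{Q}_{s-}^{\intercal}\sigma)$ term requiring the convexity of $\sigma$ in $x$ from \ref{hyp:A7}.
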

\begin{proof}
For any $\al \in \mathbb{A}$, let us denote $X_t$ to be the associated diffusion controlled by  $\al$. Then 
\begin{multline}\label{eq:Jhatal-Jal}
J(0,x_0,\bm{\mu},\hat{\alpha},i_0)- J(0,x_0,\bm{\mu},\alpha,i_0)=\mathbb{E}^{x_0,i_0}\left[\int_0^T f\left(t, \hat{X}_t,\mu_t, \hat{\alpha}_t,I_{t-}\right)-f\left(t, X_t, \mu_t, \alpha_t, I_{t-}\right) d t\right]\\
+\E^{x_0,i_0} \left[g\left(\hat{X}_T,\mu_T,I_T\right)-g\left(X_T,\mu_T,I_T\right)\right] ,
\end{multline}
{where $\E^{x_0,i_0}[\cdot] = \E[\cdot \mid X_0=x, I_0=i_0]$.}
In order to control the second term in \eqref{eq:Jhatal-Jal} we need some preliminary bounds on $(\hat{X}_T-X_T) \cdot \hat{P}_T$. To that effect, recall the optional quadratic covariations given by \eqref{eq:mg-ortho}. 
Then by generalized It\^o's formula \cite{Protter}
\begin{multline*}
\mathbb{E}^{x_0,i_0}\left[\left(\hat{X}_T-X_T\right) \cdot \hat{P}_T\right] 
= \mathbb{E}^{x_0,i_0}\left[\int_0^T\left(\hat{X}_t-X_t\right) \cdot d \hat{P}_t+\int_0^T \hat{P}_{t-} \cdot\left(d \hat{X}_t-d X_t\right)+{A_1}\right.\\
\left.+\sum_{0<t<T} \left[ \left(\hat{X}_t-X_t\right) \cdot \hat{P}_t-\left(\hat{X}_{t-}-X_{t-}\right) \cdot \hat{P}_{t-} -\left(\hat{X}_{t-}-X_{t-}\right) \cdot {\Delta\hat{P}_t}\right]\right],
\end{multline*}
{where $A_1=\int_0^T \operatorname{tr}[\hat{Q}_{t-}^{\prime}(\sigma(t,\hat{X}_t, \mu_t,I_{t-})-\sigma(t,X_t, \mu_t,I_{t-}))] d t$. Since $\hat{X}_t-X_t$ is continuous we have}
\begin{align}\label{eq:hatX-XdotY}
	\mathbb{E}^{x_0,i_0}\left[\left(\hat{X}_T-X_T\right) \cdot \hat{P}_T\right]&=\mathbb{E}^{x_0,i_0}\left[\int_0^T\left(\hat{X}_t-X_t\right) \cdot d \hat{P}_t+\int_0^T \hat{P}_{t-} \cdot\left(d \hat{X}_t-d X_t\right)+{A_1}\right].
\end{align}
Now, by convexity of $g$ we have
\begin{align}\label{eq:conv-g-bnd}
	 \mathbb{E}^{x_0,i_0}\left[g\left(\hat{X}_T,\mu_T,I_T\right)-g\left(X_T,\mu_T,I_T\right)\right]& 
	\leq  \mathbb{E}^{x_0,i_0}\left[\left(\hat{X}_T-X_T\right) \cdot \nabla_x g\left(\hat{X}_T,\mu_T,I_T\right)\right]\nonumber\\
&	=\mathbb{E}^{x_0,i_0}\left[\left(\hat{X}_T-X_T\right) \cdot \hat{P}_T\right] .
\end{align}
Plugging \eqref{eq:hatX-XdotY} in \eqref{eq:conv-g-bnd} we get
\begin{multline}\label{eq:Eghat-g}
	\mathbb{E}^{x_0,i_0}\left[g\left(\hat{X}_T,\mu_T,I_T\right)-g\left(X_T,\mu_T,I_T\right)\right]\leq  \mathbb{E}^{x_0,i_0}\left[\int_0^T\left(\hat{X}_t-X_t\right) \cdot d \hat{P}_t\right.\\
	+\left.\int_0^T \hat{P}_{t-} \cdot\left(d \hat{X}_t-d X_t\right)+{A_1}\right].
\end{multline}
{Also notice that from \eqref{eq:inthatxpql}, Cor.~\ref{cor:sigma-bd-2} and square integrability of $\al$
\begin{equation*}
	\int_0^T \langle \hat{X}_t-X_t,\hat{Q}_{t-}dW_t\rangle ,\quad \int_0^T \langle \hat{X}_t-X_t,\hat{\Lambda}_t\cdot dM_t\rangle ,\quad \int_0^T \langle \hat{P}_{t-},\sigma\left(t,\hat{X}_t,\mu_t,I_{t-}\right) dW_t\rangle, 
\end{equation*}
are square integrable martingales. Therefore, using the definition \eqref{eq:fbsde2} in \eqref{eq:Eghat-g} we obtain}
\begin{align}\label{eq:ghat-g}
	\mathbb{E}^{x_0,i_0}&\left[g\left(\hat{X}_T,\mu_T,I_T\right)-g\left(X_T,\mu_T,I_T\right)\right]\nonumber\\
	&\leq \mathbb{E}^{x_0,i_0}\left[\int_0^T\left(\hat{X}_t-X_t\right) \cdot\left(-\nabla_x \mathcal{H}\left(t, \hat{X}_t,\mu_t, \hat{\alpha}_t, \hat{P}_{t-}, \hat{Q}_{t-},I_{t-}\right)\right) d t+{A_1+A_2}\right] ,
\end{align}
{where $A_2=\int_0^T \hat{P}_{t-} \cdot(b(t,\hat{X}_t,\mu_t, \hat{\alpha}_t,I_{t-})-b(t,X_t, \mu_t,\alpha_t,I_{t-})) d t$.} For the first term in the right hand side of \eqref{eq:Jhatal-Jal} we obtain by definition of $\mathcal{H}$
\begin{multline}\label{eq:fhat-f}
	\mathbb{E}^{x_0,i_0}\left[  \int_0^T f\left(t, \hat{X}_t,\mu_t, \hat{\alpha}_t,I_{t-}\right)-f\left(t, X_t,\mu_t, \alpha_t,I_{t-}\right) d t\right]\\
	=\mathbb{E}^{x_0,i_0}\left[\int_0^T \mathcal{H}\left(t, \hat{X}_t, \hat{\alpha}_t, \hat{P}_{t-}, \hat{Q}_{t-},I_{t-}\right)-\mathcal{H}\left(t, X_t, \alpha_t, \hat{P}_{t-}, \hat{Q}_{t-},I_{t-}\right) d t -{A_1-A_2}\right] .
\end{multline}
By adding \eqref{eq:ghat-g} and \eqref{eq:fhat-f} into \eqref{eq:Jhatal-Jal}, we obtain
\begin{multline}\label{eq:J-bnd}
	J(0,x_0,\bm\mu,\hat{\alpha},i_0)- J(0,x_0,\bm\mu,\alpha,i_0) \leq  \mathbb{E}^{x_0,i_0}\left[ \int_0^T \mathcal{H}\left(t, \hat{X}_t,\mu_t, \hat{\alpha}_t, \hat{P}_{t-}, \hat{Q}_{t-},I_{t-}\right)\right.\\
	\left.-\mathcal{H}\left(t, X_t,\mu_t, \alpha_t, \hat{P}_{t-}, \hat{Q}_{t-},I_{t-}\right) d t 
	 -\int_0^T\left(\hat{X}_t-X_t\right) \cdot \nabla_x \mathcal{H}\left(t, \hat{X}_t,\mu_t, \hat{\alpha}_t, \hat{P}_{t-}, \hat{Q}_{t-},I_{t-}\right) d t\right].
\end{multline}
{By the optimality of $\hat{\al}$ we know that $(\hat{\al}_t-\al_t) \cdot \nabla_\al \mathcal{H}(t, \hat{X}_t,\mu_t, \hat{\alpha}_t, \hat{P}_{t-}, \hat{Q}_{t-},I_{t-}) \leq 0$ for all $t \in [0,T]$. Thus by noting that $\mathcal{H}$ is convex in $(x,\al)$ we have}
\begin{multline}\label{eq:H-bnd}
	\mathbb{E}^{x_0,i_0}\left[\int_0^T \mathcal{H}\left(t, \hat{X}_t,\mu_t, \hat{\alpha}_t, \hat{P}_{t-}, \hat{Q}_{t-},I_{t-}\right)-\mathcal{H}\left(t, X_t,\mu_t, \alpha_t, \hat{P}_{t-}, \hat{Q}_{t-},I_{t-}\right) d t\right. \\
	 \left.{-\int_0^T\left(\hat{X}_t-X_t,\hat{\al}_t-\al_t\right) \cdot \nabla_{(x,\al)} \mathcal{H}\left(t, \hat{X}_t,\mu_t, \hat{\alpha}_t, \hat{P}_{t-}, \hat{Q}_{t-},I_{t-}\right) d t}\right]\leq 0.
\end{multline}
Finally from \eqref{eq:J-bnd} and \eqref{eq:H-bnd} we get our desired result~\eqref{eq:hatal-opt}.
	\end{proof}
		Similarly to \cite[Prop 2.5]{carmona-delarue-siam}, we can derive a more general form of Proposition~\ref{Lem:FBSDEtoMean}.
	\begin{lemma}\label{Lem:FBSDEtoMeanGen}
		Under the same assumptions and notations as in Proposition~\ref{Lem:FBSDEtoMean} consider additionally another $\mathcal{F}^I$-adapted random measure flow $\bm\mu'$ and the controlled diffusion process $X^{\prime}=\left(X_t^{\prime}\right)_{0 \leq t \leq T}$ defined by
		$$
		X_t^{\prime}=x_0^{\prime}+\int_0^t b\left(s, X_s^{\prime}, \mu_t^{\prime}, \beta_s,I_{s-}\right) d s+\int_0^t\sigma(t, X_s,\mu_s^{\prime},I_{s-})dW_s, \quad t \in[0, T]
		$$
		{for a control $\beta\in \mathbb{A}$}. Then{
			\begin{multline*}
				J(0,x_0,\bm\mu,\hat{\alpha},i_0)+\left\langle x_0^{\prime}-x_0, P_0\right\rangle+\lambda \mathbb{E} \int_0^T\left|\beta_t-\hat{\alpha}_t\right|^2 d t 
				\\\leq J(0,x_0^{\prime},\bm\mu,\left[\beta, \bm\mu^{\prime}\right],i_0)
				+\mathbb{E}^{x_0^\prime,i_0}\left[\int_0^T\left\langle b_0\left(t, \mu_t, I_{t-}\right)-b_0\left(t, \mu_t^{\prime}, I_{t-}\right), P_{t-}\right\rangle d t\right.\\
				\left.+\int_0^T \operatorname{tr}\left[Q_t^{\top}\left(\sigma\left(t,X_t^\prime, \mu_t,I_{t-}\right)-\sigma\left(t,X_t^\prime, \mu_t^\prime,I_{t-}\right)\right) \right] d t\right] ,
		\end{multline*}}
		where {
			\begin{equation*}
			J(0,x_0^{\prime},\bm\mu,\left[\beta, \bm\mu^{\prime}\right],i_0)=\mathbb{E}^{x_0^{\prime},i_0}\left[g\left(X_T^{\prime}, \mu_T,I_T\right)+\int_0^T f\left(t, X_t^{\prime}, \mu_t, \beta_t,I_{t-}\right) d t\right] .
			\end{equation*}}
	\end{lemma}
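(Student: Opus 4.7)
The plan is to mirror the strategy of Proposition~\ref{Lem:FBSDEtoMean}, tracking three new effects that distinguish the present claim: the perturbed initial condition $x_0'$, the strong-convexity gap between a generic admissible control $\beta$ and the optimizer $\hat{\alpha}$, and the second measure flow $\bm\mu'$ that appears in the dynamics of $X'$ but \emph{not} in its cost. First I would decompose
$$
J(0,x_0',\bm\mu,[\beta,\bm\mu'],i_0) - J(0,x_0,\bm\mu,\hat\alpha,i_0) = \mathbb{E}\bigl[g(X_T',\mu_T,I_T) - g(\hat X_T,\mu_T,I_T)\bigr] + \mathbb{E}\int_0^T\bigl(f(t,X_t',\mu_t,\beta_t,I_{t-}) - f(t,\hat X_t,\mu_t,\hat\alpha_t,I_{t-})\bigr)\,dt,
$$
lower-bound the $g$-difference by $\mathbb{E}[(X_T' - \hat X_T)\cdot \hat P_T]$ using convexity of $g(\cdot,\mu_T,I_T)$ from \ref{hyp:A2}, and expand $(X_T' - \hat X_T)\cdot \hat P_T$ by the generalized It\^o formula analogous to \eqref{eq:hatX-XdotY}. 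Continuity of $X' - \hat X$ kills the jump part of $\hat P$ in the bracket, while \eqref{eq:inthatxpql}, Corollary~\ref{cor:sigma-bd-2} and admissibility of $\beta$ render the stochastic integrals against $dW$ and $dM$ true martingales, so in expectation only the boundary term $(x_0' - x_0)\cdot P_0$ and the Lebesgue integrals survive.

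Next, by the definition \eqref{eq:Ham} of $\mathcal{H}$, I would rewrite the running-cost difference as an $\mathcal{H}$-difference minus the corresponding $\hat P_{t-}\cdot b$ and $\operatorname{tr}(\hat Q_{t-}^{\top}\sigma)$ differences, all evaluated at $\mu_t$. Adding this to the It\^o expansion of $(X_T' - \hat X_T)\cdot \hat P_T$, which naturally contains the same $\hat P_{t-}\cdot b$ and $\operatorname{tr}(\hat Q_{t-}^{\top}\sigma)$ integrals but evaluated at $\mu_t'$ because the dynamics of $X'$ use $\bm\mu'$, cancels the $\mu_t$ pieces. Using Hypothesis~\ref{hyp:A1} one has $b(t,X_t',\mu_t',\beta_t,I_{t-}) - b(t,X_t',\mu_t,\beta_t,I_{t-}) = b_0(t,\mu_t',I_{t-}) - b_0(t,\mu_t,I_{t-})$, and the analogous purely-$\mu$ splitting applies to $\sigma$; what remains after cancellation is precisely the two measure-discrepancy correctors involving $b_0(t,\mu_t',I_{t-}) - b_0(t,\mu_t,I_{t-})$ and $\sigma(t,X_t',\mu_t',I_{t-}) - \sigma(t,X_t',\mu_t,I_{t-})$, which become the right-hand-side correctors in the claim after transposing signs.

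Finally I would apply strong convexity of $\mathcal{H}$ at $(\hat X_t,\hat\alpha_t)$ — convexity in $x$ inherited from \ref{hyp:A2}, \ref{hyp:A3}, \ref{hyp:A7} and strong convexity in $\alpha$ with modulus $\lambda$ from \ref{hyp:A3} (since $b$ is affine in $\alpha$ and $\sigma$ is $\alpha$-independent) — together with the first-order optimality $\nabla_\alpha\mathcal{H}(t,\hat X_t,\mu_t,\hat\alpha_t,\hat P_{t-},\hat Q_{t-},I_{t-}) = 0$ granted by Lemma~\ref{Lem:UniOpt2}, to deduce
$$
\mathcal{H}(t,X_t',\mu_t,\beta_t,\hat P_{t-},\hat Q_{t-},I_{t-}) - \mathcal{H}(t,\hat X_t,\mu_t,\hat\alpha_t,\hat P_{t-},\hat Q_{t-},I_{t-}) - (X_t' - \hat X_t)\cdot\nabla_x\mathcal{H}(t,\hat X_t,\mu_t,\hat\alpha_t,\hat P_{t-},\hat Q_{t-},I_{t-}) \geq \lambda|\beta_t - \hat\alpha_t|^2.
$$
Integrating in $t$ and combining with the previous step yields the claimed inequality: the $\nabla_x\mathcal{H}$-piece cancels against the $\int_0^T (X_t'-\hat X_t)\cdot d\hat P_t$ contribution from the It\^o expansion, the $\lambda|\beta_t-\hat\alpha_t|^2$ piece is the strong-convexity term on the left, and the boundary contribution $(x_0'-x_0)\cdot P_0$ supplies the $\langle x_0'-x_0, P_0\rangle$ term.

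The main obstacle is careful sign bookkeeping at the three places where $\bm\mu$ versus $\bm\mu'$ enters — the drift of $X'$ in the It\^o expansion, the continuous quadratic covariation $[X' - \hat X, \hat P]^{c}$ that picks up $\sigma(t,X_t',\mu_t',I_{t-})$, and the $\mathcal{H}$-based rewrite of $f$ which uses $\mu_t$ — so that the $b_0$- and $\sigma$-correctors end up on the right with exactly the signs stated in the lemma. This relies crucially on the fact, from \ref{hyp:A1}, that the $\mu$-dependence of $b$ enters only through the additive term $b_0(t,\mu,i_0)$, so that the $b$-splitting produces a \emph{pure} measure term free of $X_t'$ and $\beta_t$.
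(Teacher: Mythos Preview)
Your proposal is correct and follows essentially the same route as the paper, which states only that the proof is ``similar to Proposition~\ref{Lem:FBSDEtoMean} and employing Hypothesis~\ref{hyp:A3}.'' You have correctly identified the three modifications needed on top of that proposition---the boundary term $\langle x_0'-x_0,P_0\rangle$ from the It\^o expansion, the $\lambda|\beta_t-\hat\alpha_t|^2$ term coming from the strong convexity in \ref{hyp:A3}, and the $b_0$/$\sigma$ measure-discrepancy correctors arising because $X'$ is driven by $\bm\mu'$ while the cost and the Hamiltonian are evaluated at $\bm\mu$---and your bookkeeping of how they arise is accurate.
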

	\begin{proof}
		The proof is similar to Proposition~\ref{Lem:FBSDEtoMean} and employing Hypothesis~\ref{hyp:A3}.
	\end{proof}
	Define the {generalized Hamiltonian}
	\begin{align*}
		\mathcal{G}(t,x,\mu,\al,p,M,i_0)= {p\cdot b(t,x,\mu,\al,i_0) }+f(t,x,\mu,\al,i_0)+\frac{1}{2}\operatorname{tr}{\left(\sigma^{\prime} (t,x,\mu,i_0) M\sigma(t,x,\mu,i_0)\right)}.
		\end{align*}
	{Also recall the notations in \ref{not:M}.} Then the following lemma holds.
\begin{proposition}\label{lem:MeantoFBSDE}
	Suppose for a given $\mathcal{F}^I$-adapted measure flow $\bm{\mu}$ the value function $v^{\bm\mu}(t,x,i_0)$ given by \eqref{eq:player-sde2}-\eqref{eq:valuemu} is first differentiable in $t$ and third differentiable in $x$. If there exists an optimal control $\tilde{\beta} \in \mathbb{A}$ to \eqref{eq:valuemu} with associated controlled diffusion $\tilde{X}$, then
	\begin{multline}\label{eq:cg}
	\mathcal{G}\left(t, \tilde{X}_t,\mu_t, \tilde{\beta}_t, \nabla_x v^{\bm\mu}\left(t, \tilde{X}_t,I_{t-}\right), \nabla_x^2 v^{\bm\mu}\left(t, \tilde{X}_t,I_{t-}\right),I_{t-}\right)\\
	=\min _{a \in A} \mathcal{G}\left(t, \tilde{X}_t,\mu_t, a, \nabla_x v^{\bm\mu}\left(t, \tilde{X}_t,I_{t-}\right), \nabla_x^2 v^{\bm\mu}\left(t, \tilde{X}_t,I_{t-}\right),I_{t-}\right).
	\end{multline}
	Furthermore
	\begin{align*}
	\left(P_t, Q_t\right)=\left(\nabla_x v^{\bm\mu}\left(t, \tilde{X}_t,I_{t}\right), \nabla_x^2 v^{\bm\mu}\left(t, \tilde{X}_t,I_{t}\right) \sigma\left(t,\tilde{X}_t, \mu_t,I_{t}\right)\right), \\
	\Lambda_t\cdot dM_t=\sum_{k_0 \neq j_0}\left(\nabla_x v^{\bm\mu}(t, \tilde{X}_t,k_0)-\nabla_x v^{\bm\mu}(t, \tilde{X}_t,j_0)\right)dM_{k_0 j_0}(t),
\end{align*}
	solves the adjoint BSDE 
	\begin{equation}\label{eq:BSDE}
		P_t=\nabla_x g\left(x_T,\mu_T,I_T\right)+ \displaystyle \int_t^T\nabla_x \mathcal{H}(s,X_s,\mu_s,\tilde{\beta}_s,P_{t-},Q_{t-},I_{s-})ds- \displaystyle \int_t^T Q_{s-} dW_s-\int_t^T\Lambda_{s}\cdot dM_s.
		\end{equation}
	\end{proposition}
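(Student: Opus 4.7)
The plan is to combine the dynamic programming principle for $v^{\bm{\mu}}$, a verification-style It\^o expansion along the optimal trajectory to obtain \eqref{eq:cg}, and an It\^o expansion of $\nabla_x v^{\bm{\mu}}(t,\tilde{X}_t,I_t)$ paired with the $x$-derivative of the HJB equation to recover the adjoint BSDE \eqref{eq:BSDE}.

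First I would write down the Hamilton--Jacobi--Bellman equation for $v^{\bm{\mu}}$. Since $\bm{\mu}$ is $\mathcal{F}^I$-adapted and $v^{\bm{\mu}}$ is smooth by assumption, the dynamic programming principle yields pathwise in $\mathcal{F}^I$
\begin{align*}
0 &= \partial_t v^{\bm{\mu}}(t,x,i_0) + \min_{a\in A}\mathcal{G}\bigl(t,x,\mu_t,a,\nabla_x v^{\bm{\mu}}(t,x,i_0),\nabla_x^2 v^{\bm{\mu}}(t,x,i_0),i_0\bigr) \\
&\quad + \sum_{j_0\in\mathcal{S}} q_{i_0 j_0}\bigl(v^{\bm{\mu}}(t,x,j_0)-v^{\bm{\mu}}(t,x,i_0)\bigr),
\end{align*}
with terminal datum $v^{\bm{\mu}}(T,x,i_0)=g(x,\mu_T,i_0)$. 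Applying the generalized It\^o formula to $v^{\bm{\mu}}(t,\tilde{X}_t,I_t)$ along the optimally controlled trajectory $\tilde{X}$ (the regime-switching jumps being absorbed into the $q_{i_0 j_0}$ term plus a $dM$-martingale), taking expectations, and substituting the HJB identity gives
\[
v^{\bm{\mu}}(0,x_0,I_0) = J(0,x_0,\bm{\mu},\tilde{\beta},I_0) - \mathbb{E}\int_0^T\!\!\Bigl[\mathcal{G}\bigl(t,\tilde{X}_t,\mu_t,\tilde{\beta}_t,\nabla_x v^{\bm{\mu}},\nabla_x^2 v^{\bm{\mu}},I_{t-}\bigr) - \min_{a\in A}\mathcal{G}(\cdots)\Bigr]\,dt.
\]
Optimality of $\tilde{\beta}$ forces the integrand, which is nonnegative by definition of the minimum, to vanish $dt\otimes d\mathbb{P}$-almost everywhere, giving \eqref{eq:cg}.

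Next I would differentiate the HJB equation in $x$: the envelope theorem, together with the unique minimizer property from Lemma~\ref{Lem:UniOpt2}, discards the contribution from differentiating through $\tilde{a}^*=\hat{\alpha}(t,x,\mu_t,\nabla_x v^{\bm{\mu}},i_0)$, leaving a linear regime-switching PDE for $\nabla_x v^{\bm{\mu}}$. I then apply the generalized It\^o formula with jumps to $P_t := \nabla_x v^{\bm{\mu}}(t,\tilde{X}_t,I_t)$. The Brownian martingale coefficient is $\nabla_x^2 v^{\bm{\mu}}(t,\tilde X_t,I_t)\,\sigma(t,\tilde X_t,\mu_t,I_t)$, which I identify as $Q_t$; the jumps of $I$ produce the purely discontinuous martingale $\Lambda_t\cdot dM_t$ stated in the proposition, together with a drift-compensator of the form $\sum_{j_0} q_{I_{t-}j_0}\bigl(\nabla_x v^{\bm{\mu}}(\cdot,j_0)-\nabla_x v^{\bm{\mu}}(\cdot,I_{t-})\bigr)$. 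Substituting the differentiated HJB into the continuous drift and using the identity $\nabla_x\mathcal{H}(t,x,\mu,\alpha,p,q,i_0) = (\nabla_x b)^{\prime}p + \nabla_x f + \mathrm{tr}\bigl(q^{\prime}\nabla_x\sigma\bigr)$ evaluated at $p=P_{t-},\, q=Q_{t-}$, the remaining drift of $dP_t$ collapses to $-\nabla_x\mathcal{H}(t,\tilde{X}_t,\mu_t,\tilde{\beta}_t,P_{t-},Q_{t-},I_{t-})$. The terminal condition $P_T=\nabla_x g(\tilde{X}_T,\mu_T,I_T)$ follows from differentiating the terminal datum of $v^{\bm{\mu}}$, so integrating backward from $t$ to $T$ reproduces \eqref{eq:BSDE}.

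The main obstacle is the careful cancellation of third-order terms: the piece $\tfrac12\mathrm{tr}(\sigma^{\prime}\nabla_x^3 v^{\bm{\mu}}\sigma)$ produced by It\^o's formula must exactly cancel the matching term generated when differentiating the $\tfrac12\mathrm{tr}(\sigma^{\prime}\nabla_x^2 v^{\bm{\mu}}\sigma)$ piece of $\mathcal{G}$ in $x$, leaving behind precisely $\mathrm{tr}(Q_{t-}^{\prime}\nabla_x\sigma)$, which is the partial $x$-derivative of $\mathrm{tr}(q^{\prime}\sigma)$ with $q$ held fixed. The assumed third-order $x$-regularity of $v^{\bm{\mu}}$ is exactly what legitimizes this manipulation. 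A secondary technicality is verifying that the stochastic integrals against $W$ and $M$ are true (not merely local) martingales, which follows from the boundedness of $\sigma$ from Corollary~\ref{cor:sigma-bd-2}, the linear growth bound on $\hat{\alpha}$ from Lemma~\ref{Lem:UniOpt2}, and the regularity of $v^{\bm{\mu}}$.
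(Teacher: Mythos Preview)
Your proposal is correct and follows essentially the same architecture as the paper: HJB for $v^{\bm\mu}$, an It\^o/verification argument along $(\tilde X,\tilde\beta)$ to obtain \eqref{eq:cg}, and then It\^o applied to $\nabla_x v^{\bm\mu}(t,\tilde X_t,I_t)$ combined with a differentiated form of the HJB to recover \eqref{eq:BSDE}. The only substantive difference is in how the gradient identity (the paper's \eqref{eq:partxvG}) is obtained: you differentiate the HJB equation $\partial_t v^{\bm\mu}+\min_a\mathcal G+\sum q_{i_0j_0}(\cdots)=0$ in $x$ and invoke the envelope theorem to drop the dependence on the minimizer; the paper instead freezes the control at $\tilde\beta_t$, observes from the HJB that $x\mapsto \partial_t v^{\bm\mu}(t,x,I_{t-})+\mathcal G(t,x,\mu_t,\tilde\beta_t,\nabla_x v^{\bm\mu},\nabla_x^2 v^{\bm\mu},I_{t-})+\sum q_{I_{t-}j_0}(\cdots)$ is nonnegative with equality at $x=\tilde X_t$, and then takes the first-order optimality condition in $x$ at that point. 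Both routes yield the same equation once one uses $\tilde\beta_t=\hat\alpha(t,\tilde X_t,\mu_t,\nabla_x v^{\bm\mu},I_{t-})$ from \eqref{eq:cg} and Lemma~\ref{Lem:UniOpt2}; your envelope argument is the textbook derivation, while the paper's ``minimum over $x$'' trick sidesteps any regularity requirement on $\hat\alpha$ as a function of $x$. Your explicit flagging of the third-order cancellation and the true-martingale check are points the paper leaves implicit.
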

\begin{proof}
	Note that the HJB equation of the stochastic control problem \eqref{eq:player-sde2}-\eqref{eq:valuemu} is given by:
		\begin{align}\label{eq:v-mu-hjb}
		\partial_t v^{\bm\mu}+\inf _{a \in A}\left[\mathcal{G}\left(t, x,\mu, a, \nabla_x v^{\bm\mu}, \nabla_x^2 v^{\bm\mu},i_0\right)\right]+\sum_{j_{0} \in \mathcal{S}} q_{i_{0}, j_{0}}\left(v^{\bm\mu}\left(t, x, j_{0}\right)-v^{\bm\mu}\left(t, x, i_{0}\right)\right)=0.
	\end{align}
	{By It\^o's formula it is readily checked that the optimal trajectory $(\tilde{X}, \tilde{\beta})$ satisfies
	\begin{align}\label{eq:v-mu-opt}
		0=&\partial_t v^{\bm\mu}(t, \tilde{X}_t, I_{t-})+\mathcal{G}\left(t, \tilde{X}_t,\mu_t, \tilde{\beta}_t, \nabla_x v^{\bm\mu}\left(t, \tilde{X}_t,I_{t-}\right), \nabla_x^2 v^{\bm\mu}\left(t, \tilde{X}_t,I_t\right),I_{t-}\right)\nonumber \\
		&+\sum_{j_{0} \in \mathcal{S}} q_{I_{t-}, j_{0}}\left(v^{\bm\mu}\left(t, \tilde{X}_t, j_{0}\right)-v^{\bm\mu}\left(t, \tilde{X}_t, I_{t-}\right)\right).
		\end{align}}
		{Thus from \eqref{eq:v-mu-hjb} and \eqref{eq:v-mu-opt} we get \eqref{eq:cg}.}
	{Since $v^{\bm\mu}$ solves \eqref{eq:v-mu-hjb} we get that for any $x \in \mathbb{R}^d$
		\begin{align}\label{eq:v-mu-hjb-2}
			0	\leq& \partial_t v^{\bm\mu}(t, x, I_{t-})+\mathcal{G}\left(t, x,\mu_t, \tilde{\beta}_t, \nabla_x v^{\bm\mu}\left(t, x,I_{t-}\right), \nabla_x^2 v^{\bm\mu}\left(t, x,I_{t-}\right),I_{t-}\right)\nonumber \\
			&+\sum_{j_{0} \in \mathcal{S}} q_{I_{t-}, j_{0}}\left(v^{\bm\mu}\left(t, x, j_{0}\right)-v^{\bm\mu}\left(t, x, I_{t-}\right)\right).
		\end{align}}
	By the differentiability of $v^{\bm\mu}$ in $(t,x)$ and comparing \eqref{eq:v-mu-opt}-\eqref{eq:v-mu-hjb-2}, we have by the optimality condition that
	\begin{align*}
		\nabla_x\left(\partial_t v^{\bm\mu}(t, x, I_{t-})+\mathcal{G}\left(t, x,\mu_t, \tilde{\beta}_t, \nabla_x v^{\bm\mu}\left(t, x,I_{t-}\right), \nabla_x^2 v^{\bm\mu}\left(t, x,i_0\right),I_{t-}\right)\right.\\
		\left.\left.+\sum_{j_{0} \in \mathcal{S}} q_{I_{t-}, j_{0}}\left(v^{\bm\mu}\left(t, x, j_{0}\right)-v^{\bm\mu}\left(t, x, I_{t-}\right)\right)\right) \right|_{x=\tilde{X}_t}=0.
		\end{align*}
Recalling the expression of $\mathcal{G}$ and $\mathcal{H}$ the above equation becomes
	\begin{multline}\label{eq:partxvG}
	\partial_t \nabla_x v^{\bm\mu}\left(t, \tilde{X}_t,I_{t-}\right)+  \nabla_x^2 v^{\bm\mu}\left(t, \tilde{X}_t,I_{t-}\right) b\left(t,\tilde{X}_t,\mu_t, \tilde{\beta}_t,I_{t-}\right)\\+\frac{1}{2} \operatorname{tr}\left(\sigma \sigma^{\prime}\left(t,\tilde{X}_t,\mu_t,I_{t-}\right)\nabla_x^3 v^{\bm\mu}\left(t, \tilde{X}_t,I_{t-}\right)\right) +\nabla_x \mathcal{H}\left(t, \tilde{X}_t, \tilde{\beta}_t, {P_{t-}, Q_{t-}}\right)\\
	+\sum_{j_{0} \in \mathcal{S}} q_{I_{t-}, j_{0}}\left(\nabla_x v^{\bm\mu}\left(t, \tilde{X}_t, j_{0}\right)-\nabla_x v^{\bm\mu}\left(t, \tilde{X}_t, I_{t-}\right)\right)=0 .
\end{multline}
Applying It\^o's formula to $\nabla_x v^{\bm\mu}(t, \tilde{X},{I_{t}})$ we have {
\begin{multline}\label{eq:itopartxv}
	\nabla_x g\left(x_T,\mu_T,I_T\right)-\nabla_x v^{\bm\mu}(t, \tilde{X}_t,I_t)=\int_t^T\left[\partial_t \nabla_x v^{\bm\mu}\left(t, \tilde{X}_t,I_{t-}\right)\right.\\
	+\nabla_x^2 v^{\bm\mu}\left(t, \tilde{X}_t,I_{t-}\right) b\left(t,\tilde{X}_t,\mu_t, \tilde{\beta}_t,I_{t-}\right)+\frac{1}{2} \operatorname{tr}\left(\sigma \sigma^{\prime}\left(t,\tilde{X}_t,\mu_t,I_{t-}\right)\nabla_x^3 v^{\bm\mu}\left(t, \tilde{X}_t,I_{t-}\right)\right) \\
	\left.+\sum_{j_{0} \in \mathcal{S}} q_{I_{t-}, j_{0}}\left(\nabla_x v^{\bm\mu}\left(t, \tilde{X}_t, j_{0}\right)-\nabla_x v^{\bm\mu}\left(t, \tilde{X}_t, I_{t-}\right)\right)\right]dt
	+\int_t^T Q_{t-}dW_t+	\int_t^T\Lambda_t\cdot dM_t.
	\end{multline}
	Combining \eqref{eq:partxvG} and \eqref{eq:itopartxv} we have that \eqref{eq:BSDE} holds.	}

	\end{proof}

	\begin{proposition}\label{prop:fbsde-nasheq}
		Suppose the value function $v^{\mathcal{L}^1(X)}(t,x,i_0)$ is first differentiable in $t$ and third differentiable in $x$. Then the  process $(X_t, P_t, Q_t, \Lambda_t)$ solves the following mean-field FBSDE with regime switching
		\begin{align}\label{eq:fbsde}
				&X_t=x_0+\displaystyle \int_0^t b\left(s,X_s,\mathcal{L}^1(X_{s}),\hat{\al}(s, X_s,\mathcal{L}^1(X_{s}), P_{s-},I_{s-})  ,I_{s-}\right)ds+\int_0^t \sigma \left(s,X_s,\mathcal{L}^1(X_{s}),I_{s-}\right)dW_s, \nonumber\\
				&P_t=\nabla_x g\left(X_T,\mathcal{L}^1(X_{T}),I_T\right)+ \displaystyle \int_t^T\nabla_x \mathcal{H}\left(s,X_s,\mathcal{L}^1(X_{s}),\hat{\al}(s, X_s,\mathcal{L}^1(X_{s}), P_{s-},I_{s-}),P_{s-},Q_{s-},I_{s-}\right)ds\nonumber\\
				&\hspace{1cm}- \displaystyle \int_t^T Q_{s-} dW_s-\int_t^T\Lambda_{s-}\cdot dM_s,\tag{MV-FBSDE}
		\end{align}
		{such that
		\begin{equation*}
		\mathbb{E}\left[\sup _{0 \leq t \leq T}\left(\left|X_t\right|^2+\left|P_t\right|^2\right)+\int_0^T\left|Q_t\right|^2 d t\right]+\mathbb{E} \int_0^T\hat{\Lambda}_t^{\circ 2} \cdot d[M]_t<+\infty,
		\end{equation*}}
		if and only if the admissible control {$\hat{\al}_t=\hat{\al}(t, X_t,\mathcal{L}^1(X_{t}), P_{t-},I_{t-})$} is a Nash equilibrium of the mean-field stochastic differential game given by Definition~\ref{def:mfg}. 
		\end{proposition}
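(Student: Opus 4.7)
The plan is to prove the two implications separately by combining the sufficient and necessary stochastic maximum principles of Propositions~\ref{Lem:FBSDEtoMean} and \ref{lem:MeantoFBSDE} with the fixed-point consistency condition (2) of Definition~\ref{def:mfg}. The overarching idea is that, for any fixed $\mathcal{F}^I$-adapted measure flow $\bm\mu$, optimality in the frozen-measure control problem~\eqref{eq:player-sde2}--\eqref{eq:valuemu} is characterized by the FBSDE~\eqref{eq:fbsde2}; the Nash property then amounts to imposing the closure $\bm\mu = \mathcal{L}^1(X)$ on the forward component, which is exactly what passes from \eqref{eq:fbsde2} to \eqref{eq:fbsde}.

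For the implication \eqref{eq:fbsde} $\Rightarrow$ Nash equilibrium, I would start from a solution $(X,P,Q,\Lambda)$ of \eqref{eq:fbsde} and define $\hat{\bm\mu}_t := \mathcal{L}^1(X_t)$ together with $\hat\alpha_t := \hat\al(t, X_t, \hat{\bm\mu}_t, P_{t-}, I_{t-})$. Substituting $\bm\mu = \hat{\bm\mu}$ into \eqref{eq:fbsde} recovers the frozen-measure FBSDE~\eqref{eq:fbsde2}, so Proposition~\ref{Lem:FBSDEtoMean} directly yields the optimality condition (1) of Definition~\ref{def:mfg}. The consistency condition (2) is then tautological: by construction $X$ is the state process driven by $\hat\alpha$ under measure flow $\hat{\bm\mu}$, so its conditional law coincides with $\hat{\bm\mu}_t$.

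For the converse, starting from a Nash equilibrium $(\hat\beta, \hat{\bm\mu})$ with state process $\hat X$, consistency forces $\hat\mu_t = \mathcal{L}^1(\hat X_t)$. Because $\hat\beta$ is optimal for the frozen-measure control problem with $\bm\mu = \hat{\bm\mu}$ and the smoothness hypothesis on $v^{\hat{\bm\mu}}$ is in force, Proposition~\ref{lem:MeantoFBSDE} applies and produces the adjoint BSDE~\eqref{eq:BSDE} with
\[
P_t = \nabla_x v^{\hat{\bm\mu}}(t, \hat X_t, I_t), \qquad Q_t = \nabla_x^2 v^{\hat{\bm\mu}}(t, \hat X_t, I_t)\,\sigma(t, \hat X_t, \hat\mu_t, I_t),
\]
and $\Lambda$ given by the jumps of $\nabla_x v^{\hat{\bm\mu}}$ along the Markov chain. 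The pointwise minimization \eqref{eq:cg} combined with the uniqueness of the Hamiltonian minimizer from Lemma~\ref{Lem:UniOpt2} identifies $\hat\beta_t = \hat\al(t, \hat X_t, \hat\mu_t, P_{t-}, I_{t-})$; plugging this back into the forward equation and using $\hat\mu_t = \mathcal{L}^1(\hat X_t)$ yields exactly \eqref{eq:fbsde}.

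The hard part will be the reduction in the necessity direction from the minimization of the generalized Hamiltonian $\mathcal G$ (which involves the second-order adjoint through $\operatorname{tr}(\sigma^{\prime} M \sigma)$) to that of the reduced Hamiltonian $\mathcal H$ appearing in \eqref{eq:fbsde}. Because $\sigma$ does not depend on $\alpha$, the $\alpha$-partial minimizers of $\mathcal G$ and $\mathcal H$ coincide, so the reduction is valid, but one must carefully appeal to Lemma~\ref{Lem:UniOpt2} to identify $\hat\beta$ with the Lipschitz feedback $\hat\al(t,x,\mu,p,i_0)$. A secondary technical point is verifying the integrability conditions in \eqref{eq:inthatxpql}, for which the boundedness of $\nabla_x g$ (Hypothesis~\ref{hyp:A2}), the finiteness of jump intensities (Hypothesis~\ref{hyp:A5}), Corollary~\ref{cor:sigma-bd-2}, and standard a priori estimates for Lipschitz BSDEs with regime-switching jumps together supply the required $\mathcal S^2_T$ and $\mathcal L^2_T$ bounds on $(X,P,Q,\Lambda)$.
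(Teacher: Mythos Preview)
Your proposal is correct and follows essentially the same approach as the paper: apply Proposition~\ref{Lem:FBSDEtoMean} with $\bm\mu=\mathcal{L}^1(X)$ for sufficiency, and Proposition~\ref{lem:MeantoFBSDE} with $\bm\mu=\mathcal{L}^1(X)$ for necessity. Your write-up is in fact more detailed than the paper's---in particular your observations about the $\mathcal G$ versus $\mathcal H$ reduction (via $\alpha$-independence of $\sigma$) and the integrability verification are points the paper's proof leaves implicit.
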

{
	\begin{proof}
		(Sufficient condition) We apply Proposition~\ref{Lem:FBSDEtoMean} and choose $\mu_t=\mathcal{L}^1(X_{t})$. Then note that $\hat{\al}_t$ is the optimal control of the stochastic control problem \eqref{eq:player-sde2}-\eqref{eq:valuemu} with the specific measure flow $\mathcal{L}^1(X_{t})$. Thus it solves the matching problem in Definition~\ref{def:mfg}.
		
		(Necessary condition) We apply Proposition~\ref{lem:MeantoFBSDE} and choose $\mu_t=\mathcal{L}^1(X_{t})$. Since $X_t$ is the associated diffusion controlled by $\hat{\al}_t$, there exist $(P_t,Q_t,\Lambda_t)$ together with $X_t$ which solve \eqref{eq:fbsde}. Thus we get our desired result.
	\end{proof}
\begin{remark}
	In Proposition~\ref{prop:fbsde-nasheq}, the proof of the sufficient condition does not require the differentiability of the value function $v$, which is an unnatural assumption. Therefore we do not need this assumption to claim solving the FBSDE~\eqref{eq:fbsde} implies getting a Nash equilibrium. This is enough for our intent in the sequel. However, Proposition~\ref{lem:MeantoFBSDE} provides better insight into the behavior of the processes in \eqref{eq:fbsde}.
\end{remark}
}
The following theorem gives the unique solvability of \eqref{eq:fbsde}. 
			\begin{theorem}
				\label{prop:yequ}
				Under Hypothesis~\ref{hyp:Nash2}, the forward-backward system \eqref{eq:fbsde} has a unique solution {in the space $\cs_T^2(\mathbb{R}^d) \times \cs_T^2(\mathbb{R}^d) \times \cl_T^2(\mathbb{R}^{d \times d}) \times \cm_T^2(\mathbb{R}^d)$}. 
				Moreover, for any solution $(X_{t},P_{t},Q_{t},\Lambda_{t})_{0 \leq t \leq T}$ to \eqref{eq:fbsde},
				there exists a function $u : [0,T] \times \mathbb{R}^d \hookrightarrow \mathbb{R}^d$, satisfying the growth and Lipschitz properties
				\begin{equation}
					\label{eq:uproperty}
					\forall t \in [0,T], \quad \forall x,x' \in \mathbb{R}^d, \quad 
					\left\{
					\begin{array}{l}
						| u(t,x,i_0) | \leq c ( 1+ | x |),
						\\
						| u(t,x,i_0) - u(t,x',i_0) | \leq c | x - x' |,
					\end{array}
					\right.
				\end{equation}
				for some constant $c\geq 0$, and such that, $\PP$-a.s., for all 
				$t \in [0,T]$,  $P_{t}=u(t,X_{t},I_{t})$.
			\end{theorem}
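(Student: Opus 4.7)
\textbf{Proof plan for Theorem~\ref{prop:yequ}.}
My plan is to reformulate \eqref{eq:fbsde} into the canonical form of a Markovian McKean--Vlasov FBSDE with regime switching treated in \cite{rolon2024markovian}, verify the structural hypotheses required there, and then extract the decoupling field via a standard Markovian argument. Set
\begin{align*}
B(t,x,\mu,p,i_0) &= b\bigl(t,x,\mu,\hat{\al}(t,x,\mu,p,i_0),i_0\bigr), \quad \Sigma(t,x,\mu,i_0) = \sigma(t,x,\mu,i_0), \\
F(t,x,\mu,p,q,i_0) &= \nabla_x \mathcal{H}\bigl(t,x,\mu,\hat{\al}(t,x,\mu,p,i_0),p,q,i_0\bigr), \quad G(x,\mu,i_0)=\nabla_x g(x,\mu,i_0),
\end{align*}
so that \eqref{eq:fbsde} becomes a McKean--Vlasov forward-backward system with drivers $(B,\Sigma,F,G)$ driven by $W$ and the martingales $M_{i_0 j_0}$. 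I would then invoke the well-posedness theorem in \cite{rolon2024markovian}, which under Lipschitz continuity of the drivers jointly in $(x,\mu,p,q)$ and a suitable monotonicity (weak coupling) condition produces a unique solution in $\cs^2_T(\R^d)\times \cs^2_T(\R^d)\times \cl^2_T(\R^{d\times d}) \times \cm^2_T(\R^d)$.

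The main analytic task is to check the hypotheses of that theorem. Joint Lipschitz continuity of $B$, $\Sigma$, $F$, $G$ in $(x,\mu,p,q)$ follows by combining: (i) the affine form and boundedness of $b_1,b_2$ together with the Lipschitz dependence of $b_0$ on $\mu$ from \ref{hyp:A1}; (ii) the Lipschitz properties of $\nabla_x f$, $\nabla_\al f$, $\nabla_x g$, $\sigma$ and $\nabla_x \sigma$ supplied by \ref{hyp:A3}, \ref{hyp:A6}, \ref{hyp:A8}, \ref{hyp:A10}, \ref{hyp:A12}, and \ref{hyp:gradsigma}; and (iii) the Lipschitz continuity of the minimizer $\hat{\al}(t,x,\mu,p,i_0)$ with constant $\max\{c_L,c_\al\}$ provided by Lemma~\ref{Lem:UniOpt2}. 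For the monotonicity condition, using the envelope identity $\nabla_x \mathcal{H} = \nabla_x f + (\nabla_x b)^\prime p + \nabla_x \operatorname{tr}(q^\prime \sigma)$ and the optimality of $\hat{\al}$, the inner product
\[
\langle F(t,x_1,\mu,p_1,q_1,i_0)-F(t,x_2,\mu,p_2,q_2,i_0),\, x_1-x_2\rangle + \langle B(t,x_1,\mu,p_1,i_0)-B(t,x_2,\mu,p_2,i_0),\, p_1-p_2\rangle
\]
(together with the corresponding $\sigma$ trace term) is bounded below by precisely the right-hand side appearing in \ref{hyp:A14} (respectively \ref{hyp:A15} when $\sigma$ is $\mu$-independent). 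The quantitative smallness assumed for $c_\theta\max\{c_L,c_\al\}$, $c_\nu$, $c_\mu$ relative to $(\sqrt 3-1)K_h$ and $K_\Psi/\sqrt 3$ is exactly the coupling-smallness threshold of \cite{rolon2024markovian} ensuring that the McKean--Vlasov perturbation can be absorbed. Hence the hypotheses of their existence and uniqueness result are met and a unique solution in the required spaces is obtained.

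The step that I expect to require the most care, and is most specific to the present regime-switching setting, is producing the decoupling function $u$ and verifying its growth and Lipschitz bounds. The plan is standard in spirit. For each $(t_0,x_0,i_0)\in[0,T]\times \R^d \times \cs$, solve the FBSDE started at $t_0$ with initial data $(x_0,i_0)$; well-posedness holds uniformly over initial data since none of the estimates above depend on them. Define $u(t_0,x_0,i_0):=P_{t_0}$. Because the solution is adapted to the filtration generated by $W$ and $I$ beyond $t_0$ while the initial data is deterministic, $u(t_0,x_0,i_0)$ is a deterministic function of its arguments. The Lipschitz estimate $|u(t_0,x_0,i_0)-u(t_0,x_0',i_0)|\le c|x_0-x_0'|$ and the linear growth bound in \eqref{eq:uproperty} follow from the $L^2$-stability estimates produced by the monotonicity argument in \cite{rolon2024markovian} applied to pairs of initial conditions, uniformly in $(t_0,i_0)$ thanks to finiteness of $\cs$ and boundedness of the rates $q_{i_0 j_0}$ via \ref{hyp:A5}. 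Finally, the identity $P_t=u(t,X_t,I_t)$ $\PP$-a.s.\ along the solution follows from the flow property: the shifted process $(X_s,P_s,Q_s,\Lambda_s)_{t\le s\le T}$ is, by uniqueness, the solution of the FBSDE restarted from $(t,X_t,I_t)$, whose $P$-component at time $t$ equals $u(t,X_t,I_t)$.
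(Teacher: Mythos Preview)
Your proposal is correct and follows essentially the same route as the paper: reformulate \eqref{eq:fbsde} so that the well-posedness theorems of \cite{rolon2024markovian} apply (the paper uses the first-order condition $p\cdot b_2=-\nabla_\alpha f$ at $\hat\alpha$ for this translation, which is the same content as your envelope/optimality remark), and then obtain the decoupling field $u$ from stability of $P_{t_0}^{t_0,x_0}$ in the initial point. The only difference in emphasis is that the paper does not simply cite stability from \cite{rolon2024markovian} for the Lipschitz bound \eqref{eq:uproperty}; it carries out the estimate explicitly, bounding $\E\sup_t|X^{t_0,x_0}_t-X^{t_0,x_0'}_t|^2$ via It\^o, Burkholder--Davis--Gundy and Gronwall, feeding this into \cite[Lem.~2.1]{rolon2024markovian} to control $|P^{t_0,x_0}_{t_0}-P^{t_0,x_0'}_{t_0}|^2$, and closing with a second Gronwall, so you should be prepared to supply these few lines rather than treat them as off-the-shelf.
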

	\begin{proof}
	Since \(\hat{\alpha} = \hat{\alpha}(t, x, \mu, p, i_0)\) is the optimal point of the Hamiltonian \(\mathcal{H}(t, x, \mu, \alpha, p, q, i_0)\) defined in \eqref{eq:Ham}, it must satisfy the first-order condition:
	\[
	p \cdot b_2(t, i_0) = -\nabla_\alpha f(t, x, \mu, \alpha, i_0).
	\]
	Combining this relation with Hypothesis~\ref{hyp:Nash2}, we can apply \cite[Theorem 3.1, Theorem 3.2]{rolon2024markovian} to conclude that for any initial state \(x_0 \in \mathbb{R}^d\) and \(t_0 \in [0, T]\), there exists a unique solution to \eqref{eq:fbsde} on the interval \([t_0, T]\). Denote this solution by \((X_t^{t_0, x_0}, P_t^{t_0, x_0}, Q_t^{t_0, x_0}, \Lambda_t^{t_0, x_0})_{t_0 \leq t \leq T}\). To proceed, it suffices to show that the following Lipschitz condition holds:
	\begin{align}\label{eq:Yt0Lip}
		\forall x_0, x_0^{\prime} \in \mathbb{R}^d, \quad \left|P_{t_0}^{t_0, x_0} - P_{t_0}^{t_0, x_0^{\prime}}\right|^2 \leq c \left|x_0 - x_0^{\prime}\right|^2,
	\end{align}
	for some constant \(c\) independent of \(t_0\). Indeed, using It\^o’s formula we have
	\begin{multline*}
		\E\sup_{t_0 \leq t \leq T}\left|X^{t_0,x_0}_{t} - X^{t_0,x_0^\prime}_{t}\right|^2\leq|x_0-x_0^\prime|^2+2\E\int_{t_0}^T\left|X^{t_0,x_0}_{s} - X^{t_0,x_0^\prime}_{s}\right|(b^{t_0,x_0}_{s} -b^{t_0,x_0^\prime}_{s})ds\\
		+ \E\int_{t_0}^T\left|\sigma^{t_0,x_0}_{s} - \sigma^{t_0,x_0^\prime}_{s}\right|^2ds+2\E\sup_{t_0 \leq t \leq T}\left|\int_{t_0}^t\langle X^{t_0,x_0}_{s} - X^{t_0,x_0^\prime}_{s},(\sigma^{t_0,x_0}_{s} - \sigma^{t_0,x_0^\prime}_{s}) dW_s\rangle\right|,
	\end{multline*}
	where we denote
	\begin{equation*}
		b^{t_0,x_0}_{s}=b(s,X_s^{t_0,x_0},\mathcal{L}^1(X_{s}^{t_0,x_0}),\hat{\al}(s, X_s^{t_0,x_0},\mathcal{L}^1(X_{s}^{t_0,x_0}), P_{s-}^{t_0,x_0},I_{s-})  ,I_{s-}).
	\end{equation*}
	A similar definition holds for $\sigma_s^{t_0,x_0}$. By Hypothesis~\ref{hyp:A1}, \ref{hyp:A6}, \ref{hyp:gradsigma} and Lemma~\ref{Lem:UniOpt2} we know that $b$ and $\sigma$ are Lipschitz in $x$, $\mu$ and $p$. 
	 Consequently, applying Burkholder–Davis–Gundy inequality we obtain
	\begin{align*}
		&\E\sup_{t_0 \leq t \leq T}\left|X^{t_0,x_0}_{t} - X^{t_0,x_0^\prime}_{t}\right|^2\leq |x_0-x_0^\prime|^2+c\E\int_{t_0}^T\left|X^{t_0,x_0}_{s} - X^{t_0,x_0^\prime}_{s}\right|\left(\left|X^{t_0,x_0}_{s} - X^{t_0,x_0^\prime}_{s}\right|\right.\\
		&\left.+W_2\left(\cl^1(X^{t_0,x_0}_{s}),\cl^1( X^{t_0,x_0^\prime}_{s})\right)+\left|P^{t_0,x_0}_{s} - P^{t_0,x_0^\prime}_{s}\right|\right)ds\\
		&+c\E\int_{t_0}^T\left(\left|X^{t_0,x_0}_{s} - X^{t_0,x_0^\prime}_{s}\right|^2+W_2^2\left(\cl^1(X^{t_0,x_0}_{s}),\cl^1( X^{t_0,x_0^\prime}_{s})\right)\right)ds\\
		&+c\E\left(\int_{t_0}^T\left|X^{t_0,x_0}_{s} - X^{t_0,x_0^\prime}_{s}\right|^2\left(\left|X^{t_0,x_0}_{s} - X^{t_0,x_0^\prime}_{s}\right|^2+W_2^2\left(\cl^1(X^{t_0,x_0}_{s}),\cl^1( X^{t_0,x_0^\prime}_{s})\right)\right)ds\right)^{1/2},
	\end{align*}
	Here and in the following $c>0$ is a generic constant that may vary from line to line. Therefore, by using the fact that $W_2(\cl^1(X^{t_0,x_0}_{s}),\cl^1( X^{t_0,x_0^\prime}_{s}))\leq(\E(|X^{t_0,x_0}_{s}- X^{t_0,x_0^\prime}_{s}|^2\mid \cf_s^I))^{1/2}$ and Cauchy-Schwarz inequality, we deduce that
	\begin{equation*}
		\E\sup_{t_0 \leq t \leq T}\left|X^{t_0,x_0}_{t} - X^{t_0,x_0^\prime}_{t}\right|^2\leq |x_0-x_0^\prime|^2+c\E\int_{t_0}^T\left|X^{t_0,x_0}_{s} - X^{t_0,x_0^\prime}_{s}\right|^2ds+c\E\int_{t_0}^T\left|P^{t_0,x_0}_{s} - P^{t_0,x_0^\prime}_{s}\right|^2ds.
	\end{equation*}
	By Gronwall's inequality we have 
	\begin{equation}\label{eq:EXtoxo-Xtoxo'}
		\E\sup_{t_0 \leq t \leq T}\left|X^{t_0,x_0}_{t} - X^{t_0,x_0^\prime}_{t}\right|^2\leq |x_0-x_0^\prime|^2+c\E\int_{t_0}^T\left|P^{t_0,x_0}_{s} - P^{t_0,x_0^\prime}_{s}\right|^2ds.
	\end{equation}
	Thanks to Hypothesis~\ref{hyp:A12} and Lemma~\ref{Lem:UniOpt2}, we can apply \cite[Lem 2.1]{rolon2024markovian} to obtain
	\begin{align*}
		\left|P_{t_0}^{t_0, x_0} - P_{t_0}^{t_0, x_0^{\prime}}\right|^2 &\leq c\left(|x_0-x_0^\prime|^2+\E\int_{t_0}^T\left|X^{t_0,x_0}_{s} - X^{t_0,x_0^\prime}_{s}\right|^2+W_2^2\left(\cl^1(X^{t_0,x_0}_{s}),\cl^1( X^{t_0,x_0^\prime}_{s})\right)ds\right)\\
		&\leq c\left(|x_0-x_0^\prime|^2+(T-t_0)\E\sup_{t_0 \leq t \leq T}\left|X^{t_0,x_0}_{t} - X^{t_0,x_0^\prime}_{t}\right|^2\right).
	\end{align*}
	Substituting \eqref{eq:EXtoxo-Xtoxo'} into the above inequality we obtain
	\begin{equation*}
		\left|P_{t_0}^{t_0, x_0} - P_{t_0}^{t_0, x_0^{\prime}}\right|^2\leq c\left( |x_0-x_0^\prime|^2+\E\int_{t_0}^T\left|P^{t_0,x_0}_{s} - P^{t_0,x_0^\prime}_{s}\right|^2ds \right).
	\end{equation*}
	Finally, applying Gronwall's inequality we get the desired result \eqref{eq:Yt0Lip}.  
		\end{proof}
	\begin{remark}
		It is readily checked that the Linear-Quadratic problem is encompassed within our framework and satisfies Hypothesis~\ref{hyp:Nash2}. For explicit examples, we refer to \cite[Remark 3.3]{carmona-delarue-siam} and \cite[Sec 4]{rolon2024markovian}.
		\end{remark}	
		
\section{Propagation of Chaos and Approximate Nash Equilibrium}\label{sec:prop}
In this section, we obtain propagation of chaos results connecting the $N$-player and mean field regime switching games. Similar results have been obtained by \cite{carmona-delarue-siam, ref2} in the setup without common noise and in \cite{ ref3} considering Brownian common noise and in \cite{song-lqg} for regime switching common noise in linear quadratic framework.
First, we introduce a preliminary result from \cite[Lemma 6.1]{ref3}.
\begin{lemma}\label{lem:propchaos}
	If \( \mu \in \mathcal{P}_{q}\left(\mathbb{R}^{d}\right) \) for some \( q>4 \), there exists a constant \( C \) depending only upon \( d, q \) and \( M_{q}(\mu) \) such that:
	\[
	\mathbb{E}\left[W_{2}^{2}\left(\bar{\mu}^{N}, \mu\right)\right] \leq C \epsilon^2_N
	\]
	where \( \bar{\mu}^{N} \) denotes the empirical measure of any sample of size \( N \) from \( \mu \) and 
	\begin{equation}\label{eq:epN}
		\epsilon_N=N^{-1 / \max (d, 4)}\left(1+\log (N) \mathbf{1}_{\{d=4\}}\right)^{1/2}.
	\end{equation}
\end{lemma}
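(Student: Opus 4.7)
The plan is to follow the dyadic partition argument of Fournier–Guillin, which is the standard route to convergence rates of empirical measures in $W_2$. The idea is to trade off (a) concentration of $\bar{\mu}^N$ on cubes at a given scale against (b) the quantization error incurred by replacing a measure by its coarsening onto the cube centers, and then to optimize the scale.

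First I would handle the tails using the moment assumption. Since $\mu \in \cp_q(\R^d)$ for $q>4$, by Markov's inequality both $\mu(B_R^c)$ and $\E[\bar{\mu}^N(B_R^c)]$ decay like $M_q(\mu)^q R^{-q}$, and the $W_2^2$-contribution from mass outside $B_R$ can be estimated by the $q$-th moment restricted to $B_R^c$, giving a term of order $R^{2-q}$. On $B_R$ I would introduce a nested sequence of dyadic cube partitions $\{C_{k,j}\}$, where $C_{k,j}$ has side $2^{-k}R$, for $k=0,\dots,K$, with a parameter $K$ to be chosen. Writing $\nu_k$ for the projection of a measure $\nu$ onto the centers of scale-$k$ cubes, a telescoping coupling yields the deterministic bound
\begin{equation*}
W_2^2(\bar{\mu}^N,\mu) \;\lesssim\; R^2 2^{-2K} + \sum_{k=0}^{K} R^2 2^{-2k} \sum_{j} \bigl|\bar{\mu}^N(C_{k,j}) - \mu(C_{k,j})\bigr|,
\end{equation*}
plus the tail contribution mentioned above. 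The first term is the quantization error at the finest scale; the sum records the discrepancies between $\bar{\mu}^N$ and $\mu$ on each cube, weighted by the squared diameter.

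Next I would take expectations. Since $N\bar{\mu}^N(C_{k,j})$ is $\mathrm{Binomial}(N,\mu(C_{k,j}))$, one has $\E|\bar{\mu}^N(C_{k,j})-\mu(C_{k,j})| \leq \sqrt{\mu(C_{k,j})/N}$, and Cauchy--Schwarz over the at most $c\,2^{kd}$ cubes at scale $k$ gives
\begin{equation*}
\sum_{j}\E\bigl|\bar{\mu}^N(C_{k,j}) - \mu(C_{k,j})\bigr| \;\leq\; N^{-1/2}\,c\,2^{kd/2}.
\end{equation*}
Plugging back yields $\E W_2^2(\bar{\mu}^N,\mu) \lesssim R^2 2^{-2K} + R^2 N^{-1/2} \sum_{k=0}^{K} 2^{k(d/2-2)} + R^{2-q}$.

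The main obstacle is the scale optimization, since the behavior of the geometric sum $\sum 2^{k(d/2-2)}$ switches between three regimes: for $d<4$ it converges and yields the parametric rate $N^{-1/2}$, for $d>4$ it is dominated by the largest scale, and for $d=4$ it contributes a logarithmic factor. Choosing $K$ so that $2^{-2K}\sim N^{-2/\max(d,4)}$ balances quantization and concentration, and choosing $R$ as a slowly growing function of $N$ to kill the tail term with room to spare (possible because $q>4>2$), one obtains $\E W_2^2(\bar{\mu}^N,\mu) \leq C\,\ep_N^2$ with $\ep_N$ defined in \eqref{eq:epN}, and the constant depending only on $d$, $q$ and $M_q(\mu)$. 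The delicate point is keeping track of the constants through the tail truncation so that no additional $\log$ factors enter for $d\neq 4$; this is handled by choosing $R$ as a small power of $N$ and using that $q>4$ gives strictly more tail decay than what the balance requires.
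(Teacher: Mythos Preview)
Your sketch is the standard Fournier--Guillin dyadic partition argument and is essentially correct; the three regimes in $d$ and the balancing of $K$ and $R$ are handled appropriately. Note, however, that the paper does not actually prove this lemma: it is quoted verbatim as a preliminary result from \cite[Lemma~6.1]{ref3}, with no argument given. So there is nothing to compare on the paper's side beyond the citation; your write-up supplies precisely the proof that the cited reference (and ultimately Fournier--Guillin) provides.
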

Throughout this section, Hypothesis~\ref{hyp:Nash2} remains in force. Let $(X_{t}, Y_{t}, Z_{t}, \Lambda_{t})_{0 \leq t \leq T}$ denote a solution to \eqref{eq:fbsde}, and let $u$ be as defined in Theorem~\ref{prop:yequ}. Recall that $(\mathcal{L}(X_t))_{0 \leq t \leq T}$ denote the flow of the conditional law of $X_t$ given the filtration $\mathcal{F}_t^I$, for $0 \leq t \leq T$. Additionally, let $\hat{J}$ represent the optimal cost of the limiting mean-field problem:
\begin{equation}\label{eq:Jhat}
	\hat{J} = \mathbb{E}^{x_0,i_0}\biggl[ g(X_{T},\mathcal{L}^1(X_{T}),I_T) + \int_{0}^T f\bigl(t,X_{t},\mathcal{L}^1(X_{t}),\hat{\alpha}(t,X_{t},\mathcal{L}^1(X_{t}),P_{t},I_{t-}),I_{t-}\bigr) dt \biggr],
\end{equation}
where $\hat{\alpha}$ is the unique minimizer of the Hamiltonian \eqref{eq:Ham}. Our goal is to demonstrate that using the strategy $\hat{\alpha}$, derived from the limiting problem, in the $N$-player setting yields an approximate Nash equilibrium. To formalize this, we fix a sequence $((W_{t}^i)_{0 \leq t \leq T})_{i \geq 1}$ of independent $d$-dimensional Brownian motions and put the filtration $\mathcal{F}_t=\mathcal{F}_t^{W, I}=\sigma\{W_s^i, I_s: 0 \leq s \leq t,i=1,\cdots,N\}$. For each integer $N$, consider the $N$-player problem where the dynamics of the players are given by the solution $(X_{t}^1, \dots, X_{t}^N)_{0 \leq t \leq T}$ to the system of $N$ stochastic differential equations:
\begin{align}
	\label{eq:Nplayersdeal}
	&dX_{t}^i = b\bigl(t,X_{t}^i,\mu^N_{t},\hat{\alpha}\bigl(t,X_{t}^i,\mathcal{L}^1(X_{t}),u(t,X_{t}^i,I_{t-}),I_{t-}\bigr),I_{t-} \bigr) dt + \sigma(t,X_t^i,\mu^N_{t},I_{t-}) dW_t^i, \nonumber\\
	&\mu^N_{t} = \frac{1}{N} \sum_{j=1}^N \delta_{X_{t}^j},
\end{align}
for $t \in [0,T]$, $X_{0}^i = x_{0}$, and where the players employ the strategy functions obtained from the limiting problem:
\begin{equation}
	\label{eq:alphaNi}
	\hat{\alpha}_{t}^{N,i,I} = \hat{\alpha}(t,X_{t}^i,\mathcal{L}^1(X_{t}),u(t,X_{t}^i,I_{t-}),I_{t-}), \qquad 0 \leq t \leq T, \;\; i \in \{1, \dots, N\}.
\end{equation}
	By It\^o's formula we have
	\begin{equation}\label{eq:Xiito}
		|X_t^i|^2=\int_0^T2X_s^i\cdot b(t,X_s^i,\mu_s^N,\hat{\al}_{s}^{N,i,I},I_{s-})ds+\int_0^T2\langle X_s^i,\sigma_sdW_s\rangle+\int_0^T|\sigma_s|^2ds.
	\end{equation}
	Observe that $u$ is Lipschitz continuous and has linear growth in $x$ by \eqref{eq:uproperty} and $\hat{\alpha}(t,x,\mu,p,i_0)$ is Lipschitz continuous and at most of linear growth in $p$, uniformly in $t \in [0,T]$ by Lemma \ref{Lem:UniOpt2}. Therefore $\hat{\al}$ has linear growth in $x$. Consequently, by Hypothesis~\ref{hyp:A1}, Corollary~\ref{cor:sigma-bd-2} and applying Burkholder-Davis-Gundy inequality on the martingale part of \eqref{eq:Xiito}, we have
	\begin{align}\label{eq:XGron}
		\mathbb{E} \bigl[ \sup_{0 \leq t \leq T} | X_{t}^i |^2 \bigr]\leq \mathbb{E} c\left(1+\int_0^T \left|X_t^i\right|^2dt\right).
	\end{align}
	By Gronwall's inequality
	\begin{equation}\label{eq:Xbound}
		\sup_{N \geq 1} \max_{1 \leq i \leq N}  \mathbb{E} \bigl[ \sup_{0 \leq t \leq T} | X_{t}^i |^2 \bigr] < + \infty,
	\end{equation}
	and thus
\begin{equation}\label{eq:albound}
	\sup_{N \geq 1} \max_{1 \leq i \leq N}\mathbb{E} \int_{0}^T 
	| \hat{\al}_{t}^{N,i,I} |^2 dt  < + \infty.
\end{equation}
This also gives us the well-posedness of \eqref{eq:Nplayersdeal}. 
Next, consider independent and identically distributed (i.i.d.) processes $(\bar{X}^i)_{i \in \{1, \dots, N\}}$ defined by:
	\begin{equation}\label{eq:barX}
		d\bar{X}_{t}^i = b\bigl(t, \bar{X}_{t}^i, \mathcal{L}^1(X_{t}), \hat{\alpha}(t, \bar{X}_{t}^i, \mathcal{L}^1(X_{t}), u(t, \bar{X}^i_t, I_{t-}), I_{t-} \bigr) dt + \sigma(t, \bar{X}_t^i, \mathcal{L}^1(X_{t}), I_{t-}) dW_t^i,
	\end{equation}
	for $t \in [0, T]$, $\bar{X}_0^i =x_0$. These processes $\bar{X}^i$ are i.i.d. copies of $X$, with $\mathbb{P}_{\bar{X}_{t}^i} = \mathcal{L}^1(X_{t})$ for all $t \in [0, T]$ and $i \in \{1, \dots, N\}$.

We now introduce a lemma from \cite[Theorem 4.1]{shao2024conditional} (see also \cite[Theorem 3.1]{shao2022propagation}).
\begin{lemma}\label{lem:propchaoscondition}
	Assume that Hypothesis~\ref{hyp:Nash2} hold. For $1 \leq i \leq N$, let $X^i_t$ and $\bar{X}^i_t$ be defined by \eqref{eq:Nplayersdeal} and \eqref{eq:barX}, respectively. Then for $T>0$ there exists a constant $\tilde{C}>0$ depending on $T, d$, $q$ such that
	\begin{equation*}
		\max _{1 \leq k \leq N} \mathbb{E}\left[\sup _{t \in[0, T]}\left|X_t^i-\bar{X}^i_t\right|^2\right] \leq  \tilde{C} \epsilon^2_N,
	\end{equation*}
	and
	\begin{equation*}
		\sup _{t \in[0, T]} \mathbb{E}\left[W_2^2\left(\frac{1}{N} \sum_{k=1}^N \delta_{X_t^i}, \mathcal{L}(X_t \mid \mathcal{F}^I_t)\right)\right] \leq \tilde{C} \epsilon^2_N,
	\end{equation*}
	where $\epsilon_N$ is defined in \eqref{eq:epN}.
\end{lemma}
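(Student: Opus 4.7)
The plan is to establish both estimates via a coupling argument, pairing each $N$-player state $X_t^i$ defined by \eqref{eq:Nplayersdeal} with the corresponding i.i.d.\ copy $\bar{X}_t^i$ defined by \eqref{eq:barX}, driven by the same Brownian motion $W^i$ and the same Markov chain $I$. First I would apply It\^o's formula to $|X_t^i - \bar{X}_t^i|^2$. Using the Lipschitz properties in Hypothesis~\ref{hyp:Nash2} for $b$ and $\sigma$ in $(x,\mu)$, together with the Lipschitz continuity of $\hat{\alpha}$ in $(x,\mu,p)$ from Lemma~\ref{Lem:UniOpt2} and of $u$ in $x$ from Theorem~\ref{prop:yequ}, the drift and diffusion differences can be controlled pointwise by
\begin{equation*}
C\bigl(|X_s^i - \bar{X}_s^i|^2 + W_2^2(\mu_s^N, \mathcal{L}^1(X_s))\bigr),
\end{equation*}
where the feedback strategy \eqref{eq:alphaNi} is Lipschitz in $x$ thanks to the composition with $u$. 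The Burkholder--Davis--Gundy inequality applied to the martingale part, combined with $\mathbb{E}\sup_{t \leq T}(\cdot)$ on the left, brings us to an integral inequality in the quantity $\mathbb{E}\sup_{s \leq t}|X_s^i - \bar{X}_s^i|^2$.

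Next I would split the Wasserstein term via the triangle inequality, introducing the empirical measure $\bar{\mu}_s^N := \frac{1}{N}\sum_{j=1}^N \delta_{\bar{X}_s^j}$ of the i.i.d.\ copies:
\begin{equation*}
W_2^2(\mu_s^N, \mathcal{L}^1(X_s)) \leq 2 W_2^2(\mu_s^N, \bar{\mu}_s^N) + 2 W_2^2(\bar{\mu}_s^N, \mathcal{L}^1(X_s)).
\end{equation*}
The first term is bounded by the obvious coupling $\frac{1}{N}\sum_{j=1}^N |X_s^j - \bar{X}_s^j|^2$. Taking expectation and exploiting the symmetric joint distribution of the pairs $(X^j, \bar{X}^j)$ yields $\mathbb{E} W_2^2(\mu_s^N, \bar{\mu}_s^N) \leq \max_{j}\mathbb{E}|X_s^j - \bar{X}_s^j|^2$, which feeds back on the right-hand side and will be absorbed by Gronwall.

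The main obstacle is the second term, the Wasserstein distance between the empirical measure of samples that share the common chain $I$ and the conditional law $\mathcal{L}^1(X_s)$ itself; Lemma~\ref{lem:propchaos} cannot be invoked directly because the $\bar{X}^i$ are only \emph{conditionally} independent given $\mathcal{F}_T^I$. The key step is to apply the Fournier--Guillin estimate conditionally: given the full trajectory of $I$, the $\bar{X}^i$ are i.i.d.\ with common (random) law $\mathcal{L}^1(X_s)$, whence
\begin{equation*}
\mathbb{E}\bigl[W_2^2(\bar{\mu}_s^N, \mathcal{L}^1(X_s)) \bigm| \mathcal{F}_T^I\bigr] \leq C \epsilon_N^2 \bigl(1 + M_q(\mathcal{L}^1(X_s))^{2}\bigr).
\end{equation*}
A standard a priori moment estimate on the forward SDE in \eqref{eq:fbsde} (using the linear growth inherited from Hypothesis~\ref{hyp:Nash2} and Lemma~\ref{Lem:UniOpt2}) yields $\mathbb{E}[M_q(\mathcal{L}^1(X_s))^{2}] \leq C$ for any $q > 4$, so that taking unconditional expectation gives $\mathbb{E} W_2^2(\bar{\mu}_s^N, \mathcal{L}^1(X_s)) \leq \tilde C \epsilon_N^2$.

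Inserting the two Wasserstein bounds into the It\^o estimate and applying Gronwall's inequality produces the first claim $\max_{i} \mathbb{E}[\sup_{t \leq T}|X_t^i - \bar{X}_t^i|^2] \leq \tilde{C}\epsilon_N^2$. The second claim follows immediately by applying the triangle inequality once more to $W_2(\mu_t^N, \mathcal{L}^1(X_t))$, the coupling bound for $W_2(\mu_t^N, \bar{\mu}_t^N)$ now being controlled by the first claim, and the conditional Fournier--Guillin estimate handling $W_2(\bar{\mu}_t^N, \mathcal{L}^1(X_t))$.
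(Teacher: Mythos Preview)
Your proposal is correct and follows the standard coupling-plus-conditional-Fournier--Guillin route for propagation of chaos with common noise. The paper, however, does not prove this lemma at all: it is stated as an imported result from \cite[Theorem 4.1]{shao2024conditional} (see also \cite[Theorem 3.1]{shao2022propagation}), with no argument given beyond the citation. What you have written is essentially the strategy of those references adapted to the present regime-switching setting, so there is no meaningful methodological difference to compare---you have simply supplied what the paper outsources. One minor point worth tightening: when you invoke the conditional Fournier--Guillin bound, the constant in Lemma~\ref{lem:propchaos} depends on $M_q(\mu)$ in a way that is not made explicit there, so to take unconditional expectations you should either track the exact polynomial dependence (e.g.\ $M_q(\mu)^{4/q}$ as in the original Fournier--Guillin theorem) or argue that $\sup_{s\le T}\mathbb{E}[M_q(\mathcal{L}^1(X_s))^r]<\infty$ for the relevant power $r$; this follows from the deterministic initial condition and the linear-growth coefficients, but it is worth stating cleanly.
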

	For the purpose of comparison, we now consider a general setting where the players adopt an arbitrary set of strategies $((\beta_{t}^i)_{0 \leq t \leq T})_{1 \leq i \leq N}$. The private state $U^i$ of player $i \in \{1, \dots, N\}$ evolves according to the following stochastic differential equation:
	\begin{equation}
		\label{eq:Uisde}
		dU_{t}^i = b \bigl(t, U_{t}^i, \bar{\nu}_t^{N}, \beta_t^i, I_{t-} \bigr) dt + \sigma(t, U_t^i, \bar{\nu}_t^{N}, I_{t-}) dW_t^i, \qquad 
		\bar{\nu}^N_{t} = \frac{1}{N} \sum_{j=1}^N \delta_{U_{t}^j},
	\end{equation}
	for $t \in [0, T]$ and initial condition $U_{0}^i = x_{0}$. Here, $\bar{\nu}^N_{t}$ represents the empirical distribution of the players' states at time $t$, and $((\beta_{t}^i)_{0 \leq t \leq T})_{1 \leq i \leq N}$ are progressively measurable with respect to $(\mathcal{F}_t)_{0 \leq t \leq T}$ and satisfy $\mathbb{E} \int_{0}^T | \beta_{t}^i |^2 dt < + \infty$. For each player $i \in \{1, \dots, N\}$, the corresponding cost functional is defined as:
	\begin{equation}\label{eq:Jbar}
		\bar{J}^{N,i,I}(\beta^1,\dots,\beta^N) =  {\mathbb E}^{x_0,i_0}
		\biggl[ g\bigl(U_{T}^i,\bar{\nu}_{T}^N,I_T
		\bigr) + \int_{0}^T f(t,U_{t}^i,\bar{\nu}_{t}^N,\beta_{t}^i,I_{t-}) dt  \biggr],
	\end{equation}
	the cost to the $i$-th player.
We are now ready to state the result and follow the standard proof in  \cite{carmona-delarue-siam, bensoussan2016linear, cardaliaguet2010notes}. For simplicity all the expectation in the following are condition on $X_0=x_0, I_0=i_0$.
\begin{theorem}
	\label{th:apNash}
	Under Hypothesis~\ref{hyp:Nash2}, the strategies $(\hat{\al}_{t}^{N,i,I})_{0\le t\le T,\;1 \leq i \leq N}$ defined in \eqref{eq:alphaNi} form an approximate Nash equilibrium of the $N$-player game (\ref{eq:player-sde}--\ref{eq:cost}).  More precisely, there exists a constant $c>0$ such that, for each $N \geq 1$, any player $i\in\{1,\cdots,N\}$ and any progressively measurable strategy 
	$\beta^i=(\beta^i_t)_{0\le t\le T}$ satisfies $\mathbb{E} \int_{0}^T | \beta_{t}^i |^2 dt < + \infty$, one has
		\begin{equation}\label{eq:apNash}
		\bar{J}^{N,i,I}(\hat{\al}_{t}^{N,1,I},\dots,\hat{\al}_{t}^{N,i-1,I},\beta^i,\hat{\al}_{t}^{N,i+1,I},\dots,\hat{\al}_{t}^{N,N,I}) \ge \bar{J}^{N,i,I}(\hat{\al}_{t}^{N,1,I},\cdots,\hat{\al}_{t}^{N,N,I})-c\epsilon_N,
	\end{equation}
	where $\epsilon_N$ is defined in \eqref{eq:epN}.
\end{theorem}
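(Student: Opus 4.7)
By the symmetry of the problem, it suffices to treat the case $i=1$. The plan is to sandwich the $N$-player costs between the limiting optimal cost $\hat{J}$ in \eqref{eq:Jhat} with errors of order $\epsilon_N$. The first step is to compare the all-MFG-strategy cost with $\hat{J}$: one applies Lemma~\ref{lem:propchaoscondition} to the system $(X^i)$ in \eqref{eq:Nplayersdeal} and the i.i.d.\ copies $(\bar{X}^i)$ in \eqref{eq:barX} to get $\max_i \mathbb{E}[\sup_t |X_t^i - \bar{X}_t^i|^2] \le \tilde C \epsilon_N^2$ and $\sup_t \mathbb{E}[W_2^2(\mu_t^N,\mathcal{L}^1(X_t))] \le \tilde C \epsilon_N^2$. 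Using Hypothesis~\ref{hyp:A10} on $f,g$, the Lipschitz property of $u$ from \eqref{eq:uproperty}, the Lipschitz property of $\hat{\alpha}$ from Lemma~\ref{Lem:UniOpt2}, together with the moment bound \eqref{eq:Xbound} and $\mathcal{L}^1(\bar X^i_t)=\mathcal{L}^1(X_t)$, Cauchy--Schwarz gives
\[
\bigl| \bar{J}^{N,1,I}(\hat{\alpha}^{N,1,I},\dots,\hat{\alpha}^{N,N,I}) - \hat{J} \bigr| \le c_1 \epsilon_N.
\]

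Next, fix an admissible deviation $\beta^1$ for player $1$. A standard a priori reduction: if $\bar{J}^{N,1,I}(\beta^1,\hat{\alpha}^{N,2,I},\dots) \ge \bar{J}^{N,1,I}(\hat{\alpha}^{N,1,I},\dots)$ there is nothing to prove, so we may assume the converse. By the quadratic growth of $f$ in $\alpha$ (Hypothesis~\ref{hyp:A10}) this bounds $\mathbb{E}\int_0^T |\beta_t^1|^2\,dt$ by some constant $K$ depending only on the data. Let $(U^i)_{i=1}^N$ solve \eqref{eq:Uisde} with $\beta^i=\hat{\alpha}(t,U_t^i,\mathcal{L}^1(X_t),u(t,U_t^i,I_{t-}),I_{t-})$ for $i\ne 1$, and compare against the decoupled ``target'' system where $\tilde U^1$ solves the SDE driven by $\beta^1$ with the MFG law $\mathcal{L}^1(X_t)$ plugged into the coefficients, and $\tilde U^i = \bar X^i$ for $i \ne 1$. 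A Gronwall estimate using Hypotheses~\ref{hyp:A1}, \ref{hyp:A6}, \ref{hyp:gradsigma}, the Lipschitz property of $\hat{\alpha}\circ u$, and the fact that a single agent contributes $O(1/N)$ to the empirical measure, yields
\[
\max_{1\le i\le N} \mathbb{E}\bigl[\sup_{0\le t\le T}|U_t^i - \tilde U_t^i|^2\bigr] \le c_2 \bigl(\epsilon_N^2 + N^{-1}(1+K)\bigr),
\]
and $\sup_t \mathbb{E}[W_2^2(\bar\nu_t^N,\mathcal{L}^1(X_t))] \le c_2(\epsilon_N^2 + N^{-1}(1+K))$ by Lemma~\ref{lem:propchaos} applied to the i.i.d.\ sample $(\tilde U^i)_{i\ne 1}$ together with the coupling bound above. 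Feeding these into the cost difference via Hypothesis~\ref{hyp:A10} gives
\[
\bigl| \bar{J}^{N,1,I}(\beta^1,\hat{\alpha}^{N,2,I},\dots,\hat{\alpha}^{N,N,I}) - J(0,x_0,\mathcal{L}^1(X),\beta^1,i_0)\bigr| \le c_3 \epsilon_N,
\]
where $J(0,x_0,\mathcal{L}^1(X),\beta^1,i_0)$ is the cost along the trajectory driven by $\beta^1$ with frozen mean field flow $\mathcal{L}^1(X)$.

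Finally, the Nash property of the limit: since $(X_t,P_t,Q_t,\Lambda_t)$ solves \eqref{eq:fbsde} with $\hat{\alpha}_t=\hat{\alpha}(t,X_t,\mathcal{L}^1(X_t),P_{t-},I_{t-})$, Proposition~\ref{Lem:FBSDEtoMean} applied with the frozen flow $\boldsymbol{\mu}=\mathcal{L}^1(X)$ gives $\hat{J}\le J(0,x_0,\mathcal{L}^1(X),\beta^1,i_0)$ for every admissible $\beta^1$. Chaining the three estimates,
\[
\bar{J}^{N,1,I}(\beta^1,\hat{\alpha}^{N,2,I},\dots)\;\ge\; J(0,x_0,\mathcal{L}^1(X),\beta^1,i_0)-c_3\epsilon_N \;\ge\; \hat{J}-c_3\epsilon_N \;\ge\; \bar{J}^{N,1,I}(\hat{\alpha}^{N,1,I},\dots)-(c_1+c_3)\epsilon_N,
\]
which is \eqref{eq:apNash}.

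The main obstacle is the quantitative coupling step in the deviation case: one must control both $|U^i-\tilde U^i|$ and $W_2(\bar\nu^N,\mathcal{L}^1(X))$ in the presence of a possibly large, $\mathcal{F}_t$-measurable (in particular Markov-chain dependent) deviation $\beta^1$ that enters the coefficients through the empirical measure. The a priori bound on $\mathbb{E}\int_0^T|\beta^1|^2\,dt$ via the reduction argument is what makes the $N^{-1}$ contribution harmless, and the conditional propagation of chaos of Lemma~\ref{lem:propchaoscondition} is essential to disentangle the regime-switching common noise from the idiosyncratic Brownian noises.
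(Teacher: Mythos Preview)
Your proposal is correct and follows essentially the same route as the paper: reduce to $i=1$ by symmetry, use conditional propagation of chaos (Lemma~\ref{lem:propchaoscondition}) to get $|\bar J^{N,1,I}(\hat\alpha^{N,\cdot,I})-\hat J|=O(\epsilon_N)$, restrict to deviations with bounded $L^2$-norm via a coercivity argument, couple the deviated $N$-player system to the frozen-law system, and conclude via the optimality of $\hat\alpha$ for the limiting control problem.

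One point to watch: in the paper's statement, $\hat\alpha_t^{N,j,I}$ for $j\neq 1$ is the \emph{process} $\hat\alpha(t,X_t^j,\mathcal L^1(X_t),u(t,X_t^j,I_{t-}),I_{t-})$ built from the equilibrium states $X^j$ of \eqref{eq:Nplayersdeal}, i.e.\ an open-loop control; so when player~1 deviates, the other players continue to play those fixed processes. Your system $(U^i)$ instead plugs the feedback $\hat\alpha(t,U_t^i,\ldots)$ evaluated along the perturbed trajectory, which is a closed-loop reading. The paper handles this by first comparing $X^{1,j}$ (the states under deviation with the open-loop $\hat\alpha^{N,j,I}$) to $X^j$ via a direct Gronwall argument (yielding the $c/N$ in \eqref{eq:U-Xest}--\eqref{eq:Ui-Xiest}), and only afterwards to $\bar X^j$. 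Your estimates still go through because the two interpretations differ by another $O(\epsilon_N)$ term, but to match the theorem as stated you should take the open-loop reading.
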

\begin{proof}  
	By symmetry, it suffices to prove \eqref{eq:apNash} for $i=1$. Consider a progressively measurable process $\beta^1 = (\beta^1_t)_{0 \leq t \leq T}$ satisfying $\mathbb{E} \int_{0}^T | \beta_{t}^1 |^2 dt < \infty$. Let $(X^{1,i})_{i \in \{1, \dots, N\}}$ be a set of player states satisfying \eqref{eq:Uisde}, where the first player uses the strategy $\beta^1$, and the remaining players use the strategies \eqref{eq:alphaNi}, i.e., $\beta^i_t = \hat{\alpha}_{t}^{N,i,I}$ for $i \in \{2, \dots, N\}$ and $t \in [0, T]$. Denote the empirical measure of these players by $\mu^{1,N}_{t} = \frac{1}{N} \sum_{j=1}^N \delta_{X_{t}^{1,j}}$. Our goal is to compare $\bar{J}^{N,1,I}(\beta^1, \hat{\alpha}_{t}^{N,2,I}, \dots, \hat{\alpha}_{t}^{N,N,I})$ to $\bar{J}^{N,1,I}(\hat{\alpha}_{t}^{N,1,I}, \dots, \hat{\alpha}_{t}^{N,N,I})$. Similar to \eqref{eq:Xiito} - \eqref{eq:albound}, using the boundedness of $b_0$, $b_1$, and $b_2$, along with Gronwall's inequality and Burkholder-Davis-Gundy inequality, we obtain the following estimates:
	\begin{equation}
		\label{eq:U1est}
		\mathbb{E}\bigg[\sup_{0 \leq t \leq T} | X_t^{1,1} |^2\bigg] \leq c \biggl( 1 + \mathbb{E} \int_{0}^T | \beta^1_{t} |^2 dt \biggr),
	\end{equation}
	and
	\begin{equation}
		\label{eq:Uibd}
		\mathbb{E}\bigg[\sup_{0 \leq t \leq T} | X_t^{1,i} |^2\bigg] \leq c, \quad \text{for } 2 \leq i \leq N.
	\end{equation}
	Here and in the following,	$c > 0$ is a generic constant that may vary from line to line. Summing these inequalities, we derive:
	\begin{equation}
		\label{eq:Uest}
		\frac{1}{N}\sum_{j=1}^N \mathbb{E}\bigg[ \sup_{0 \leq t \leq T} | X_t^{1,j} |^2\bigg] \leq c \biggl( 1 + \frac{1}{N} \mathbb{E} \int_{0}^T | \beta^1_{t} |^2 dt \biggr).
	\end{equation}
	Define
	$\hat{\alpha}_{t}^{i,I} = \hat{\alpha}\bigl(t, \bar{X}_{t}^i, \mathcal{L}^1(X_{t}), u(t, \bar{X}^i_t, I_{t-}), I_{t-} \bigr)$, $t \in [0, T]$, $i \in \{1, \dots, N\}$,
	where $\bar{X}^i$ is defined in \eqref{eq:barX}. 
	By Lemma~\ref{lem:propchaoscondition}, we have:
	\begin{equation}
		\label{eq:JMW1}
		\max_{1 \leq i \leq N} \mathbb{E} \bigl[ \sup_{0 \leq t \leq T} | X_{t}^i - \bar{X}_{t}^i |^2 \bigr] \leq c \epsilon^2_N,
	\end{equation}
	and
	\begin{equation}
		\label{eq:JMW2}
		\sup_{0 \leq t \leq T} \mathbb{E} \bigl[ W_{2}^2 (\mu_{t}^N, \mathcal{L}^1(X_{t})) \bigr] \leq c \epsilon^2_N.
	\end{equation}
	We first compare $\hat{J}$ to $\bar{J}^{N,i,I}(\hat{\alpha}_{t}^{N,1,I}, \dots, \hat{\alpha}_{t}^{N,N,I})$. By \eqref{eq:Jhat}-\eqref{eq:Jbar}, we have:
	\begin{multline*}
		\bigl|\hat{J} - \bar{J}^{N,i,I}(\hat{\alpha}_{t}^{N,1,I}, \dots, \hat{\alpha}_{t}^{N,N,I})\bigr|
		= \bigg|\mathbb{E}\bigg[g(\bar{X}_T^i, \mathcal{L}^1(X_T), I_T) + \int_0^T f\bigl(t, \bar{X}^i_t, \mathcal{L}^1(X_{t}), \hat{\alpha}_{t}^{i,I}, I_{t-}\bigr) dt \\
		- g(X^i_T, \mu^N_T, I_T) - \int_0^T f\bigl(t, X^i_t, \mu^N_t, \hat{\alpha}_{t}^{N,i,I}, I_{t-}\bigr) dt \bigg] \bigg|.
	\end{multline*}
	Using \ref{hyp:A10} and the Cauchy-Schwarz inequality, we obtain for each $i \in \{1, \dots, N\}$:
	\begin{align*}
		\bigl|&\hat{J} - \bar{J}^{N,i,I}(\hat{\alpha}_{t}^{N,1,I}, \dots, \hat{\alpha}_{t}^{N,N,I})\bigr| \\
		&\leq c \, \mathbb{E}\bigg[ \biggl( 1 + | \bar{X}_{T}^i |^2 + | X_{T}^i |^2 + \frac{1}{N} \sum_{j=1}^N | X_{T}^j |^2 \biggr) \biggr]^{1/2} \mathbb{E} \left[ | \bar{X}_T^i - X^i_T |^2 + W_2^2 (\mathcal{L}^1(X_{T}), \mu^N_T) \right]^{1/2} \\
		&\quad + c \int_0^T \biggl\{ \mathbb{E} \biggl[\biggl( 1 + | \bar{X}_{t}^i |^2 + | X_{t}^i |^2 + | \hat{\alpha}_{t}^{i,I} |^2 + | \hat{\alpha}_{t}^{N,i,I} |^2 + \frac{1}{N} \sum_{j=1}^N | X_{t}^j |^2 \biggr) \biggr]^{1/2} \\
		&\quad \cdot \mathbb{E} \bigl[ |\bar{X}_t^i - X^i_t |^2 + | \hat{\alpha}_{t}^{i,I} - \hat{\alpha}_{t}^{N,i,I} |^2 + W_2^2(\mathcal{L}^1(X_{t}), \mu^N_t) \bigr]^{1/2} \biggr\} dt,
	\end{align*}
By \eqref{eq:Xbound}--\eqref{eq:albound} we deduce:
	\begin{multline*}
			\bigl|\hat{J} - \bar{J}^{N,i,I}(\hat{\alpha}_{t}^{N,1,I}, \dots, \hat{\alpha}_{t}^{N,N,I})\bigr|
			\leq c \, \mathbb{E} \bigl[ | \bar{X}_T^i - X^i_T |^2 + W_2^2(\mathcal{L}^1(X_{T}), \mu^N_T) \bigr]^{1/2} \\
			\quad + c \biggl( \int_0^T \mathbb{E} \bigl[ |\bar{X}_t^i - X^i_t |^2 + | \hat{\alpha}_{t}^{i,I} - \hat{\alpha}_{t}^{N,i,I} |^2 + W_2^2(\mathcal{L}^1(X_{t}), \mu^N_t) \bigr] dt \biggr)^{1/2}.
	\end{multline*}
	By Lemma \ref{Lem:UniOpt2} and the Lipschitz property of $u$ in \eqref{eq:uproperty}, we observe that:
	\begin{align*}
		\mathbb{E}| \hat{\alpha}_{t}^{i,I} - \hat{\alpha}_{t}^{N,i,I} |
		&= \mathbb{E}\bigl|\hat{\alpha}\bigl(t, \bar{X}_t^i, \mathcal{L}^1(X_{t}), u(t, \bar{X}_t^i, I_{t-}), I_{t-}\bigr) - \hat{\alpha}\bigl(t, X^i_t, \mathcal{L}^1(X_{t}), u(t, X^i_t, I_{t-}), I_{t-}\bigr)\bigr| \\
		&\leq c \, \mathbb{E}|\bar{X}_t^i - X^i_t|.
	\end{align*}
	Using \eqref{eq:JMW1} and \eqref{eq:JMW2}, we conclude that for any $1\leq i \leq N$, 
	\begin{equation}
		\label{eq:apNash1}
		\bar{J}^{N,i,I}(\hat{\al}_{t}^{N,1,I},\dots,\hat{\al}_{t}^{N,N,I}) = \hat{J} + O( \epsilon_N).
	\end{equation}
	This result indicates that $\bar{J}^{N,i,I}$, when all players use $(\hat{\alpha}_{t}^{N,j,I})_{j = 1, \dots, N}$, approximates the optimal cost $\hat{J}$ of the mean-field problem with an $O(\epsilon_N)$ error. To prove the inequality \eqref{eq:apNash} for $i=1$, it now suffices to compare $\bar{J}^{N,1,I}(\beta^1, \hat{\alpha}_{t}^{N,2,I}, \dots, \hat{\alpha}_{t}^{N,N,I})$ to $\hat{J}$. To this end, we analyze the differences between the processes $X^{1,j}$ (defined in the first paragraph of this proof) and $X^j$ defined in \eqref{eq:Nplayersdeal}. Using the argument similar to \eqref{eq:U1est}--\eqref{eq:Uest}, along with the definitions of $X^{1,j}$ and $X^j$, we obtain the following bounds for any $t \in [0, T]$:
	\begin{align}
		&\mathbb{E}\bigg[\sup_{0 \leq s \leq t} | X_s^{1,1} - X_{s}^1 |^2 \bigg]
		\leq \frac{c}{N} \int_0^t \sum_{j=1}^N \mathbb{E} \bigg[\sup_{0 \leq r \leq s} | X_r^{1,j} - X_{r}^j |^2 \bigg] ds + c \, \mathbb{E} \int_{0}^T | \beta_{s}^1 - \hat{\alpha}_{s}^{N,1,I} |^2 ds,\nonumber\\
		&\mathbb{E}\bigg[\sup_{0 \leq s \leq t} | X_s^{1,i} - X_{s}^i |^2 \bigg]
		\leq \frac{c}{N} \int_0^t \sum_{j=1}^N \mathbb{E} \bigg[\sup_{0 \leq r \leq s} | X_r^{1,j} - X_{r}^j |^2 \bigg] ds, \quad 2 \leq i \leq N. \label{eq:Xi-Xi}
	\end{align}
	Taking the average of these inequalities and applying Gronwall's inequality, we derive:
	\begin{equation}
		\label{eq:U-Xest}
		\frac{1}{N} \sum_{j=1}^N \mathbb{E} \bigg[\sup_{0 \leq t \leq T} | X_t^{1,j} - X_{t}^j |^2 \bigg]
		\leq \frac{c}{N} \mathbb{E} \int_{0}^T | \beta_{t}^1 - \hat{\alpha}_{t}^{N,1,I} |^2 dt.
	\end{equation}
	Substituting \eqref{eq:U-Xest} back into \eqref{eq:Xi-Xi}, we further obtain:
	\begin{equation}
		\label{eq:Ui-Xiest}
		\sup_{0 \leq t \leq T} \mathbb{E} \bigl[ | X_t^{1,i} - X_{t}^i |^2 \bigr]
		\leq \frac{c}{N} \mathbb{E} \int_{0}^T | \beta_{t}^{1} - \hat{\alpha}_{t}^{N,1,I} |^2 dt, \quad 2 \leq i \leq N.
	\end{equation}
	In fact, similar to \cite[Theorem 4.2]{carmona-delarue-siam} and the statement of \cite[Cor 4.3]{carmona-delarue-siam}, it suffices to consider strategies $\beta^1_t$ satisfying $\mathbb{E} \int_{0}^T | \beta_{t}^1 |^2 dt \leq A$ for some constant $A > 0$. By \eqref{eq:Xbound}, \eqref{eq:JMW1}, and \eqref{eq:Ui-Xiest}, we have for $i \geq 2$,
	\begin{equation*}
		\sup_{0 \leq t \leq T} \mathbb{E} \bigl[ | X_t^{1,i} - \bar{X}_{t}^i |^2 \bigr]
		\leq \sup_{0 \leq t \leq T} \mathbb{E} \bigl[ | X_t^{1,i} - X_{t}^i |^2 \bigr] + \sup_{0 \leq t \leq T} \mathbb{E} \bigl[ | X_{t}^i - \bar{X}_{t}^i |^2 \bigr] \leq c_A \epsilon^2_N,
	\end{equation*}
	where the last inequality uses the fact that $\mathcal{O}(N^{-1} + \epsilon^2_N) = \mathcal{O}(\epsilon^2_N)$. Here and in the following, $c_A>0$ is a constant depending on $A$ that may vary from line to line. Consequently
	\begin{equation}
		\label{eq:JMW4}
		\frac{1}{N-1} \sum_{j=2}^N \mathbb{E} \bigl[ |X_t^{1,j} - \bar{X}^j_t |^2 \bigr] \leq c_A \epsilon^2_N.
	\end{equation}
	By the triangle inequality for the Wasserstein distance,
\begin{multline}\label{eq:Wnucondition}	
	\mathbb{E}\bigl[W_{2}^2(\mu^{1,N}_{t},\mathcal{L}^1(X_{t})) \bigr]
	\le c\bigg\{ \mathbb{E}\bigg[W_{2}^2\bigg(\frac1{N}\sum_{j=1}^N\delta_{X_t^{1,j}},\frac1{N-1}\sum_{j=2}^N\delta_{X_t^{1,j}}\bigg) \bigg]
	\\ 
	\hspace{30pt}+\frac1{N-1}\sum_{j=2}^N
	\mathbb{E}\bigl[|X_t^{1,j}-\bar{X}^j_t|^2\bigr] +\mathbb{E}\bigg[W_{2}^2\bigg(\frac1{N-1}\sum_{j=2}^N\delta_{\bar{X}^j_t},\mathcal{L}^1(X_{t})\bigg) \bigg] \biggr\}.
\end{multline}
		The first term is bounded by
		\begin{align*}
	\mathbb{E} \bigg[ W_{2}^2 \bigg( \frac{1}{N} \sum_{j=1}^N \delta_{X_t^{1,j}}, \frac{1}{N-1} \sum_{j=2}^N \delta_{X_t^{1,j}} \bigg) \bigg]
	&\leq \frac{1}{N(N-1)} \sum_{j=2}^N \mathbb{E} \bigl[ |X_t^{1,1} - X_t^{1,j}|^2 \bigr] \\
	&\leq \frac{2}{N} \left( \mathbb{E} \bigl[ |X_t^{1,1}|^2 \bigr] + \frac{\sum_{j=2}^N \mathbb{E} \bigl[ |X_t^{1,j}|^2 \bigr]}{N-1} \right) \leq \frac{c_A}{N},
	\end{align*}
	where the last inequality follows from \eqref{eq:U1est} and \eqref{eq:Uest}. Using \eqref{eq:JMW4} and Lemma \ref{lem:propchaos} to control the second and third terms in \eqref{eq:Wnucondition}, we conclude that
	\begin{equation}
		\label{eq:W2est4nu}
		\mathbb{E} \bigl[ W_{2}^2(\mu^{1,N}_{t}, \mathcal{L}^1(X_{t})) \bigr] \leq c_A \epsilon^2_N.
	\end{equation}
	Next, define $(\bar{U}^1_{t})_{0 \leq t \leq T}$ as the solution to the SDE
	\begin{equation*}
		d \bar{U}^1_{t} = b(t, \bar{U}_{t}^1, \mathcal{L}^1(X_{t}), \beta_{t}^1, I_{t-}) dt + \sigma(t, \bar{U}_t^1, \mathcal{L}^1(X_{t}), I_{t-}) dW_{t}^1, \quad 0 \leq t \leq T, \; \bar{U}^1_0 = x_0,
	\end{equation*}
	and the mean-field cost with $\beta^1$:
	\begin{equation*}
		J(\beta^1) = \mathbb{E} \biggl[ g(\bar{U}_{T}^1, \mathcal{L}^1(X_{T}), I_T) + \int_{0}^T f\bigl(t, \bar{U}_{t}^1, \mathcal{L}^1(X_{t}), \beta_{t}^1, I_{t-}\bigr) dt \biggr].
	\end{equation*}
	From the definition of $X^{1,1}$, we have
	\begin{equation*}
		X_t^{1,1} - \bar{U}^1_{t} = \int_0^t [b_0(s, \mu^{1,N}_{s}, I_{s-}) - b_0(s, \mathcal{L}^1(X_s), I_{s-})] ds + \int_0^t b_1(s, I_{s-}) [X^{1,1}_s - \bar{U}^1_s] ds.
	\end{equation*}
	Using the Lipschitz properties of $b$ and $\si$, \eqref{eq:W2est4nu} along with Gronwall's inequality provides
	\begin{equation}
		\label{eq:U-Ubarest}
		\sup_{0 \leq t \leq T} \mathbb{E} \bigl[ | X_t^{1,1} - \bar{U}_{t}^1 |^2 \bigr] \leq c_A \epsilon^2_N.
	\end{equation}
	With the same argument leading to \eqref{eq:apNash1}, using \eqref{eq:U1est}--\eqref{eq:Uest}, \eqref{eq:W2est4nu}-\eqref{eq:U-Ubarest}, we derive
	\begin{equation*}
		\bar{J}^{N,1,I}(\beta^1, \hat{\alpha}_{t}^{N,2,I}, \dots, \hat{\alpha}_{t}^{N,N,I}) \geq J(\beta^1) - c_A \epsilon_N.
	\end{equation*}
	Since $\hat{J} \leq J(\beta^1)$, by the optimality of $(\hat{\alpha}_{t}^{N,i,I})_{i \geq 1}$ we have
	\begin{equation}
		\label{eq:J1Jhat}
		\bar{J}^{N,1,I}(\beta^1, \hat{\alpha}_{t}^{N,2,I}, \dots, \hat{\alpha}_{t}^{N,N,I}) \geq \hat{J} - c_A \epsilon_N.
	\end{equation}
	Combining \eqref{eq:J1Jhat} with \eqref{eq:apNash1}, we conclude
	\begin{equation}
		\bar{J}^{N,1,I}(\beta^1, \hat{\alpha}_{t}^{N,2,I}, \dots, \hat{\alpha}_{t}^{N,N,I}) \geq \bar{J}^{N,1,I}(\hat{\alpha}_{t}^{N,1,I}, \dots, \hat{\alpha}_{t}^{N,N,I}) - c_A \epsilon_N.
	\end{equation}
	which is the desired result. Therefore, we have shown that $(\hat{\al}_{t}^{N,1,I},\dots,\hat{\al}_{t}^{N,N,I})$ is an $\epsilon$-Nash equilibrium for the $N$-player game.
\end{proof}
\section{Conclusion}
In this paper, we studied multi-player finite horizon stochastic differential games with regime-switching dynamics. We proved the existence and uniqueness of Nash equilibria under mild conditions, including settings where action spaces are not necessarily bounded, and established the well-posedness of the associated coupled HJB-PDE system, as detailed in Theorem~\ref{Thm:3.1}. These results were extended to symmetric multi-player games with mean-field interactions, where player dynamics are simplified through aggregate state distributions.
Then we explored the regime-switching mean field game problem as the population grows to infinity. Using a probabilistic approach, we derived the stochastic maximum principle and established the existence and uniqueness of solutions to the associated McKean–Vlasov FBSDE with regime switching. Finally, we established propagation of chaos and bridged regime-switching finite-player games and their mean field counterparts, showing that strategies derived from the mean field game provide approximate equilibria for finite-player games with explicit convergence rates.

Overall, our work contributes to the understanding of stochastic differential games with regime-switching dynamics, providing rigorous results on the existence, uniqueness, and approximation of Nash equilibria in both finite and large population settings. These findings deepen the theoretical foundations of stochastic differential games and offer insights into the behavior of strategic agents in complex environments.

\bigskip

\bibliographystyle{plain}
\bibliography{ref}

\begin{thebibliography}{10}

\bibitem{lqg-gomez-duncan}
Julian Barreiro-Gomez, Tyrone~E Duncan, and Hamidou Tembine.
\newblock Linear--quadratic mean-field-type games: Jump--diffusion process with
  regime switching.
\newblock {\em IEEE Transactions on Automatic Control}, 64(10), 2019.

\bibitem{mfc-regime}
Erhan Bayraktar, Alekos Cecchin, and Prakash Chakraborty.
\newblock Mean field control and finite agent approximation for
  regime-switching jump diffusions.
\newblock {\em Applied Mathematics \& Optimization}, 88(2), 2023.

\bibitem{bensoussan2020mfg-regime-jump}
Alain Bensoussan, Boualem Djehiche, Hamidou Tembine, and Sheung Chi~Phillip
  Yam.
\newblock Mean-field-type games with jump and regime switching.
\newblock {\em Dynamic Games and Applications}, 10:19--57, 2020.

\bibitem{bensoussan2000stochastic}
Alain Bensoussan and Jens Frehse.
\newblock Stochastic games for n players.
\newblock {\em Journal of optimization theory and applications}, 105, 2000.

\bibitem{bensoussan2013mean}
Alain Bensoussan, Jens Frehse, Phillip Yam, et~al.
\newblock {\em Mean field games and mean field type control theory}, volume
  101.
\newblock Springer, 2013.

\bibitem{bensoussan2016linear}
Alain Bensoussan, KCJ Sung, Sheung Chi~Phillip Yam, and Siu-Pang Yung.
\newblock Linear-quadratic mean field games.
\newblock {\em Journal of Optimization Theory and Applications}, 169:496--529,
  2016.

\bibitem{bierbrauer2004modeling}
Michael Bierbrauer, Stefan Tr{\"u}ck, and Rafa{\l} Weron.
\newblock Modeling electricity prices with regime switching models.
\newblock In {\em Computational Science-ICCS 2004: 4th International
  Conference, Krak{\'o}w, Poland, June 6-9, 2004, Proceedings, Part IV 4},
  pages 859--867. Springer, 2004.

\bibitem{borkar1992stochastic}
VS~Borkar and MK~Ghosh.
\newblock Stochastic differential games: occupation measure based approach.
\newblock {\em Journal of optimization theory and applications}, 73, 1992.

\bibitem{cardaliaguet2010notes}
Pierre Cardaliaguet.
\newblock Notes on mean field games.
\newblock Technical report, Technical report, 2010.

\bibitem{carmona-applications}
Rene Carmona.
\newblock Applications of mean field games in financial engineering and
  economic theory.
\newblock {\em arXiv preprint arXiv:2012.05237}, 2020.

\bibitem{carmona-delarue-siam}
Ren{\'e} Carmona and Fran{\c{c}}ois Delarue.
\newblock Probabilistic analysis of mean-field games.
\newblock {\em SIAM Journal on Control and Optimization}, 51(4), 2013.

\bibitem{ref2}
Ren{\'e} Carmona and Fran{\c{c}}ois Delarue.
\newblock {\em Probabilistic Theory of Mean Field Games with Applications I}.
\newblock Springer, 2018.

\bibitem{ref3}
Ren{\'e} Carmona, Fran{\c{c}}ois Delarue, et~al.
\newblock {\em Probabilistic theory of mean field games with applications II}.
\newblock Springer, 2018.

\bibitem{carmona2016mean}
Ren{\'e} Carmona, Fran{\c{c}}ois Delarue, and Daniel Lacker.
\newblock Mean field games with common noise.
\newblock {\em The Annals of Probability}, 44(6):3740--3803, 2016.

\bibitem{delarue-holder}
Fran{\c{c}}ois Delarue.
\newblock Estimates of the solutions of a system of quasi-linear pdes. a
  probabilistic scheme.
\newblock In {\em S{\'e}minaire de Probabilit{\'e}s XXXVII}. Springer, 2003.

\bibitem{djehiche2016mean}
Boualem Djehiche, Alain Tcheukam, and Hamidou Tembine.
\newblock Mean-field-type games in engineering.
\newblock {\em arXiv preprint arXiv:1605.03281}, 2016.

\bibitem{elias2014}
RS~Elias, MIM Wahab, and L~Fang.
\newblock A comparison of regime-switching temperature modeling approaches for
  applications in weather derivatives.
\newblock {\em European Journal of Operational Research}, 232(3):549--560,
  2014.

\bibitem{reg-switching-ex1}
Robert~J Elliott and Tak~Kuen Siu.
\newblock On risk minimizing portfolios under a markovian regime-switching
  black-scholes economy.
\newblock {\em Annals of Operations Research}, 176, 2010.

\bibitem{elliott2011stochastic}
Robert~J Elliott and Tak~Kuen Siu.
\newblock A stochastic differential game for optimal investment of an insurer
  with regime switching.
\newblock {\em Quantitative Finance}, 11(3), 2011.

\bibitem{friedman-1972}
Avner Friedman.
\newblock Stochastic differential games.
\newblock {\em Journal of differential equations}, 11(1), 1972.

\bibitem{Huang-Malhame-Caines}
Minyi Huang, Roland~P Malham{\'e}, and Peter~E Caines.
\newblock Large population stochastic dynamic games: closed-loop mckean-vlasov
  systems and the nash certainty equivalence principle.
\newblock 2006.

\bibitem{song-lqg}
Jiamin Jian, Peiyao Lai, Qingshuo Song, and Jiaxuan Ye.
\newblock The convergence rate of the equilibrium measure for the lqg mean
  field game with a common noise.
\newblock {\em arXiv preprint arXiv:2106.04762}, 2021.

\bibitem{jian2024convergence}
Jiamin Jian, Qingshuo Song, and Jiaxuan Ye.
\newblock Convergence rate of lqg mean field games with common noise.
\newblock {\em Mathematical Methods of Operations Research}, 99(3):233--270,
  2024.

\bibitem{ref8}
Olga~Aleksandrovna Ladyzhenskaia, Vsevolod~Alekseevich Solonnikov, and Nina~N
  Ural'tseva.
\newblock {\em Linear and quasi-linear equations of parabolic type}, volume~23.
\newblock American Mathematical Soc., 1968.

\bibitem{Lasry-Lions}
Jean-Michel Lasry and Pierre-Louis Lions.
\newblock Mean field games.
\newblock {\em Japanese journal of mathematics}, 2(1), 2007.

\bibitem{li2017modeling}
Yan Li, Lirong Cui, and Cong Lin.
\newblock Modeling and analysis for multi-state systems with discrete-time
  markov regime-switching.
\newblock {\em Reliability Engineering \& System Safety}, 166:41--49, 2017.

\bibitem{lim1998stochastic}
Tae-Jin Lim.
\newblock A stochastic regime switching model for the failure process of a
  repairable system.
\newblock {\em Reliability Engineering \& System Safety}, 59(2):225--238, 1998.

\bibitem{lv-xiong-2024linear}
Siyu Lv, Zhen Wu, and Jie Xiong.
\newblock Linear quadratic nonzero-sum mean-field stochastic differential games
  with regime switching.
\newblock {\em Applied Mathematics \& Optimization}, 90(2):44, 2024.

\bibitem{xiong-impulse-automatica}
Siyu Lv and Jie Xiong.
\newblock Nonzero-sum impulse games with regime switching.
\newblock {\em Automatica}, 145, 2022.

\bibitem{ma1994solving}
Jin Ma, Philip Protter, and Jiongmin Yong.
\newblock Solving forward-backward stochastic differential equations
  explicitly—a four step scheme.
\newblock {\em Probability theory and related fields}, 98(3):339--359, 1994.

\bibitem{ref11}
Son~L Nguyen, George Yin, and Tuan~A Hoang.
\newblock On laws of large numbers for systems with mean-field interactions and
  markovian switching.
\newblock {\em Stochastic Processes and their Applications}, 130(1), 2020.

\bibitem{nguyen2021general-smp-regime}
Son~L Nguyen, George Yin, and Dung~T Nguyen.
\newblock A general stochastic maximum principle for mean-field controls with
  regime switching.
\newblock {\em Applied Mathematics \& Optimization}, pages 1--40, 2021.

\bibitem{pham2009continuous}
Huy{\^e}n Pham.
\newblock {\em Continuous-time stochastic control and optimization with
  financial applications}, volume~61.
\newblock Springer Science \& Business Media, 2009.

\bibitem{Protter}
Philip Protter.
\newblock {\em Stochastic Integration and Differential Equation}.
\newblock Springer-Verlag, Berlin, Heidelberg, second edition, 1992.

\bibitem{rockafellar2009variational}
R~Tyrrell Rockafellar and Roger J-B Wets.
\newblock {\em Variational analysis}, volume 317.
\newblock Springer Science \& Business Media, 2009.

\bibitem{rolon2024markovian}
Esteban~J Rol{\'o}n~Guti{\'e}rrez, Son~Luu Nguyen, and George Yin.
\newblock Markovian-switching systems: Backward and forward-backward stochastic
  differential equations, mean-field interactions, and nonzero-sum differential
  games.
\newblock {\em Applied Mathematics \& Optimization}, 89(2):33, 2024.

\bibitem{savku-weber-22}
E~Savku and G-W Weber.
\newblock Stochastic differential games for optimal investment problems in a
  markov regime-switching jump-diffusion market.
\newblock {\em Annals of Operations Research}, 312(2), 2022.

\bibitem{shao2024conditional}
Jinghai Shao, Taoran Tian, and Shen Wang.
\newblock Conditional mckean-vlasov sdes with jumps and markovian
  regime-switching: Wellposedness, propagation of chaos, averaging principle.
\newblock {\em Journal of Mathematical Analysis and Applications},
  534(2):128080, 2024.

\bibitem{shao2022propagation}
Jinghai Shao and Dong Wei.
\newblock Propagation of chaos and conditional mckean-vlasov sdes with
  regime-switching.
\newblock {\em Frontiers of Mathematics in China}, 17(4):731--746, 2022.

\bibitem{yong2012stochastic}
Jiongmin Yong and Xun~Yu Zhou.
\newblock {\em Stochastic controls: Hamiltonian systems and HJB equations},
  volume~43.
\newblock Springer Science \& Business Media, 2012.

\bibitem{reg-switching-ex3}
Qing Zhang.
\newblock Stock trading: An optimal selling rule.
\newblock {\em SIAM Journal on Control and Optimization}, 40(1), 2001.

\bibitem{reg-switching-ex4}
Xin Zhang, Zhongyang Sun, and Jie Xiong.
\newblock A general stochastic maximum principle for a markov regime switching
  jump-diffusion model of mean-field type.
\newblock {\em SIAM Journal on Control and Optimization}, 56(4), 2018.

\end{thebibliography}

\end{sloppypar}
\end{document}